\theoremstyle{plain}
\newtheorem{lemma}{Lemma}[section]
\newtheorem{lema}[lemma]{Lemma}
\newtheorem{cor}[lemma]{Corollary}
\newtheorem{prop}[lemma]{Proposition}
\newtheorem{teo}[lemma]{Theorem}
\newtheorem{maintheorem}{Theorem}
\newcommand{\interior}{\operatorname{int}}
\theoremstyle{definition}
\newtheorem{remark}[lemma]{Remark}
\newtheorem{defi}[lemma]{Definition}
\title{Invariant Measures of Non-Uniformly Expanding Maps with Higher Order Critical Set} 
\author{Ricardo Chical\'e and Vanderlei Horita
\thanks{Work partially supported by FAPESP (2019/10269-3).}}
\date{\today}
\begin{document}

\maketitle

\let\thefootnote\relax\footnote{2020 {\it Mathematics Subject Classification}.
Primary 28D05, 37A05.}
\let\thefootnote\relax\footnote{{\it Key words}.
Absolutely continuous invariant probability, Nonuniformly hyperbolic systems, Viana maps, higher order critical point.}

\begin{abstract}
We prove existence and uniqueness of absolutely continuous invariant measures for generalizations of Viana maps admitting a higher order critical point introduced in \cite{HMS24}.  
As a consequence of our approach, we obtain super-polynomial decay of correlations.
\end{abstract}

\section{Introduction}
In the study of non-uniformly expanding systems, Viana conjectured that a smooth map $f$ with only non-zero Lyapunov exponents at
Lebesgue almost every point has a physical measure, see \cite{Vi98}.
Among the motivations, let us mention the seminal work of Jakobson \cite{Ja81}, where he constructed absolutely continuous invariant probability (a.c.i.p. for short) for many quadratic maps of the interval having positive Lyapunov exponent.
In \cite{Vi97}, Viana introduced 2-dimensional skew-product maps coupling a quadratic map with a uniformly expanding circle map  presenting two positive Lyapunov exponents, currently known as Viana maps.
Alves in \cite{Al00} shows that Viana maps admit finitely many a.c.i.p.'s. 
In fact, Alves and Viana proved in \cite{AV02} the uniqueness of the measure. 
Alves, Bonatti, and Viana \cite{ABV00} proved existence of a finite numbem of a.c.p.i.'s for non-uniformly expanding local diffeomorphisms. This paper also shows the same for maps with singularities (i.e., maps that fails to be local diffeomorphism for some subset) having a condition of \emph{slow recurrence} of the orbits near the singular set.  


Horita, Muniz, and Sester \cite{HMS24} extend the result for Viana maps replacing the quadratic map $h(x) = a_0 - x^2 $ with a map $h_D$ with a non-flat critical point of any order. 
More precisely, let $\alpha> 0$ and $d\geq 16$ be real numbers and let $D\geq 2$ be a positive integer. 
Consider a $C^D$-map $\varphi_{\alpha,D}: \mathbb{S}^1\times \mathbb{R} \to \mathbb{S}^1\times\mathbb{R}$ of the following form 
$$
\varphi_{\alpha,D}(\theta,x) = (g(\theta),\alpha \sin(2\pi\theta)+h_D(x)),
$$
where $g \colon \mathbb{S}^1\rightarrow \mathbb{S}^1$ is the uniformly expanding map of the circle $\mathbb{S}^1 = \mathbb{R}/\mathbb{Z}$, $g(\theta) = d\theta \mod 1$, and $h_D$ is a map with an order $D$ critical point:
for the case when $D$ is even, $ D = 2q$, let us recall that $a_0$ in $(1,2)$ in Viana maps is taken for $x = 0$  to be a pre-periodic point of $h$, and since $a_0 < 2$ there exist a compact interval $I$ in $(-2,2)$ invariant by $h$. 
Consider two intervals $I', I'' \subset I$ such that $I''$ is a proper subinterval of $I' = (-1,1)$. 
We define $h_{2q} \colon I \to  I$ by
\begin{equation}
h_{2q}(x) = \left\{\begin{array}{lcl}
  a_0 - x^2 & \mbox{if} & x \in I\setminus I'\\
  a_0 - Ax^{2q} & \mbox{if} & x \in I'' 
\end{array}
\right. ,
\label{hd-even}
\end{equation}
where $A$ is a constant chosen such that the absolute value of the derivative of $h_{2q}$ at the extreme points of $I''$ are equal to $7/4$. 
Additionally, we require that in each component of $I'\setminus I''$ the first and second derivatives of $h_{2q}$ are monotone in $I\setminus I'$. 
So, $\tilde{x} = 0$ is the unique critical point of $h_{2q}$ and $h_{2q}(\tilde{x})$ is the fixed point of $h_{2q}$.
Moreover, $a_0$ is chosen in such a way that $h_{2q}^2(0)$ is the fixed point of $h_{2q}$.
\begin{figure}[!ht]
	\centering
	\includegraphics[width=5cm]{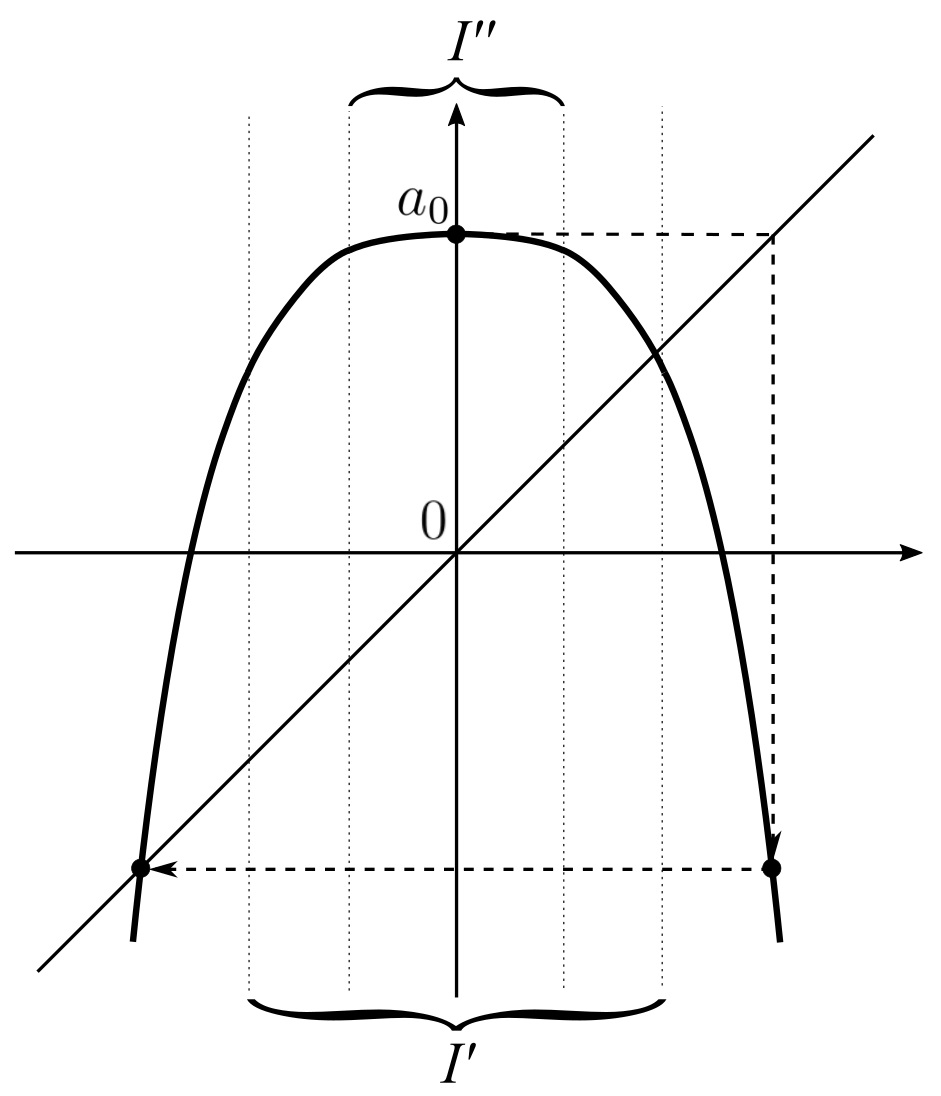}
	\caption{Map $h_D$ with a pre-periodic even critical point}
\end{figure}

For the case when $D$ is odd, $D= 2q+1$, let $I',I''\subset \mathbb{S}^1$ be intervals such that $I''$ is a proper subinterval of $I'$. 
One defines $h_{2q+1} \colon \mathbb{S}^1 \to \mathbb{S}^1$ by
\begin{equation}
	h_{2q+1}(x) = \left\{
	\begin{array}{lcl}
		2x \mod 1  & \mbox{if} & x\in \mathbb{S}^1\setminus I'\\
		A(x-1/2)^{2q+1} & \mbox{if} & x\in I''     
	\end{array}
	\right. .
	\label{hd-odd}
\end{equation}
Again, here we take $A$ as a constant such that the absolute value of the derivative of $h_{2q+1}$ at the extremal points of $I''$ are equal to $7/4$ and the first and second derivatives of $h_{2q+1}$ are monotone in $I' \setminus I''$. 
As defined, the map $h_{2q+1}$ has a unique critical point $\tilde{x} = 1/2$ of order $2q+1$. 
\begin{figure}[!ht]
	\centering
	\includegraphics[width=6cm]{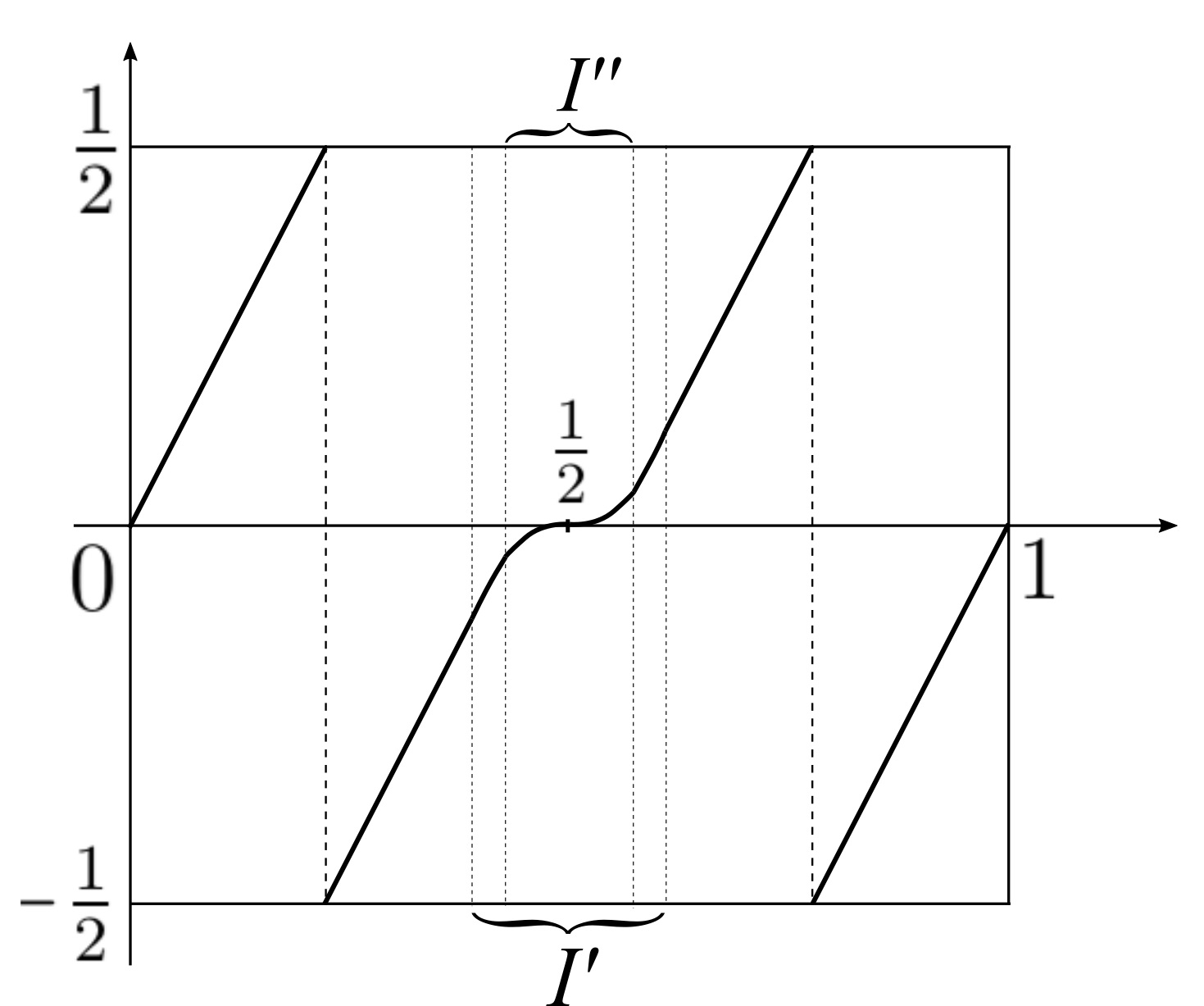}
	\caption{Map $h_D$ with an odd critical point}
\end{figure}

We write $\mathcal{M} = \mathbb{S}^1$ if $D$ is odd or $\mathcal{M} = I$ if $D$ is even.

In this work, we prove the existence and uniqueness of absolutely continuous invariant probabilities (a.c.i.p.'s) for these generalizations of Viana maps. Moreover, based on the work of Alves, Luzzatto, and Pinheiro in \cite{ALP05}, we conclude the decay of correlations for the corresponding dynamics and the Central Limit Theorem. First, we show the existence and finiteness of a.c.i.p.'s.

\begin{maintheorem}
For $d\geq 16$, $D\geq 2$ and $\alpha$ sufficiently small, the map $\varphi_{\alpha,D}$ has a finite invariant ergodic measure $\mu^{*}$ absolutely continuous with respect to the Lebesgue measure on $\mathbb{S}^1\times \mathcal{M}$ in every invariant component of the dynamics. Moreover, the same holds for every map $\varphi$ in a sufficiently small open neighborhood of $\varphi_{\alpha,D}$ in the $C^D(\mathbb{S}^1\times \mathcal{M})$ topology.    
\end{maintheorem}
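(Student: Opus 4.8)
The plan is to bring $\varphi_{\alpha,D}$ within the framework of Alves, Bonatti and Viana \cite{ABV00} for maps with a critical (non-flat) set: one shows that $\varphi_{\alpha,D}$ is a \emph{non-uniformly expanding} local diffeomorphism away from the critical set $\mathcal{C}=\mathbb{S}^1\times\{\tilde x\}$, with \emph{slow recurrence} to $\mathcal{C}$, and then invokes the construction of \cite{ABV00} producing finitely many ergodic a.c.i.p.'s whose basins cover $\measure$-a.e. point. The required non-degeneracy of $\mathcal{C}$ is built into the definition of $h_D$: away from $\tilde x$ the map is a $C^D$ local diffeomorphism, while near $\tilde x$ one has $|h_D'(x)|\asymp|x-\tilde x|^{D-1}$ and $|h_D''(x)|\asymp|x-\tilde x|^{D-2}$, so $\dist((\theta,x),\mathcal{C})^{\,D-1}$ bounds $\|D\varphi_{\alpha,D}(\theta,x)^{-1}\|^{-1}$ from below and the polynomial distortion estimates of \cite{ABV00} hold with exponent depending on $D$. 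The expansion in the $\theta$-direction is automatic since $g'\equiv d\geq 16$.

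The core is the statistical estimate along orbits in the fibers, which I would carry out with the admissible-curve technology of Viana \cite{Vi97} as adapted to order-$D$ critical points in \cite{HMS24}. First I would recall that an admissible curve is a graph $x=\gamma(\theta)$ over $\mathbb{S}^1$ that is $C^2$-close to horizontal, and that for $d\geq 16$ and $\alpha$ small the class of admissible curves is forward invariant (each image decomposes, after pulling back by inverse branches of $g$, into finitely many admissible pieces). On such a curve the fiber dynamics is, up to controlled distortion, the one-parameter family $\theta\mapsto \alpha\sin(2\pi\theta)+h_D(\gamma(\theta))$ of maps of $\mathcal{M}$ of the same qualitative type as $h_D$, and a Benedicks--Carleson/Jakobson-type exclusion of bad parameters — with binding periods and return depths rescaled by the factor $D-1$ coming from the order of the critical point — yields a uniform lower bound
\begin{equation}
\frac1n\sum_{j=0}^{n-1}\log\bigl|\partial_x(\alpha\sin(2\pi\theta)+h_D)\circ\varphi_{\alpha,D}^{\,j}(\theta,x)\bigr|\;\geq\;c_0>0
\label{eq:NUE}
\end{equation}
for $\measure$-a.e. point of the curve and all large $n$, together with the matching slow-recurrence bound: for every $\varepsilon>0$ there is $\delta>0$ with $\limsup_n \frac1n\sum_{j=0}^{n-1}\bigl(-\log\dist_\delta(\varphi_{\alpha,D}^{\,j}(\theta,x),\mathcal{C})\bigr)\le\varepsilon$, where $\dist_\delta$ is the distance truncated at scale $\delta$. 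Foliating $\mathbb{S}^1\times\mathcal{M}$ by horizontal (admissible) curves and applying Fubini transfers \eqref{eq:NUE} and the recurrence bound to $\measure$-a.e. point of $\mathbb{S}^1\times\mathcal{M}$, which are exactly the hypotheses of \cite{ABV00}.

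Given non-uniform expansion and slow recurrence, \cite{ABV00} provides finitely many ergodic a.c.i.p.'s $\mu_1,\dots,\mu_k$ whose basins cover $\measure$-a.e. point of $\mathbb{S}^1\times\mathcal{M}$; restricting to a forward-invariant set of positive Lebesgue measure and passing to the ergodic decomposition of an a.c.i.p. supported there yields the ergodic measure $\mu^{*}$ claimed in each invariant component. For the openness statement I would note that every ingredient above is quantitative and $C^D$-stable: the non-flatness of $\mathcal{C}$, the invariance and curvature control of admissible curves, the parameter-exclusion estimates, and both \eqref{eq:NUE} and the recurrence bound persist — with slightly weaker but still positive/small constants — for any $\varphi$ that is $C^D$-close to $\varphi_{\alpha,D}$, since the construction of \cite{HMS24} uses only finitely many derivatives (up to order $D$) and a fixed, robust amount of hyperbolicity; hence the hypotheses of \cite{ABV00} hold on a whole $C^D$-neighborhood.

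The main obstacle is the fiberwise non-uniform expansion estimate \eqref{eq:NUE}: a critical point of order $D$ makes $h_D'$ vanish to order $D-1$, so each deep return to $\mathcal{C}$ costs up to $(D-1)\log(1/\dist)$ in lost derivative and forces a correspondingly longer binding period, and one must show that the expansion accumulated along the admissible curve between consecutive deep returns still dominates this loss. This is precisely where the hypotheses $d\geq16$ (large base expansion, bounding the frequency and depth of returns) and $\alpha$ small (keeping image curves nearly horizontal and the graph $\alpha\sin(2\pi\theta)$ transverse to the critical line $\{x=\tilde x\}$) enter; it is the technical heart of \cite{HMS24}, and here it must additionally be made uniform over the $C^D$-neighborhood.
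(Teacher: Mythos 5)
Your route to existence and finiteness --- verify non-uniform expansion and slow recurrence to the critical set, then invoke \cite{ABV00} for maps with a non-degenerate critical set --- is genuinely different from the paper's. The paper does establish exactly those two hypotheses in Section~2 (they are recorded around \eqref{eq.nue}, \eqref{eq.lower appr} and Remark~\ref{rmk23}, where they serve to apply \cite{ALP05} for Theorem~C), but for Theorem~A it does not pass through \cite{ABV00}: following \cite{Al00}, it builds an explicit countable partition $\mathcal{R}$ into rectangles adapted to hyperbolic returns, shows that the induced map $\phi|_R=\varphi^{h(R)}|_R$ is $C^2$ piecewise uniformly expanding with bounded distortion and uniformly large images (Propositions~\ref{prop38} and~\ref{prop310}), and applies \cite[Theorem~5.2]{Al00}, afterwards spreading the induced measure by $\varphi^j_*$. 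Your route is shorter for Theorem~A alone; the paper's pays for the explicit induced structure up front and then reuses it --- the large-image property of the rectangles is precisely what drives the topological mixing and uniqueness argument of Theorem~B (Proposition~\ref{prop52}), which an appeal to \cite{ABV00} as a black box would not supply. Both routes rest on the same technical core imported from \cite{HMS24}.

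Two points in your write-up need repair. First, the mechanism behind the fiberwise estimate is not a ``Benedicks--Carleson/Jakobson-type exclusion of bad parameters'': a parameter-exclusion argument would yield the conclusion only for a positive-measure set of $\alpha$, whereas Theorem~A asserts it for \emph{every} sufficiently small $\alpha$. What makes this possible is that the critical point of $h_D$ is pre-periodic (Misiurewicz), so the binding period along the repelling periodic orbit recovers the derivative loss for all small $\alpha$; the exclusion happens in phase space, via the large-deviations bounds $m(B_1(n)\cup B_2(n))\le \mathrm{const}\; e^{-\gamma\sqrt{n}}$ on the set of points with too deep or too frequent returns. Second, the ``Moreover'' clause is not merely a matter of constants being $C^D$-stable: a $C^D$-small perturbation of $\varphi_{\alpha,D}$ is in general no longer a skew product, so $\partial_x f$ and the vertical fibers lose their meaning. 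One must first construct a $\varphi$-invariant central foliation close to the vertical one (as in \cite[Section~7]{Al00} and \cite{AV02}) and run all estimates along its leaves; your proposal omits this step.
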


The neighborhood mentioned in Theorem A will be the set

\begin{equation}
\mathcal{N} = \left\{ \varphi\in C^D(\mathbb{S}^1\times\mathcal{M}); \ ||\varphi - \varphi_{\alpha,D}|| < \alpha  \right\}.
\label{alpha-nbd}
\end{equation}

\begin{maintheorem}
Every map $\varphi\in\mathcal{N}$ is topologically mixing and admits a unique invariant ergodic measure absolutely continuous with respect to the Lebesgue measure.
\end{maintheorem}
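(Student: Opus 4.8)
\noindent\emph{Plan and the hard part — topological mixing.} The strategy is to first prove that every $\varphi\in\mathcal N$ is topologically exact (hence topologically mixing), and then upgrade the finiteness in Theorem~A to uniqueness by a soft argument combining topological transitivity with the covering estimates that underlie the construction of the a.c.i.p. For the first step, fix a nonempty open set $U\subseteq\mathbb S^1\times\mathcal M$. Since the base map $g(\theta)=d\theta$ with $d\ge 16$ is uniformly expanding and topologically exact, and since the invariant cone field and the admissible-curve formalism used in the proof of Theorem~A keep forward images geometrically under control (nearly horizontal $C^2$ curves of uniformly bounded slope and curvature are mapped by $\varphi$, and by every $\varphi\in\mathcal N$, to finitely many such curves), one first shows that there is $N_0=N_0(U)$ with $\varphi^{N_0}(U)$ containing a \emph{full admissible curve}, i.e.\ the graph over all of $\mathbb S^1$ of such a $C^2$ map. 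Next, using that the fibre dynamics of every $\varphi\in\mathcal N$ is a small $C^D$-perturbation of the iteration of $h_D$, and that $h_D$ is designed (through its preperiodic/affine pieces and the $7/4$-normalization at the endpoints of $I''$) so that the associated one-parameter family $x\mapsto c+h_D(x)$ sweeps $\mathcal M$ as $c$ varies, uniformly for maps in $\mathcal N$, one proves a \emph{uniform} bound: there is $N_1$, independent of the curve, with $\bigcup_{j=0}^{N_1}\varphi^{j}(\gamma)=\mathbb S^1\times\mathcal M$ for every full admissible curve $\gamma$. Combining, $\varphi^{n}(U)=\mathbb S^1\times\mathcal M$ for all $n\ge N_0+N_1$, which is topological exactness and in particular topological mixing. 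This geometric step is the main obstacle: for a degenerate critical point of arbitrary order $D$ the expansion and curvature estimates are weaker than in the classical quadratic Viana case, so the stirring of full admissible curves and the resulting uniform filling time $N_1$ have to be controlled taking the higher-order critical behaviour, and its perturbations, into account.

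\medskip
\noindent\emph{Covering property from Theorem~A.} The construction of the a.c.i.p.\ in Theorem~A proceeds through nonuniform expansion and bounded distortion along hyperbolic times; from it one extracts the standard corollary that there exist $\delta_0>0$ and $C_0>1$ such that for Lebesgue-a.e.\ $x$ there are infinitely many $n$ — with positive lower density — for which $\varphi^n$ maps a neighbourhood $V_n\ni x$ diffeomorphically onto the ball $B(\varphi^n(x),\delta_0)$ with distortion at most $C_0$. Applying this at a Lebesgue density point of the ergodic basin $B(\mu)$ of an ergodic a.c.i.p.\ $\mu$, and using bounded distortion together with compactness of the centres $\varphi^n(x)$, one concludes that $B(\mu)$ contains, modulo a Lebesgue-null set, some ball of radius $\delta_0/2$.

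\medskip
\noindent\emph{Uniqueness.} By Theorem~A there are only finitely many ergodic a.c.i.p.'s; suppose, for contradiction, that $\mu_1\ne\mu_2$ are two of them. Their ergodic basins $B(\mu_1),B(\mu_2)$ are invariant, each has positive Lebesgue measure, and they are disjoint (a point cannot have Birkhoff averages converging to two different measures). By the previous paragraph each $B(\mu_i)$ contains an open ball $\Delta_i$ modulo a Lebesgue-null set. Topological transitivity (from the mixing established above) yields $m\ge 1$ with $W:=\Delta_1\cap\varphi^{-m}(\Delta_2)$ nonempty and open, hence $\leb(W)>0$. Since $\varphi$ is nonsingular (images and preimages of null sets are null), for $\leb$-a.e.\ $x\in W$ we have $x\in B(\mu_1)$ and $\varphi^m(x)\in B(\mu_2)$; but $B(\mu_1)$ is forward invariant, so $\varphi^m(x)\in B(\mu_1)$ as well, contradicting $B(\mu_1)\cap B(\mu_2)=\varnothing$. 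Therefore there is exactly one ergodic a.c.i.p., which is the measure $\mu^{*}$ furnished by Theorem~A, ergodic by construction. This proves Theorem~B.
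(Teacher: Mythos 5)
Your treatment of topological mixing --- which you yourself identify as the hard part --- contains a step that cannot work as stated. You claim there is $N_1$ such that $\bigcup_{j=0}^{N_1}\varphi^{j}(\gamma)=\mathbb S^1\times\mathcal M$ for every full admissible curve $\gamma$. A finite union of forward images of a curve is a one-dimensional set, so it can never equal the two-dimensional phase space; no choice of $N_1$ repairs this. Moreover, even the corrected target is wrong for even $D$: the paper shows (Lemma~\ref{lemma51}) that the attractor $\Lambda=\bigcap_n\varphi^n(\mathbb S^1\times I)=\varphi^2(\mathbb S^1\times I)$ is a \emph{proper} subset of $\mathbb S^1\times I$, so one can only hope for $\varphi^n(U)=\Lambda$, not $\varphi^n(U)=\mathbb S^1\times\mathcal M$; this is exactly how the paper defines topological mixing. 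The actual content of the proof is the quantitative growth argument of Proposition~\ref{prop52}: starting from the lower bound $|\varphi^{h(R)}(\{\theta\}\times J)|\ge\delta_1\alpha^{\frac{1}{D-1}(1-\frac{2\eta}{D-1})}$ of Proposition~\ref{prop38}, one grows a \emph{vertical} segment in three stages --- first to length $\delta_2\alpha^{1/D}$ by a doubling argument using Lemma~\ref{lema21'} each time the segment re-enters the critical neighbourhood, then to a length $\delta_3$ independent of $\alpha$ by binding to the periodic point $z=h_D^l(\tilde x)$ and exploiting $\rho_1^{1-\eta/D}>\rho_2$, and finally to the full fibre of $\Lambda$ using that $h_D$ has no wandering intervals and that the pre-orbit of $z$ is dense. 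None of this appears in your proposal; the sentences ``one first shows'' and ``one proves a uniform bound'' assert precisely what needs to be proved, and the mechanism you gesture at (horizontal sweeping of admissible curves) is not the one that can close the argument, because the difficulty sits entirely in the fibre direction near the order-$D$ critical point.

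Your uniqueness argument is a reasonable alternative in outline: the paper instead invokes the arguments of \cite[Section~7]{AV02} to deduce ergodicity of Lebesgue measure from topological mixing, whereas you combine a hyperbolic-time covering property (basins contain balls of uniform radius modulo null sets) with transitivity to rule out two distinct ergodic basins. That route is standard in the Alves--Bonatti--Viana framework and the disjointness/forward-invariance contradiction you run is sound, provided the covering lemma is actually established for these maps (it requires the slow recurrence estimate \eqref{eq.lower appr}, which the paper does provide). But since uniqueness is conditional on topological mixing in both your argument and the paper's, the unproved mixing step above is a genuine gap that leaves Theorem~B unestablished.
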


Let $f \colon M \to M$ be a transformation and $\mu$ be an invariant probability. 
Recall that the \textit{correlation function} of a pair of functions $\phi,\psi \colon M \to \mathbb{R}$ is defined by
$$
C_n(\phi,\psi) = \left|\int_{X} (\phi\circ f^n)\psi \ d\mu - \int_X \phi \ d\mu \int_X \psi \ d\mu  \right|,
$$ 
whenever the integrals make sense.
The rapid decay of correlation suggests that a system may retain strong statistical properties, such as the \emph{Central Limit Theorem}: given a H\"older continuous function $\phi$ which is not a coboundary ($\phi \neq \psi \circ f - \psi$, for every $\psi$) there exists $\sigma > 0$ such that for every interval $\mathcal{I} \subset \mathbb{R}$, we have
$$
\mu \left( x \in M \colon \frac{1}{\sqrt{n}} \sum_{i=0}^{n-1} \left( \phi(f^i(x)) - \int \phi \; d\mu \right) \in \mathcal{I} \right) \to \frac{1}{\sigma \sqrt{2\pi}} \int_\mathcal{I} e^{-t^2}/2\sigma^2 \; dt.
$$ 

From Remark~\ref{rmk23}, the properties of the maps $\varphi \in \mathcal{N}$, and the Theorem 2 in \cite{ALP05}, we obtain the following result directly.

\begin{maintheorem}
For every map $\varphi\in \mathcal{N}$ and for H\"older continuous observables, the decay of correlations satisfies $C_n \le \mathcal{O}(n^{-\zeta})$ for every $\zeta > 0$. 
Moreover, the Central Limit Theorem holds for $\varphi$.
\end{maintheorem}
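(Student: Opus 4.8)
The plan is to deduce Theorem C as a direct application of the abstract decay-of-correlations theorem of Alves, Luzzatto, and Pinheiro \cite[Theorem 2]{ALP05}, once we verify that every map $\varphi \in \mathcal{N}$ fits into their framework. Their result states, roughly, that if a map admits an induced Markov (or Gibbs--Markov) structure with a return-time function whose tail decays faster than any polynomial, then the correlations of H\"older observables with respect to the a.c.i.p.\ decay faster than any polynomial, and moreover the Central Limit Theorem holds for H\"older observables that are not coboundaries. So the work is entirely in checking hypotheses, not in proving a new estimate.

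First I would recall, via Remark~\ref{rmk23} and the construction underlying Theorems A and B, that each $\varphi \in \mathcal{N}$ is non-uniformly expanding with slow recurrence to the critical set, and that the hyperbolic-times machinery yields a partition of (a full-measure subset of) $\mathbb{S}^1 \times \mathcal{M}$ into domains on which a suitable iterate of $\varphi$ is a uniformly expanding diffeomorphism onto a reference ball — i.e.\ an induced full-branch Markov map with bounded distortion. Second, I would identify the return-time function $R$ of this induced scheme and invoke the tail estimate established earlier in the paper: the key input is that $\leb\{R > n\}$ decays super-polynomially (indeed, the earlier analysis of the expansion and recurrence times for the higher-order critical map gives a stretched-exponential or at least faster-than-any-polynomial bound). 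Third, with the Gibbs--Markov structure and the super-polynomial tail in hand, Theorem 2 of \cite{ALP05} applies verbatim and gives $C_n(\phi,\psi) \le \mathcal{O}(n^{-\zeta})$ for every $\zeta > 0$ and all H\"older $\phi,\psi$; the CLT follows from the same reference once topological mixing (Theorem B) guarantees that the only coboundary obstruction is the stated one.

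The main obstacle — and the only place requiring genuine care — is verifying that the tail of the return times is super-polynomial \emph{uniformly} for all $\varphi \in \mathcal{N}$, rather than merely for the model map $\varphi_{\alpha,D}$. This requires that the distortion bounds, the expansion rate at hyperbolic times, and the slow-recurrence constants obtained in the proofs of Theorems A and B are robust under $C^D$-perturbations of size $\alpha$; concretely one must track that the higher-order critical point of $h_D$ — which worsens the local expansion near the critical set compared with the quadratic case — still permits the same class of tail bounds, and that these bounds persist for nearby maps whose critical structure is only $C^D$-close. Once that uniformity is extracted from the earlier sections (which it should be, since those estimates were already stated for the whole neighbourhood $\mathcal{N}$), the remaining deduction is immediate. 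I would therefore organise the proof as: (i) quote the induced Markov structure and its properties from the earlier construction; (ii) quote the super-polynomial tail bound on $R$; (iii) cite \cite[Theorem 2]{ALP05} to conclude both the decay rate and the CLT, using Theorem B for mixing.
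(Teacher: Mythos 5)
Your proposal is correct and follows essentially the same route as the paper: the paper's entire argument is to verify, via Remark~\ref{rmk23} together with the estimates \eqref{eq.nue}, \eqref{eq.lower appr} and the stretched-exponential bound $m(E_n)\le e^{-\gamma\sqrt n}$, that every $\varphi\in\mathcal{N}$ is non-uniformly expanding with slow recurrence to $\mathcal{C}$ and has super-polynomial tails for the expansion/recurrence times, and then to invoke \cite[Theorem~2]{ALP05} for both the decay of correlations and the CLT. The only cosmetic difference is that you describe the intermediate Gibbs--Markov induced structure explicitly, whereas the paper leaves that step inside the black box of \cite{ALP05}, whose hypotheses are stated directly in terms of the tail of $\Gamma_n$.
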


The construction of the a.c.i.p.'s is based on the approach developed by Alves \cite{Al00} and Alves-Viana \cite{AV02}. 
However, in the present setting, the presence of a higher degree critical point requires several changes and adaptations.
For instance, the construction of a partition for which we can obtain a piecewise uniformly expanding \emph{induced map} with bounded distortion, a key element to obtain a.c.i.p., see \cite[Theorem~5.2]{Al00}.

\section{Preliminary results and definitions}

We consider maps $\varphi: \mathbb{S}^1\times\mathcal{M}\rightarrow \mathbb{S}^1\times\mathcal{M}$ of the form
\begin{equation}
\varphi(\theta,x) = (g(\theta),f(\theta,x)), \ \mbox{with} \ \partial_x f(\theta,x) = 0 \ \mbox{if and only if} \ x= \tilde{x}
\label{cond1}
\end{equation}
and derive our results as long as $\varphi\in\mathcal{N}$.

To obtain the growth of the derivative of $\varphi$, we will study the returns of orbits to a neighborhood of the critical point.
Since in the $\theta$-direction the maps expands uniformly, we focus in the derivative of $f$ in the $x$-direction. 
Roughly speaking, as the critical point $\tilde{x}$ is pre-periodic, the points $x$ close to $\tilde{x}$ remains close to the orbit of $\tilde{x}$ for a \emph{large} amount of time. 
The time that the orbit of $x$ remains bind to the periodic point, where the derivative expands, permits to recover the lack of derivative near to the critical point.
On the other hand, while a orbit remains out of a neighborhood of the critical point, the derivative expands.
These is the heuristic present in part of arguments to build expansion for the map. 
For future references, we will summarize these contents in the next lemma, their proofs are in \cite[Lemmas 2.4 and 2.5]{HMS24}.

Given $(\theta,x)\in \mathbb{S}^1 \times \mathcal{M}$ we define $(\theta_i,x_i) = \varphi^i(\theta,x)$.
For the next lemma we take a positive constant $0 < \eta < 1/3$ depending only on the map $h_D$.
\begin{lema}
For every $\alpha > 0$ small enough, there exists an integer $N(\alpha)$ satisfying
\begin{itemize}
\item[(a)] If $|x-\tilde{x}| < 2\sqrt[D]{\alpha}$, then $\prod_{j=0}^{N(\alpha)-1} |\partial_x f(\theta_j,x_j)| \geq |x-\tilde{x}|^{D-1}\alpha^{-1+\frac{\eta}{D-1}}$.
\item[(b)] If $|x-\tilde{x}|<2\sqrt[D]{\alpha}$, then $|x_j-\tilde{x}|>\sqrt[D]{\alpha}$ for every $j = 1,\ldots,N(\alpha)$.
\item[(c)] $C_0\log(1/\alpha) \leq N(\alpha) \leq C_1\log(1/\alpha)$, for some constants $C_0,C_1>0$ .
\end{itemize}
There are $\sigma >1$, $C_2 >0$ and $\delta > 0$ such that for every $(\theta,x)\in \mathbb{S}^1 \times \mathcal{M}$ with $|x_0-\tilde{x}|,\ldots,|x_{k-1}-\tilde{x}| \geq \sqrt[D]{\alpha}$ and $k \ge1$
\begin{itemize}
\item[(d)] $\prod_{j=0}^{k-1} |\partial_x f(\theta_j,x_j)| \geq C_2 \sqrt[D]{\alpha^{D-1}} \sigma^k$ . 
\item[(e)] If, in addition, $|x_k-\tilde{x}| < \delta$ then $\prod_{j=0}^{k-1} |\partial_x f(\theta_j,x_j)| \geq C_2\sigma^k.$
\end{itemize}
\label{lema21'}
\end{lema}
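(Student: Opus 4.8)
The plan is to read all five estimates off the Misiurewicz--type structure of $h_D$: since $\tilde{x}$ is pre-periodic, the postcritical orbit $\tilde{x}\mapsto c:=h_D(\tilde{x})\mapsto h_D^{2}(\tilde{x})\mapsto\cdots$ falls after a fixed finite number of steps onto a repelling periodic orbit $\mathcal{O}$ with $\tilde{x}\notin\mathcal{O}$. First I would record the following facts about the one--dimensional model, all robust (with slightly worse constants) under a $C^D$--perturbation of size $\alpha$ and hence available for every $\varphi\in\mathcal{N}$, because a hyperbolic periodic orbit and the non--flatness of order $D$ at $\tilde{x}$ persist: there are $\lambda>0$, $C_{*}>0$ with $\abs{(h_D^{n})'(c)}\ge C_{*}e^{\lambda n}$ for all $n$; there is a fixed $\delta_0>0$ such that the whole postcritical orbit stays at distance $\ge 3\delta_0$ from $\tilde{x}$, while $h_D'$ is Lipschitz and bounded away from $0$ on $\{\abs{y-\tilde{x}}\ge\delta_0\}$; the non--flatness gives $\abs{\partial_x f(\theta,x)}\ge c'\abs{x-\tilde{x}}^{D-1}$ near $\{x=\tilde{x}\}$; and from the explicit formulas \eqref{hd-even}--\eqref{hd-odd} one gets $\abs{\partial_x f(\theta,x)}\ge\sigma_0>1$ for $x$ outside the inner interval $I''$. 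The structural point that makes the fibered setting manageable is that in each fibre $\theta_j$ the map $\partial_x f(\theta_j,\cdot)$ is an $\alpha$--perturbation of $h_D'$ and that, comparing two points in the \emph{same} fibre, no additive drift appears, only the multiplicative Mean Value factor $\partial_x f(\theta_j,\xi_j)$.

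For (a)--(c) (the binding lemma): fix $(\theta,x)$ with $\abs{x-\tilde{x}}<2\sqrt[D]{\alpha}$, let $x_j$ be the fibre coordinate of $\varphi^j(\theta,x)$ and $c_j$ that of $\varphi^j(\theta,\tilde{x})$, and study the gap $g_j=\abs{x_j-c_j}$, which satisfies $g_1\le\text{const}\cdot\abs{x-\tilde{x}}^{D}\le\text{const}\cdot\alpha$ and $g_{j+1}=\abs{\partial_x f(\theta_j,\xi_j)}\,g_j$. As long as $g_j$ together with the drift of $(c_j)$ from the true $h_D$--postcritical orbit stay below $\delta_0$, bounded distortion — controlled because the orbit is then at distance $\ge\delta_0$ from $\tilde{x}$, where $h_D'$ does not vary much — gives $\prod_{j=1}^{m-1}\abs{\partial_x f(\theta_j,x_j)}\asymp\abs{(h_D^{m-1})'(c)}\gtrsim e^{\lambda m}$ and forces $\abs{x_j-\tilde{x}}\ge\delta_0>\sqrt[D]{\alpha}$, which is (b). Since $g_j$ and the drift both grow geometrically at rate $\approx e^{\lambda}$ out of an initial value $\lesssim\alpha$, this shadowing window has length $\asymp\lambda^{-1}\log(1/\alpha)$; I would \emph{define} $N(\alpha)$ to be any integer in the resulting non--empty window $\big[\tfrac{1-\eta/(D-1)}{\lambda}\log\tfrac1\alpha+O(1),\ \tfrac1\lambda\log\tfrac1\alpha+O(1)\big]$, which is exactly (c). Finally (a) follows by multiplying the $j=0$ factor $\abs{\partial_x f(\theta_0,x_0)}\ge c'\abs{x-\tilde{x}}^{D-1}$ by $e^{\lambda N(\alpha)}\ge\alpha^{-1+\eta/(D-1)}$ (valid by the choice of $N(\alpha)$ once $\eta\in(0,1/3)$ is taken small enough, depending only on $\lambda$, $D$ and the distortion constant).

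For (d)--(e) (the free lemma): given $x_0,\dots,x_{k-1}$ all at distance $\ge\sqrt[D]{\alpha}$ from $\tilde{x}$, decompose $\{0,\dots,k-1\}$ into maximal runs spent outside $B(\tilde{x},\delta_0)$, where each step contributes a factor $\ge\sigma_0>1$, and \emph{excursions}: a single time $j$ with $\abs{x_j-\tilde{x}}=d\in[\sqrt[D]{\alpha},\delta_0)$, after which the orbit leaves $B(\tilde{x},\delta_0)$ at once (towards $c$, since $\tilde{x}$ is not periodic) and enters a binding period of length $p(d)\asymp\tfrac{D}{\lambda}\log(1/d)$, produced exactly as in (a)--(c). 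On such an excursion the derivative product is $\asymp d^{D-1}e^{\lambda p(d)}\asymp d^{-1}$, which — fixing any $\sigma$ with $1<\sigma<\min(\sigma_0,e^{\lambda/D})$, and checking the endpoint case $d\approx\delta_0$ (where $p(d)$ is bounded) as a comparison of fixed constants — is $\ge\text{const}\cdot\sigma^{\,p(d)+1}$; concatenating all pieces yields $\prod_{j=0}^{k-1}\abs{\partial_x f(\theta_j,x_j)}\ge C_2\sigma^k$, \emph{except} that an excursion still inside its binding period at time $k-1$ leaves an un-recovered first factor of size $\ge d^{D-1}\ge\sqrt[D]{\alpha^{D-1}}$ — and that is (d). For (e), if in addition $\abs{x_k-\tilde{x}}<\delta$ with $\delta\le 2\delta_0$, the segment cannot end inside an unfinished excursion: $x_{k-1}\in B(\tilde{x},\delta_0)$ would force $x_k$ near $c$, far from $\tilde{x}$, and throughout a binding period the orbit stays at distance $\ge 2\delta_0\ge\delta$ from $\tilde{x}$; so no loss is incurred and $\prod_{j=0}^{k-1}\abs{\partial_x f(\theta_j,x_j)}\ge C_2\sigma^k$.

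The main obstacle I expect is not any single estimate but the joint calibration of the constants $\delta_0$, $\sigma_0$, $\sigma$, $\eta$, the cutoff $\sqrt[D]{\alpha}$ and the size $\alpha$ of the neighbourhood $\mathcal{N}$: the binding window $N(\alpha)$ must be long enough that (a) holds and that every excursion over--recovers in (d), yet short enough that the shadowing (including the drift from $\varphi\in\mathcal{N}$ and from the $\theta$--coupling) remains valid; and the bounded--distortion estimate and the Misiurewicz/Mañé--type expansion must be arranged so as to survive the fibered $C^D$--perturbation. This is precisely where the hypothesis that $\alpha$ be small — so that $\mathcal{N}$ itself shrinks with $\alpha$ — is used.
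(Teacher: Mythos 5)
Your argument is correct and is essentially the proof the paper relies on: the paper does not prove this lemma itself but cites \cite[Lemmas~2.4 and~2.5]{HMS24}, which establish exactly these estimates by the same Misiurewicz-type binding-period argument (shadowing of the pre-periodic critical orbit onto the repelling periodic orbit for $\asymp\log(1/\alpha)$ iterates, Ma\~n\'e-type expansion outside a fixed neighbourhood of $\tilde{x}$, and recovery of the factor $d^{D-1}$ at each return, with the unrecovered last return accounting for the $\sqrt[D]{\alpha^{D-1}}$ in (d)). The only point to handle with care, which you correctly flag under ``calibration,'' is that pointwise per-step expansion holds outside $I''$ but not outside an arbitrarily small $B(\tilde{x},\delta_0)$, so $\delta_0$ must be fixed (independent of $\alpha$) and returns to the annulus $[\sqrt[D]{\alpha},\delta_0)$ all routed through the binding mechanism, exactly as in the cited proofs.
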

The proof of itens (b) and (c) follows straightforward from the estimates in  \cite[Lemmas 2.4]{HMS24}, see also \cite[Lemma~2.1]{Al00}.

We now consider the full Lebesgue measure set of points $(\theta,x)\in \mathbb{S}^1\times \mathcal{M}$ that does not hit the critical set $\left\{ x = \tilde{x} \right\}$. 
We take the intervals $I_r = \left(\tilde{x} + \sqrt[D]{\alpha} e^{-r} ,    \tilde{x} + \sqrt[D]{\alpha}e^{-(r-1)}\right]$ for $r\geq 1$, and  $ I_r = \left[\tilde{x} - \sqrt[D]{\alpha} e^{r+1} , \tilde{x} - \sqrt[D]{\alpha}e^{r}\right)$ for $r\leq -1$.
For each $j\geq 0$, let 
$$
r_j(\theta,x) = \left\{
\begin{array}{ll}
|r|, & \mbox{if} \ \varphi^j(\theta,x)\in \mathbb{S}^1\times I_r \ \mbox{with} \ r \geq 1\\
0, & \mbox{if} \ \varphi^j(\theta,x)\in \mathbb{S}^1\times(\mathcal{M}\setminus I_r )
\end{array}  
\right. .
$$

We say that $v$ is a \emph{return situation} for $(\theta,x)$ if $r_v(\theta,x) \geq 1$. 
Given some positive integer $n$, let $0\leq v_1\leq\ldots\leq v_s\leq n$ be the return situations for $(\theta,x)$ from $0$ to $n-1$. 
Then, from Lemma \ref{lema21'} if follow that
\begin{align*}
\prod_{i = v_j}^{v_j+N(\alpha)-1}|\partial_x f(\theta_i,x_i)|  & \geq |x_{v_j}-\tilde{x}|^{D-1}\alpha^{-1+\frac{\eta}{D-1}} 
  \geq e^{-(D-1)r_{v_j}(\theta,x)}\alpha^{-1+\frac{D-1}{D}+\frac{\eta}{D-1}} \\
&  = e^{-(D-1)r_{v_j}(\theta,x)}\alpha^{-\frac{1}{D}+\frac{\eta}{D-1}},
\end{align*}
for every $j = 1,\ldots, s-1$.

Also from item (e) of Lemma \ref{lema21'}, for each $j = 1,\ldots, s-1$, we have the following estimates:
$$
\prod_{i=0}^{v_1-1} |\partial_x f(\theta_i,x_i)|  \geq C_2\sigma^{v_1} \quad \ \mbox{ and } \quad \ \prod_{i=v_j+N(\alpha)}^{v_{j+1}-1} |\partial_x f(\theta_i,x_i)| \geq C_2\sigma^{v_{j+1}-v_j-N(\alpha)}.$$



Finally, suppose $v_s = n$. Combining the three estimates above gives the following lower bound for $\log \prod_{i=0}^{n-1}|\partial_x f(\theta_i,x_i)|$: 


$$
[n-(s-1)N]\log \sigma +\sum_{k=1}^{s-1}\left[\left(\frac{1}{D}-\frac{\eta}{D-1} \right)\log\left(\dfrac{1}{\alpha}\right) - (D-1)r_{v_k}\right] - (s-1)\log(C_2).
$$

Consider 
$$
G_n(\theta,x) = \left\{0\leq v_i \leq n-1: \ r_{v_i}(\theta,x)\geq \dfrac{1}{D-1}\left(\dfrac{1}{D} -\dfrac{2\eta}{D-1} \right)\log\left(\dfrac{1}{\alpha}\right)  \right\}.
$$ 
Then, it follows from item (c) of Lemma \ref{lema21'}, that  
\begin{align*}
\sum_{k=1}^{s-1} \left[\left(\frac{1}{D}-\frac{\eta}{D-1} \right)  \log\left(\dfrac{1}{\alpha}\right) - (D-1)r_{v_k}\right] & \ge  
-(D-1)\sum_{i\in G_n} r_{i}(\theta,x) + \dfrac{(s-1)\eta}{D-1}\log\left(\dfrac{1}{\alpha}\right) \\
& \geq -(D-1)\sum_{i\in G_n} r_{i}(\theta,x) +\gamma N(\alpha) (s-1),
\end{align*}
for some constant $\gamma \leq \eta/(C_1(D-1))$.
 
 Now define $$c = \dfrac{1}{D+3}\min\left\{\gamma,\log\sigma \right\}.$$ 

From Lemma \ref{lema21'}, it follows $v_{j+1}-v_j \geq N(\alpha)$ for all $j$, which implies we must have 
\begin{equation}
s \leq \dfrac{n}{N(\alpha)}+1 \leq \dfrac{n}{C_0\log(1/\alpha)} +1
\label{eq9}
\end{equation} 
and choosing $\alpha$ small enough such that $\log C_2 \cdot \left[C_0\log(1/\alpha) \right]^{-1} \leq c$ we get $$s\log C_2 \leq cn +\log C_2.$$

Then our estimates for the lower bound become:
\begin{equation}
\log\prod_{i=0}^{n-1}|\partial_x f(\theta_i,x_i)| \geq (D+2) cn - (D-1)\sum_{i\in G_n} r_{i}(\theta,x),
\label{eq8}
\end{equation}
for $\alpha$ sufficiently small.

\begin{lema}
If $(\theta,x),(\tau,y)\in \mathbb{S}^1\times \mathcal{M}$ are points such that $r_j(\theta,x) \leq r_j(\tau,y) +4$ for every $j = 0,\ldots,n-1$, then 
$$
\prod_{i=1}^{n-1} |\partial_x f(\theta_i,x_i)| \geq \exp\left( (D+1)cn - (D-1)\sum_{j\in G_n(\tau,y)} r_{j}(\tau,y) \right) .
$$
\label{lemma23}
\end{lema}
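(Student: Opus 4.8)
\emph{Proof proposal.} The plan is to re-run, for the orbit of $(\theta,x)$, the block decomposition that produced \eqref{eq8}, but to make the reference point $(\tau,y)$ control the estimate by substituting the hypothesis $r_j(\theta,x)\le r_j(\tau,y)+4$ wherever a depth of $(\theta,x)$ occurs. Write $\rho=\tfrac1{D-1}\bigl(\tfrac1D-\tfrac{2\eta}{D-1}\bigr)\log(1/\alpha)$ for the threshold in the definition of $G_n$, let $0\le w_1<\dots<w_t\le n-1$ be the return situations of $(\theta,x)$ in $[0,n-1]$, and split $[0,n-1]$ into the binding blocks $[w_k,\,w_k+N(\alpha)-1]$ and the complementary free blocks, as in the passage leading to \eqref{eq8}. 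By Lemma~\ref{lema21'}(b) the orbit of $(\theta,x)$ stays at distance $>\sqrt[D]{\alpha}$ from $\tilde x$ throughout each binding block, so the free blocks genuinely are free and Lemma~\ref{lema21'}(e) applies to each of them, while Lemma~\ref{lema21'}(a) bounds each binding block from below.

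First, the binding estimate. On the block at $w_k$ one has, just as before,
$$
\log\prod_{i=w_k}^{w_k+N(\alpha)-1}\abs{\partial_x f(\theta_i,x_i)}\ \ge\ \Bigl(\tfrac1D-\tfrac\eta{D-1}\Bigr)\log\tfrac1\alpha-(D-1)r_{w_k}(\theta,x)\ \ge\ \Bigl(\tfrac1D-\tfrac\eta{D-1}\Bigr)\log\tfrac1\alpha-(D-1)r_{w_k}(\tau,y)-4(D-1).
$$
I would then separate the $k$'s according to whether $r_{w_k}(\tau,y)\ge\rho$ or not. If $r_{w_k}(\tau,y)<\rho$ then $(D-1)r_{w_k}(\tau,y)<\bigl(\tfrac1D-\tfrac{2\eta}{D-1}\bigr)\log\tfrac1\alpha$, so the block still yields a surplus of at least $\tfrac\eta{D-1}\log\tfrac1\alpha-4(D-1)$ --- exactly the role played by the ``shallow'' returns in the derivation of \eqref{eq8}. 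If $r_{w_k}(\tau,y)\ge\rho$ then (with $\alpha$ small so that $\rho\ge1$) $w_k$ is a return situation of $(\tau,y)$ and, having depth $\ge\rho$, lies in $G_n(\tau,y)$; hence the subtracted depths over all such $k$ sum to at most $\sum_{j\in G_n(\tau,y)}r_j(\tau,y)$. Adding the free-block contributions ($\ge\log\sigma$ per step) and invoking $\tfrac\eta{D-1}\log\tfrac1\alpha\ge\gamma N(\alpha)$ together with $(t-1)N(\alpha)\le n$ exactly as in \eqref{eq8}, the surviving main term is $\ge\min\{\gamma,\log\sigma\}\,n=(D+3)cn$.

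Second, absorb the bounded losses. By \eqref{eq9} there are $t\le n/(C_0\log(1/\alpha))+1$ return situations, so the total cost $4(D-1)t$ of replacing each $r_{w_k}(\theta,x)$ by $r_{w_k}(\tau,y)+4$, and the total cost $t\abs{\log C_2}$ coming from the constants in the free-block estimates, are each $\le cn+O(1)$ once $\alpha$ is small enough that $4(D-1)\bigl(C_0\log(1/\alpha)\bigr)^{-1}\le c$ and $\abs{\log C_2}\bigl(C_0\log(1/\alpha)\bigr)^{-1}\le c$; and passing from $\prod_{i=0}^{n-1}$ to $\prod_{i=1}^{n-1}$ only discards the single factor $\abs{\partial_x f(\theta_0,x_0)}$, at the cost of an additive constant. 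Collecting everything,
$$
\log\prod_{i=1}^{n-1}\abs{\partial_x f(\theta_i,x_i)}\ \ge\ (D+3)cn-cn-cn-(D-1)\sum_{j\in G_n(\tau,y)}r_j(\tau,y)+O(1),
$$
which for $\alpha$ small is the claimed inequality, the bounded term being absorbed into the gap between $(D+2)cn$ and $(D+1)cn$, exactly as $\log C_2$ is absorbed in obtaining \eqref{eq8}.

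The step I expect to be the real obstacle is the bookkeeping of which time indices are return situations for which point: the hypothesis only compares depths, so a given time may be a return situation for $(\theta,x)$ but not for $(\tau,y)$, or conversely, and one must check that the ``$+4$'' slack is precisely enough to keep the surplus-producing mechanism of \eqref{eq8} intact (this is where $\rho>4$, i.e.\ $\alpha$ small, is used) and, above all, that the returns of $(\theta,x)$ that are shallow for $(\tau,y)$ get \emph{absorbed} rather than charged $\sim\rho$ apiece --- a naive application of \eqref{eq8} to $(\theta,x)$ followed by a term-by-term use of $r_j(\theta,x)\le r_j(\tau,y)+4$ would lose a term of order $n/(C_0D)$, which is far too much. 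The treatment of the initial and final blocks (when $0$ or $n$ is a return situation, and small $n$) is routine, handled as in \cite{Al00,AV02}.
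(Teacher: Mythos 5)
Your proof is correct, but it takes a genuinely different route from the paper's — and, in fact, it repairs the step the paper leaves unjustified. The paper's proof applies (\ref{eq8}) to $(\theta,x)$ as a black box and then asserts
$\sum_{j\in G_n(\theta,x)}r_j(\theta,x)\le\sum_{j\in G_n(\tau,y)}r_j(\tau,y)+4(s-1)$,
which is precisely the ``naive'' route you flag at the end of your write-up: an index $j\in G_n(\theta,x)\setminus G_n(\tau,y)$ (i.e.\ one with $r_j(\tau,y)\in[\rho-4,\rho)$) contributes roughly $\rho\sim\log(1/\alpha)$ to the left-hand side and nothing to the right, and since there can be of order $n/(C_0\log(1/\alpha))$ such indices the discrepancy is of order $n/C_0$, which cannot in general be absorbed into $cn$. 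Your version instead substitutes the hypothesis $r_j(\theta,x)\le r_j(\tau,y)+4$ inside the block decomposition and classifies the binding blocks by the depth of $(\tau,y)$ rather than of $(\theta,x)$, so the borderline returns retain their surplus $\tfrac{\eta}{D-1}\log\tfrac1\alpha-4(D-1)$ instead of being charged $\sim\rho$ apiece; the only genuine loss is $4(D-1)$ per return, which (\ref{eq9}) does absorb into $cn$. What the paper's route buys is a two-line proof reusing (\ref{eq8}); what yours buys is a correct treatment of the threshold mismatch between $G_n(\theta,x)$ and $G_n(\tau,y)$, which is the actual content of the lemma. The additive $O(1)$ terms you leave floating (the ``$+1$'' in (\ref{eq9}), the $\log C_2$ from the free blocks, and the discarded $i=0$ factor --- harmless since $|\partial_x f|\le 4$) are at the same level of rigor as the derivation of (\ref{eq8}) itself.
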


\begin{proof}
By the hypothesis and the estimate (\ref{eq9}), we have 
$$
\sum_{j \in G_n(\theta,x)} r_j(\theta,x)  \leq \sum_{j\in G_{n}(\tau,y)} r_j(\tau,y)+ 4(s-1) \leq \sum_{j\in G_{n}(\tau,y)} r_j(\tau,y) + \dfrac{cn}{D-1}
$$
for $\alpha$ sufficiently small such that $4(D-1)\left[C_0\log(1/\alpha)\right]^{-1} \leq c$.

Then, it follows from estimate (\ref{eq8}) that $$\log\prod_{j=1}^{n-1} |\partial_{x}f(\theta_j,x_j)| \geq (D+1)cn - (D-1)\sum_{j\in G_n(\tau,y)} r_j(\tau,y)$$
and the result follows.
\end{proof}

Let $J(r) = \{x \colon |x - \tilde{x}| < \sqrt[D]{\alpha} e^{-r} \}$.
We define
$$
B_2(n) = \left\{ (\theta, x) \colon \text{there is } 1\le j < n \text{ with } x_j \in I(\lfloor\sqrt[D]{n} \rfloor) \right\}.
$$
Using estimates from \cite[Section~4]{HMS24}, we obtain
$$
m(B_2(n)) \le \text{const } e^{-\sqrt{n}/4}.
$$

Additionally, consider the set 
$$
B_1(n) = \left\{ (\theta,x) \notin B_2(n) \colon \sum_{i\in G_n} r_{i}(\theta,x) \ge cn \right\}.
$$
From estimates in \cite[Section~4]{HMS24}, it follows that 
$$
m(B_1(n)) \le \text{const } e^{-\gamma n},
$$
for some constant $\gamma > 0$.

We define $E_n = B_1(n) \cup B_2(n)$. 
Then
 \begin{itemize}
  \item[(i)] $m(E_n)\leq e^{-\gamma\sqrt{n}}$, for some constant $\gamma > 0$; 
  \item[(ii)] If $\theta\in \mathbb{S}^1 \times \mathcal{M }\setminus E_n$ then $\displaystyle\sum_{i\in G_n(\theta,x)} r_i(\theta,x) \leq c_1n$.  
\end{itemize}

Following reasoning in \cite[Section~4]{HMS24}, we conclude that
\begin{equation}
\label{eq.nue}
\sum_{i=0}^{n-1} \log || D\varphi(\theta_i,x_i)^{-1} || \le -cn, \text{ for every } (\theta,x) \notin B_1(n) \cup B_2(n),
\end{equation}
see also \cite[Section~6.2]{AA03}.
Thus, the map $\varphi$ is nonuniformly expanding.

Let $d_\delta((\theta,x),\mathcal{C})$ denote the $\delta$-\emph{truncated distance} from $(\theta,x)$ to the critical set $\mathcal{C} = \{(\theta, x) \colon x = \tilde{x}\}$ defined as $d_\delta((\theta,x),\mathcal{C}) = d((\theta, x),\mathcal{C})$ if $d((\theta,x),\mathcal{C}) \le \delta$ and $d_\delta((\theta,x),\mathcal{C}) = 1$ otherwise. 
For $\delta = \dfrac{1}{D-1}\left(\dfrac{1}{D} -\dfrac{2\eta}{D-1} \right)\log\left(\dfrac{1}{\alpha}\right)$, considering the definition of $I(r)$ and $r_i$, we obtain the bound
\begin{equation}
\label{eq.lower appr}
\sum_{i=0}^{n-1} -\log d_\delta(\varphi^i(\theta, x), \mathcal{C}) \le \gamma n \quad \text{for } (\theta,x) \notin B_1(n) \cup B_2(n). 
\end{equation}
Thus, the orbits of $\varphi$ exhibit \emph{slow approximation} to the critical set $\mathcal{C}$.

\begin{remark}
\label{rmk23} 
From the definition of the map $\varphi$, we can conclude that these maps \textit{behave like a power of the distance} to the critical set $\mathcal{C}$: there exist constants $B>1$ and $\beta >0$ such that for every $(\theta,x) \in \mathcal{S}^1 \times \mathcal{M} \setminus \mathcal{C}$, we have
$$
\frac{1}{B} \; d(x,\mathcal{C})^\beta \le \frac{||D\varphi (\theta,x)v||}{||v||}  \le B\; d(x,\mathcal{C})^{-\beta}, \text{ for every } v \in T_{(\theta,x)} \mathcal{S}^1 \times \mathcal{M}.
$$
Furthermore, from equations \eqref{eq.nue} and \eqref{eq.lower appr} it follows that the set $\Gamma_n$ consisting of points $(\theta,x)$ that exhibit nonuniformly expansion and slow approximation to the critical set, satisfies:
$$
m(\Gamma_n) \le \mathcal{O}(n^{-\zeta}), \text{ for every } \zeta > 0.
$$
As a result, we conclude that $\varphi \in \mathcal{N}$ satisfies the hypothesis in \cite[Theorem~2]{ALP05}
\end{remark}

\medskip

\begin{defi}
	Given $0<\tilde{\sigma}<1$, we say $n$ is a $\tilde{\sigma}$\emph{-hyperbolic time} for $(\theta,x)\in\mathbb{S}^1\times \mathcal{M}$ if $$\prod_{i=k}^{n-1}\left\| D\varphi(\theta_i,x_i)^{-1} \right\| \leq \tilde{\sigma}^{n-k},$$ for $k=0,\ldots, n-1$.
	\label{def23}
\end{defi} 

\begin{remark}
Under a mild assumption on the derivative, Definition \ref{def23} implies $$
\sum_{i\in G_n(\theta,x), i\geq k} r_i(\theta,x) \leq \dfrac{1}{D-1}	(c+\varepsilon)(n-k),
$$ 
for $k=0,\ldots, n-1$.
\end{remark}

\noindent Indeed, fixing  $0< \varepsilon \leq c/2$. 
Since $d\geq 16$, it follows that $e^{(D+1)c+\varepsilon} \leq d-\alpha$, for $\alpha$ sufficiently small. 
By taking the norm $\left\| D\varphi \right\|$ to be the maximum norm of its entries, a simple calculation together with estimates on the derivatives of $g$ and $f$ shows that $\left\|D\varphi(\theta,x)^{-1}\right\| = \left|\partial_x f(\theta,x)^{-1}\right|$. Then, from \eqref{eq8} and the definition of $\tilde{\sigma}$-hyperbolic time, we will assume that $$(D+2) c(n-k) - (D-1)\sum_{i\in G_n, i\geq k} r_{i}(\theta,x)  \geq  (n-k)\log\left(\tilde{\sigma}^{-1}\right).$$ Now, we can take $\tilde{\sigma}^{-1} = e^{(D+1)c-\varepsilon}$ to obtain $$ (D+2) c(n-k) - (D-1)\sum_{i\in G_n, i \geq k} r_{i}(\theta,x)  \geq [(D+1)c-\varepsilon](n-k)$$ and the claim follows.

We say that the hyperbolic time $n$ is a \emph{hyperbolic return} if $n$ is also a return situation for $(\theta,x)$.

\medskip

Fix an integer $p\geq 0$ sufficiently large. Define the sets $H_n\subset \mathbb{S}^1\times \mathcal{M}$ consisting of points in $\mathbb{S}^1\times \mathcal{M}$ whose first hyperbolic return greater than $p$ is $n$ and let $$H = \bigcup_{n\geq p} H_n.$$
We also have the sets $H^{*}_n \subset \mathbb{S}^1\times \mathcal{M}$ of those points $(\theta,x)$ whose first hyperbolic time is $n$ and $H^{*} = \bigcup_{n\geq p} H^{*}_n$. Clearly, $H_n\subset H^{*}_n$, for all $n\geq p$.

It follows from a lemma by Pliss (see \cite[Lemma 11.8]{Man87}) that $H^{*}$ has full Lebesgue measure.

\begin{prop}
There is an integer $n_0 = n_0(p,\varepsilon) \geq p$ such that for every $n\geq n_0$ we have 
$$
(\mathbb{S}^1 \times \mathcal{M}) \setminus E_n \subset H^{*}_p\cup \ldots \cup H^{*}_n.
$$
\label{hypetimesprop}
\end{prop}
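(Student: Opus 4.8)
The plan is to show that, for $n$ large, every point outside $E_n$ must have had some hyperbolic time in the window $[p,n]$; equivalently, the first hyperbolic time of such a point cannot exceed $n$. I would argue by contrapositive: suppose $(\theta,x) \notin H^{*}_p \cup \cdots \cup H^{*}_n$, so the first hyperbolic time of $(\theta,x)$ that is greater than $p$ — if it exists at all — is some $m > n$; in particular, for every $k$ with $p \le k \le n$, $k$ is \emph{not} a $\tilde\sigma$-hyperbolic time for $(\theta,x)$. I want to conclude $(\theta,x) \in E_n = B_1(n) \cup B_2(n)$.

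The key mechanism is a counting/Pliss-type dichotomy applied to the sequence $a_i := \log\|D\varphi(\theta_i,x_i)^{-1}\|$, $0 \le i \le n-1$. By the nonuniform expansion estimate \eqref{eq.nue}, once we know $(\theta,x) \notin B_1(n) \cup B_2(n)$ we would have $\sum_{i=0}^{n-1} a_i \le -cn$, and then Pliss's lemma (\cite[Lemma 11.8]{Man87}) guarantees a definite density of indices $k \le n$ that are $\tilde\sigma$-hyperbolic times for a suitable $\tilde\sigma$ — in particular at least one such $k$ exceeding the fixed $p$, once $n \ge n_0(p,\varepsilon)$, because the number of hyperbolic times in $[0,n]$ grows linearly in $n$ while $p$ is fixed. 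That would contradict the assumption that no $k \in [p,n]$ is hyperbolic. So the whole argument reduces to establishing the averaged bound $\sum_{i=0}^{n-1} a_i \le -cn$ for points outside $E_n$; but that is exactly what is recorded in \eqref{eq.nue}, and the set where it fails is by construction contained in $B_1(n) \cup B_2(n) = E_n$.

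More precisely, I would proceed as follows. First, fix $\varepsilon$ with $0 < \varepsilon \le c/2$ and set $\tilde\sigma^{-1} = e^{(D+1)c-\varepsilon}$ as in the Remark following Definition~\ref{def23}; choose $p$ large enough (as already required) that the hyperbolic-time estimates apply uniformly for $k \ge p$. Second, for $(\theta,x) \notin E_n$, invoke \eqref{eq.nue} to get $\sum_{i=0}^{n-1}\log\|D\varphi(\theta_i,x_i)^{-1}\| \le -cn$. Third, apply Pliss's lemma with the appropriate constants to the finite sequence $(a_i)_{0\le i<n}$ and $c' < c$: it yields a set of indices $0 \le k_1 < \cdots < k_\ell \le n$, with $\ell \ge \vartheta n$ for some $\vartheta = \vartheta(c,\varepsilon) > 0$, such that $\prod_{i=k}^{n-1}\|D\varphi(\theta_i,x_i)^{-1}\| \le \tilde\sigma^{n-k}$ does not quite follow directly — the cleaner route is to apply Pliss to the reversed/partial sums so that each $k_j$ is a $\tilde\sigma$-hyperbolic time in the sense of Definition~\ref{def23}, using that the $a_i$ are bounded above (which holds on $\mathbb{S}^1\times\mathcal{M}\setminus\mathcal C$ by Remark~\ref{rmk23} and, at returns, via Lemma~\ref{lema21'}(a)). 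Fourth, set $n_0 = n_0(p,\varepsilon) := \lceil (p+1)/\vartheta \rceil$; then for $n \ge n_0$ we have $\ell \ge \vartheta n \ge p+1$, so at least one $k_j$ satisfies $k_j > p$, hence $(\theta,x)$ has a hyperbolic time in $(p,n]$, i.e. its first hyperbolic time greater than $p$ is at most $n$, so $(\theta,x) \in H^{*}_p \cup \cdots \cup H^{*}_n$. This is the desired inclusion.

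The main obstacle is the third step: getting from the averaged contraction bound $\sum a_i \le -cn$ to the existence of an honest $\tilde\sigma$-hyperbolic time in Definition~\ref{def23}'s exact form — that is, the two-sided condition $\prod_{i=k}^{n-1}\|D\varphi^{-1}\| \le \tilde\sigma^{n-k}$ for the \emph{same} $n$ and a $k > p$. Pliss's lemma is tailored precisely for this, but it requires a uniform upper bound on the increments $a_i$, and near the critical set $\|D\varphi(\theta_i,x_i)^{-1}\|$ can be large (the derivative $\partial_x f$ is small). Here the point is that on the complement of $B_2(n)$ the orbit stays away from the deepest neighborhood $I(\lfloor\sqrt[D]{n}\rfloor)$, and the bind-to-the-periodic-orbit estimate in Lemma~\ref{lema21'}(a) together with the $G_n$-bookkeeping controls the total excess at returns; I would cite \cite[Section~4]{HMS24} and \cite[Section~6.2]{AA03} for the claim that these increments, suitably grouped over the binding intervals, are bounded in a way compatible with Pliss. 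Once that boundedness is in hand, the rest is the standard Pliss counting argument.
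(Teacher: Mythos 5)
Your argument is correct and is essentially the proof the paper defers to: the paper's ``proof'' is just a citation of \cite[Proposition~2.6]{Al00}, and that proposition is proved exactly by the Pliss-counting scheme you describe (nonuniform expansion on the complement of $E_n$ from \eqref{eq.nue}, Pliss's lemma giving a definite density $\vartheta>0$ of $\tilde\sigma$-hyperbolic times in $\{1,\dots,n\}$, and then $n_0\approx p/\vartheta$ forcing at least one of them to exceed $p$). One clarification on what you call the ``main obstacle'': it is not an obstacle, because you have the sign the wrong way around. Pliss's lemma is applied to $a_i=-\log\|D\varphi(\theta_i,x_i)^{-1}\|=\log|\partial_x f(\theta_i,x_i)|$ with hypothesis $\sum a_i\geq cn$, and it requires a uniform \emph{upper} bound $a_i\leq A$; here $A=\log 4$ works since $|\partial_x f|\leq 4$ everywhere (see the proof of Lemma~\ref{lemma39}). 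The blow-up of $\|D\varphi^{-1}\|$ near the critical set makes $a_i$ very negative, which Pliss's lemma tolerates with no extra input, so the appeal to $B_2(n)$, the binding-period grouping, and \cite{AA03} in your last paragraph is unnecessary for this step (those ingredients are needed to establish \eqref{eq.nue} itself, i.e.\ to control $E_n$, not to run Pliss). With that correction your four-step outline is exactly the intended proof.
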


\begin{proof}
 See Proposition 2.6 in \cite{Al00}. 
\end{proof}

By \cite[Lemma 4.4]{AV02}, the set $H$ of hyperbolic returns also have full Lebesgue measure.

\section{Partition of $\mathbb{S}^1\times \mathcal{M}$}
We begin defining a partition of $ \mathcal{M}$ by completing the intervals $I_r$ introduced in the previous section which we recall below: 
$$
I_r = \left(\tilde{x} + \sqrt[D]{\alpha} e^{-r} ,    \tilde{x} + \sqrt[D]{\alpha}e^{-r+1}\right] \ \mbox{for} \ r\geq 1, 
$$ 
$$
 I_r = \left[\tilde{x} - \sqrt[D]{\alpha} e^{r+1} , \tilde{x} - \sqrt[D]{\alpha}e^{r}\right), \ \mbox{for} \ r\leq -1.
$$
Then we write
$$
I_{0^{+}} = \left(\tilde{x} + \sqrt[D]{\alpha}, \tilde{x} + e\sqrt[D]{\alpha}\right] \ \ \mbox{and} \ \  I_{0^{-}} = \left[\tilde{x} - \sqrt[D]{\alpha}, \tilde{x} - e\sqrt[D]{\alpha}\right).
$$
For the case $\mathcal{M} = I$, we put 
$$
I_{+} = \left(I\setminus [\tilde{x}-e\sqrt[D]{\alpha},\tilde{x}+e\sqrt[D]{\alpha}]\right)\cap \mathbb{R}^{+} \ \mbox{and}  \ 
I_{-} = \left(I\setminus [\tilde{x}-e\sqrt[D]{\alpha},\tilde{x}+e\sqrt[D]{\alpha}]\right)\cap \mathbb{R}^{-}
$$
and when $\mathcal{M} = \mathbb{S}^1$, we take 
$$
I^{c} = \mathbb{S}^1\setminus [\tilde{x}-e\sqrt[D]{\alpha},\tilde{x}+e\sqrt[D]{\alpha}].
$$
This partition of $\mathcal{M}$ induces partitions on each fiber $\left\{\theta\right\}\times \mathcal{M} \subset \mathbb{S}^1\times \mathcal{M}$, for which we slightly abuse the notation and also refer to them as $I_r$, $I_{0^{+}}$, $I_{0^{-}}$ and $I^c$ for each $\theta \in \mathbb{S}^1$, when they make sense according to our definitions above.

We also introduce the following notation: 
$$
I_r^{+} = I_{r-1} \cup I_r\cup I_{r+1}, \ \ \mbox{for} \ \ |r| \geq 1,
$$ 
$$
I_{0^{+}}^{+} = I_{+}\cup I_{0^{+}} \cup I_{1} \ \  \mbox{and} \ \  I_{0^{-}}^{+} = I_{-}\cup I_{0^{-}} \cup I_{-1},
$$ 
if $\mathcal{M} = I$, and  
$$
I_{0^{+}}^{+} = I^c\cup I_{0^{+}} \cup I_{1} \ \ \mbox{and} \ \ I_{0^{-}}^{+} = I^c\cup I_{0^{-}} \cup I_{-1},
$$ 
if $\mathcal{M} = \mathbb{S}^1$.

We will now use the sets $H_n$ together with other requirements to construct a partition $\mathcal{R}$ of $\mathbb{S}^1 \times \mathcal{M}$ by rectangles as in \cite{Al00} to create a piecewise uniformly expanding map and apply his results about those kinds of maps in the construction of the invariant measure.

To construct this partition we consider initially the partition $\mathcal{Q}$ of the interval $I$ by the subintervals $I_r$, $I^{+}$ and $I^{-}$ and the following Markov partition of $\mathbb{S}^1$:
\begin{itemize}
  \item[(i)] $\mathcal{P}_1 =\left\{[\theta_{i-1},\theta_{i}); \ i = 1,\ldots, d \right\}$, where $\theta_0$ is the fixed point of $g$ closest to $0$ and  $\theta_0,\theta_1,\ldots, \theta_d = \theta_0$ are the preimages of $\theta_0$ under $g$, ordered according to the orientation of $\mathbb{S}^1$;
  \item[(ii)] $\mathcal{P}_n = \left\{\mbox{connected components of} \ g^{-1}(\omega); \ \omega \in \mathcal{P}_{n-1} \right\}$, for $n\geq 2$.    
\end{itemize}
Also, given $\omega \in \mathcal{P}_n$, denote by $\omega^{-}$ the left hand endpoint of $\omega$.

Then we construct $\mathcal{R} = \bigcup_{n\geq p}\mathcal{R}_n$ inductively, starting with the partition $\mathcal{P}_p\times\mathcal{Q}$, subdividing its rectangles and creating sets $\mathcal{R}_n$ of these subdivided rectangles at each step $n \geq p$, satisfying certain properties that we explain now (for more details, see \cite[Section 3]{Al00}).

\subsection{Requirements for the elements of $\mathcal{R}_n$}
The idea is to create these partitions in such a way that the restriction of certain iterations of $\varphi$ to the interior of these rectangles are uniformly expanding and $C^2$-diffeos onto its images. 
To guarantee that, we need four conditions to hold for the rectangles of $\mathcal{R}_n$, $n\geq p$:
\begin{itemize}
\item[] $(I_n)$ $H_n\subset \bigcup_{R\in\mathcal{R}_n} R$ and $R\cap H_n \neq \emptyset$ for every $R\in\mathcal{R}_n$. 
\item[] $(II_n)$ For every $0\leq j \leq n$ and $\omega \times J\in \mathcal{R}_n$ there is $I_{r_j}\in \mathcal{Q}$ such that $\varphi^j(\left\{\omega^{-}\right\}\times J) \subset I_{r_j}^{+}$, where $I_{r_j}^{+} = I_{r_j+1}\cup I_{r_j}\cup I_{r_j-1}$.
\end{itemize}

To state the other conditions, we consider the following subset of $\mathcal{R}_n$:
$$
\mathcal{R}_n^{*} = \left\{\omega\times J\in \mathcal{R}_n \ | \ \exists \ 0\leq j < n, \ \exists \ I_{r_j}\in \mathcal{Q}: \ I_{r_j}\subset \varphi^j(\left\{\omega^{-}\right\}\times J) \right\}.
$$

\begin{defi}
We will say that $\omega_n\times J_n \in\mathcal{R}_n$ is \emph{subordinate} to $\omega_l\times J_l \in \mathcal{R}_{l}^{*}$ if $\omega_n\subset \omega_l$, $J_n$ and $J_l$ have a common endpoint, and there is $j\leq l$ and $I_{r_j}\in \mathcal{Q}$ for which the following holds:
\begin{itemize}
\item[(i)] $I_{r_j} \subset \varphi^j(\left\{\omega^{-}\right\}\times J_l)$; 
\item[(ii)] $I_{r_j+1}\subset \varphi^j(\left\{\omega_l^{-}\right\}\times J_n)$ or $I_{r_j-1} \subset \varphi^j(\left\{\omega_l^{-}\right\}\times J_n)$. \end{itemize}
\end{defi} 

\vspace{0.2cm}

The third condition required on the rectangles is
\begin{itemize}
\item[] $(III_n)$ For every $R\in\mathcal{R}_n$, either $R\in\mathcal{R}_n^{*}$ or $R$ is subordinate to some $R^{*}\in \mathcal{R}_l^{*}$ with $l\leq n$.
\end{itemize}

This condition guarantees that the rectangles in $\mathcal{R}_n$  eventually have \emph{large size}, which is required to prove the existence of invariant measures.

At each step $n\geq p$ the inductive process will create another partition $\mathcal{S}_n$ that contains the set of points that are not in the rectangles $R\in\mathcal{R}$ constructed at moment $n$, that is, $\mathcal{S}_n$ is the partition of the set $$(\mathbb{S}^1\times \mathcal{M}) \setminus \bigcup_{i=p}^n \bigcup_{R\in\mathcal{R}_i} R.$$  

Rectangles in $\mathcal{S}_n$ will also have the form $\omega\times J$, with $\omega \in \mathcal{P}_n$ and $J$ is a subinterval of some interval $I_{r_j}\in\mathcal{Q}$. The rectangles $R\in \mathcal{R}_{n+1}$ are constructed out of those rectangles in $\mathcal{S}_n$ and so, to ensure property $(III)_j$ for rectangles in $\mathcal{R}_j$ with $j>n$, we will require that for all $n\geq p$ the following holds:

\begin{itemize}
\item[]  $(IV_n)$ For every $\omega\times J\in \mathcal{S}_n$, either $J = I_r$, for some $I_r\in\mathcal{Q}$, or $\omega\times J$ is subordinate to some $R^{*}\in\mathcal{R}_l^{*}$ with $l \leq n$.
\end{itemize}

\subsection{Construction of the partition}

The construction is done inductively. For the first step, take an arbitrary $\omega_p\in\mathcal{P}_p$ and let $\mathcal{J}_0$ be the family of intervals $I_r\in\mathcal{Q}$ such that $(\omega_p\times I_r)\cap H_p\neq \emptyset$. Now take the sets $\varphi(\left\{\omega_p^{-}\right\}\times J_0)$, with $J_0\in\mathcal{J}_0$, and consider the following two possible cases:

\vspace{0.3cm}

\hspace{0.2cm} (a) $I_r\subset \varphi(\left\{\omega_p^{-}\right\}\times J_0)$, for some $I_r\in\mathcal{Q}$.

\vspace{0.2cm} 

In this case we write $J_0 = \bigcup_{i_1} J_{i_1}$, where the $J_{i_1}$'s are intervals satisfying 
\begin{equation}
I_{r_{i_1}}\subset \varphi(\left\{\omega_p^{-}\right\}\times J_{i_1}) \subset I_{r_{i_1}}^{+},
\label{inc1}
\end{equation}
for some $I_{r_{i_1}} \in\mathcal{Q}$. We obtain the intervals $J_{i_1}$ by taking $$J_{i_1} = J_0\cap \varphi^{-1}(\left\{g(\omega_p^{-})\right\}\times I_{r_{i_1}}),$$ except for the two end subintervals in $J_0$, which, if necessary, may be joined to the adjacent ones in order to guarantee the first inclusion in (\ref{inc1}).

We take $\mathcal{J}_1$ to be the sets $J_{i_1}$ in the union above such that $(\omega_p\times J_{i_1})\cap H_p \neq \emptyset$.

\vspace{0.3cm}

\hspace{0.2cm} (b) $\varphi(\left\{\omega_p^{-}\right\}\times J_0)$ does not contain any $I_r\in\mathcal{Q}$.

\vspace{0.2cm}

In this case, we do not divide $J_0$ and say $J_0\in\mathcal{J}_1$.

\vspace{0.3cm}

Now we take $J_1\in\mathcal{J}_1$ and consider the sets $\varphi^2(\left\{\omega_p^{-}\right\}\times J_1)$. If $I_r\subset\varphi^2(\left\{\omega_p^{-}\right\}\times J_1)$, for some $I_r\in\mathcal{Q}$, we decompose $J_1 = \bigcup_{i_2} J_{i_2}$ as above and take $\mathcal{J}_2$ to be the family of those intervals $J_{i_2}$ satisfying $(\omega_p\times J_{i_2})\cap H_p \neq \emptyset$. On the other hand, if $\varphi^2(\left\{\omega_p^{-}\right\}\times J_1)$ does not contain any $I_r\in \mathcal{Q}$, we say $J_1 \in\mathcal{J}_2$.

We procede like that until the $(p-1)$th iterate, defining in this way the family of sets $\mathcal{J}_{p-1}$. Let $\mathcal{C}_{p-1}$ be the of connected components of $$J_0 \setminus \bigcup_{J\in \mathcal{J}_{p-1}} J.$$

Then, given $J \in \mathcal{J}_{p-1}\cup\mathcal{C}_{p-1}$, we say that $\omega_p\times J\in \mathcal{R}_p$ if $J\in\mathcal{J}_{p-1}$, and $\omega_p\times J \in\mathcal{S}_p$ if $J\in\mathcal{C}_{p-1}$. Repeating this procedure with all $\omega_p\in \mathcal{P}_p$ we obtain all the rectangles in $\mathcal{R}_p$ and $\mathcal{S}_p$. Constructed in this way, the have collections $\mathcal{R}_p$ and $\mathcal{S}_p$ satisfies conditions ($I_p$)-($IV_p$), and in fact we have $\mathcal{R}_p = \mathcal{R}_p^{*}$.

Now suppose we have defined families $\mathcal{R}_p,\ldots, \mathcal{R}_n$ and $\mathcal{S}_n$ satisfying ($I_n$)-($IV_n$). We define $\mathcal{R}_{n+1}$ and $\mathcal{S}_{n+1}$ inductively as follows.
Take $S \in\mathcal{S}_n$. By inductive hypothesis, it follows that $S = \omega_{n}\times J_n$ with $\omega_n\in\mathcal{P}_n$ and $J_n \subset I_r$, for some $I_r\in\mathcal{Q}$. We write $$S = \bigcup_{i=1}^{d} \left(\omega^i_{n+1}\times J_n \right),$$ where $\omega_{n+1}^1,\ldots,\omega^d_{n+1}$ are intervals in $\mathcal{P}_{n+1}$ that cover $\omega_n$. We then distinguish the following two cases:

\vspace{0.2cm}

\hspace{0.2cm} (a) $\left(\omega^i_{n+1}\times J_n\right)\cap H_n = \emptyset$.
 
\vspace{0.2cm}

In this case we say that $\omega^i_{n+1}\times J_n\in \mathcal{S}_{n+1}$ and it's obvious that property ($IV_{n+1}$) is true since $J_n$ has not been divided.

\vspace{0.2cm}

\hspace{0.2cm} (b) $\left(\omega^i_{n+1}\times J_n\right)\cap H_n \neq \emptyset$.

\vspace{0.2cm}

In this case we also have two possible cases:

\vspace{0.1cm}

\hspace{0.2cm} (i) $\exists \ 0\leq j \leq n$ and $\exists \ I_{r_j}\in \mathcal{Q}$ such that $I_{r_j} \subset \varphi^j(\omega^{i-}_{n+1}\times J_n)$.

\vspace{0.2cm}

In this case, we make divisions of $J_n$ as we in the first step, starting the process with $\omega^i_{n+1}\times J_n$ instead of $\omega\times J_0$, defining in this way rectangles in $\mathcal{R}_{n+1}$ and $\mathcal{S}_{n+1}$ contained in $\omega^{i}_{n+1}\times J_{n}$. As before, conditions ($I_{n+1}$)-($IV_{n+1}$) are verified directly by the construction.

\vspace{0.2cm}

\hspace{0.2cm} (ii) $\varphi^j(\omega^i_{n+1}\times J_n)$ contains no $I_r\in\mathcal{Q}$, for $0\leq j \leq n$.

\vspace{0.2cm}

In this case we say that $\omega^i_{n+1}\times J_n\in \mathcal{R}_{n+1}$ and this implies that ($II_{n+1}$) is true. Indeed, for each $0\leq j \leq n$, there must be some $I_{r_j}\in\mathcal{Q}$ such that $\varphi^j(\omega^{i-}_{n+1}\times J_n)\cap I_{r_j} \neq \emptyset$ and so $\varphi^j(\omega^{i-}_{n+1}\times J_n) \subset I_{r_j}^{+}$, otherwise, either $I_{r_j-1}$ or $I_{r_j+1}$ would be contained in $\varphi^j(\omega^{i-}_{n+1}\times J_n)$. Condition ($III_{n+1}$) is also true since no division was made in $J_n$ and ($IV_{n+1}$) also follows from the construction.  

\vspace{0.2cm}

The induction is complete. Since ($I_n$) is valid for all $n \geq p$ and $H = \bigcup_{n\geq p} H_n$ has full Lebesgue measure, it follows that $\mathcal{R} = \bigcup_{n\geq p} \mathcal{R}_n$ is indeed a partition of $\mathbb{S}^1\times I$. 

We now prove some geometrical properties of the rectangles in the partition $\mathcal{R}$ that will be required later. A set $\hat{X}\subset \mathbb{S}^1\times \mathcal{M}$ is an \emph{admissible curve} if it is the graph of a map $X : \mathbb{S}^1 \rightarrow \mathcal{M}$ satisfying the following conditions:

\begin{itemize}
	\item $X$ is $C^2$ except, possibly, being discontinuous on the left at $\theta = \theta_0$;
	\item $|X'(\theta)|\leq \alpha$ and $|X''(\theta)|\leq \alpha$ at every $\theta \in \mathbb{S}^1$. 
\end{itemize}

Given $\omega\subset \mathbb{S}^1$, we denote $\hat{X} | \omega = G(X|\omega)$, where $G(X|\omega)$ is the graph of the restriction of the map $X$ to the subset $\omega$.

\begin{lema}

If $\hat{X}$ is an admissible curve and $\omega \in \mathcal{P}_n$, then $\varphi^n(\hat{X}|\omega)$ is also an admissible curve.
\label{lemma32}
\end{lema}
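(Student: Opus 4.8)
I would argue by induction on $n$, reducing everything to the one-step case $n=1$ (that is, $\omega\in\mathcal P_1$), which is a direct computation exploiting that $g$ expands by the large factor $d\ge16$. Two facts about the partitions $\mathcal P_n$ make the induction work: $\mathcal P_n$ refines $\mathcal P_1$ (its endpoints form $g^{-n}(\theta_0)\supset g^{-1}(\theta_0)$, since $\theta_0$ is fixed), so every $\omega\in\mathcal P_n$ lies in a unique $\hat\omega\in\mathcal P_1$; and $g$ maps each $\omega\in\mathcal P_n$ bijectively onto an element $g(\omega)\in\mathcal P_{n-1}$. Granting the case $n=1$, for $n\ge2$ and $\omega\in\mathcal P_n$ set $\hat Z:=\varphi(\hat X|\hat\omega)$, where $\hat\omega\in\mathcal P_1$ is the element containing $\omega$. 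By the case $n=1$, $\hat Z$ is an admissible curve, and since $\omega\subset\hat\omega$ we have $\varphi(\hat X|\omega)=\hat Z|g(\omega)$ with $g(\omega)\in\mathcal P_{n-1}$; hence $\varphi^n(\hat X|\omega)=\varphi^{n-1}(\hat Z|g(\omega))$ is an admissible curve by the inductive hypothesis.

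\textbf{The base case $n=1$.} Fix $\hat\omega=[a,b)\in\mathcal P_1$. Then $g|\hat\omega\colon\hat\omega\to\mathbb{S}^1$ is a continuous bijection with $(g|\hat\omega)'\equiv d$ and $g(a)=g(b)=\theta_0$, so $\varphi(\hat X|\hat\omega)$ is the graph over $\mathbb{S}^1$ of $Y:=f(\cdot,X(\cdot))\circ(g|\hat\omega)^{-1}$. It takes values in $\mathcal{M}$: automatic when $\mathcal{M}=\mathbb{S}^1$, and when $\mathcal{M}=I$ because $\varphi\in\mathcal N$ maps $\mathbb{S}^1\times I$ into itself. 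Since $f\in C^D$ with $D\ge2$, $X$ is $C^2$ off $\theta_0$, and $(g|\hat\omega)^{-1}$ is smooth on $\mathbb{S}^1\setminus\{\theta_0\}$ with a left jump at $\theta_0$, a routine check of one-sided limits shows $Y$ is $C^2$ on $\mathbb{S}^1$ except for a possible left discontinuity at $\theta_0$. It remains to estimate $Y',Y''$. Putting $\theta=(g|\hat\omega)^{-1}(\theta')$, so $d\theta/d\theta'=1/d$, one computes $Y'(\theta')=\tfrac1d\bigl(\partial_\theta f+\partial_x f\cdot X'\bigr)$ and $Y''(\theta')=\tfrac1{d^2}\bigl(\partial^2_\theta f+2\,\partial_\theta\partial_x f\cdot X'+\partial^2_x f\cdot(X')^2+\partial_x f\cdot X''\bigr)$, all partials of $f$ evaluated at $(\theta,X(\theta))$ (the factor $2$ uses that mixed partials of the $C^2$ map $f$ commute). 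For $\varphi\in\mathcal N$ one has $|\partial_\theta f|\le(2\pi+1)\alpha$ and $|\partial^2_\theta f|\le\bigl((2\pi)^2+1\bigr)\alpha$ (the unperturbed values are $2\pi\alpha\cos(2\pi\theta)$ and $-(2\pi)^2\alpha\sin(2\pi\theta)$), $|\partial_\theta\partial_x f|\le\alpha$ (unperturbed value $0$), and $|\partial_x f|\le A_x$, $|\partial^2_x f|\le B_x$ for constants $A_x<5$, $B_x=O(1)$ coming from the explicit form of $h_D$ (on $I\setminus I'$ one has $|h_D'|=2|x|<4$, on $I''$ one has $|h_D'|\le7/4$, with a monotone transition of $h_D',h_D''$ in between, plus an $O(\alpha)$ perturbation). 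Using $|X'|,|X''|\le\alpha$, this gives $|Y'|\le\tfrac\alpha d\bigl((2\pi+1)+A_x\bigr)$ and $|Y''|\le\tfrac\alpha{d^2}\bigl((2\pi)^2+1+A_x+(B_x+2)\alpha\bigr)$; since $d\ge16$, the first bracketed factor is $\le d$ and the second is $\le d^2$, hence $|Y'|\le\alpha$ and $|Y''|\le\alpha$. Thus $\varphi(\hat X|\hat\omega)$ is an admissible curve, which closes the base case and the induction.

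\textbf{Main difficulty.} There is no conceptual obstacle; the real content is the constant bookkeeping in the base case, where one must verify $(2\pi+1)+A_x\le d$ and $(2\pi)^2+1+A_x+O(\alpha)\le d^2$ — which is exactly what the hypotheses $d\ge16$ and $\alpha$ small afford — together with the somewhat fiddly one-sided-limit check that the ``$C^2$ except a possible left discontinuity at $\theta_0$'' structure is preserved under composition with $(g|\hat\omega)^{-1}$.
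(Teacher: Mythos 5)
Your proof is correct, and it is essentially the standard argument for this lemma: the paper itself only cites \cite[Lemma~2.1]{HMS24} (which in turn follows Viana's and Alves's original computation), and that argument is exactly your reduction to a single iterate via the Markov property of the partitions $\mathcal{P}_n$, followed by the chain-rule estimates on $Y'$ and $Y''$ using $d\ge 16$, the bounds $|\partial_\theta f|,|\partial_\theta\partial_x f|,|\partial^2_\theta f|=O(\alpha)$, and the uniform bounds on $\partial_x f$ and $\partial^2_x f$ coming from $h_D$. No gaps.
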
 

\begin{proof}
See \cite[Lemma 2.1]{HMS24}, .
\end{proof}

\begin{cor}
If $R\in\mathcal{R}_n$ for some $n\geq p$, then the boundary of $\varphi^n(R)$ is made of two vertical lines and two admissible curves.
\label{cor33}
\end{cor}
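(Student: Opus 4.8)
The plan is to analyze separately what happens to the four sides of a rectangle $R = \omega \times J \in \mathcal{R}_n$ under $\varphi^n$, using the Markov structure in the $\theta$-direction and Lemma~\ref{lemma32} for the $x$-direction. Write $\omega = [\omega^-, \omega^+) \in \mathcal{P}_n$ and $J = [a,b]$ (an interval of $\mathcal{M}$). The two \emph{vertical} sides of $R$ are $\{\omega^-\}\times J$ and $\{\omega^+\}\times J$, and the two \emph{horizontal} sides are $\omega \times \{a\}$ and $\omega \times \{b\}$.

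First I would treat the vertical sides. Since $\omega \in \mathcal{P}_n$ and $\mathcal{P}_n$ is the $n$-th refinement of the Markov partition $\mathcal{P}_1$ (whose atoms have endpoints among the preimages of the fixed point $\theta_0$), the map $g^n$ sends $\omega$ diffeomorphically onto one of the atoms of $\mathcal{P}_1$; in particular $g^n(\omega^-)$ and $g^n(\omega^+)$ are well-defined points of $\mathbb{S}^1$, and $g^n\vert_\omega$ is a bijection onto its image. Because $\varphi$ has the skew-product form \eqref{cond1}, $\varphi^n(\{\theta\}\times J) \subset \{g^n(\theta)\}\times \mathcal{M}$ for any $\theta$; hence $\varphi^n(\{\omega^-\}\times J)$ is contained in the single vertical fiber $\{g^n(\omega^-)\}\times\mathcal{M}$, and likewise for $\omega^+$. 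So each vertical side of $R$ maps into a vertical segment. That these are genuine (nondegenerate) vertical line segments, and not collapsed to points, follows from the fact that $\varphi^n$ restricted to $\interior(R)$ is a $C^2$-diffeomorphism onto its image (the whole point of the construction of $\mathcal{R}$), so it is injective on $\overline{R}$ up to the obvious identifications; alternatively one notes $\partial_x f \neq 0$ off the critical set and, by condition $(II_n)$, the orbit of $\{\omega^-\}\times J$ stays away from $\{x=\tilde x\}$, so $x \mapsto f^n(\omega^-, x)$ is monotone on $J$.

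Next I would treat the horizontal sides. The set $\omega\times\{a\}$ is the graph over $\omega$ of the constant function $X\equiv a$, which is trivially an admissible curve (zero derivatives, and $0 \le \alpha$; constant functions are $C^2$ and continuous everywhere). Extending $X\equiv a$ to all of $\mathbb{S}^1$ as the constant $a$ gives an admissible curve $\hat X$ with $\hat X\vert\omega = \omega\times\{a\}$, and Lemma~\ref{lemma32} applies: $\varphi^n(\hat X\vert\omega)$ is again an admissible curve, i.e.\ the graph of a $C^2$ map on $g^n(\omega)$ with first and second derivatives bounded by $\alpha$ (with the possible left-discontinuity at $\theta_0$ absorbed as in the definition). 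The same argument applies to $\omega\times\{b\}$. Thus the two horizontal sides of $R$ map to two admissible curves, and since $\varphi^n(R)$ is bounded in the $\theta$-direction by the two vertical fibers $\{g^n(\omega^\pm)\}\times\mathcal{M}$ found above and in the $x$-direction by these two admissible curves, $\partial\varphi^n(R)$ consists exactly of two vertical lines and two admissible curves.

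The only delicate point — and the one I would be most careful about — is the degenerate case $\omega^- = \theta_0$ (or, after iteration, $g^j(\omega^-) = \theta_0$ for some $j<n$). There the admissible curves are allowed to be discontinuous on the left, and one must make sure that $g^n(\omega)$ is still a full atom of $\mathcal{P}_1$ rather than something whose closure wraps around $\theta_0$ ambiguously; this is exactly why the definition of admissible curve builds in the left-discontinuity at $\theta_0$, and why $\mathcal{P}_1$ was defined using the preimages of the fixed point $\theta_0$. So this case is handled by the conventions already in place, and the corollary follows.
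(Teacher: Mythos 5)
Your argument is correct and is exactly the expansion of the paper's one-line proof (``follows from the construction and Lemma~\ref{lemma32}''): horizontal sides are graphs of constant functions, hence admissible curves whose images are admissible curves by Lemma~\ref{lemma32}, and vertical sides stay in single fibers by the skew-product structure. The only slip is that $g^{n}(\omega)=\mathbb{S}^1$ for $\omega\in\mathcal{P}_n$ (it is $g^{n-1}(\omega)$ that is an atom of $\mathcal{P}_1$), so the two vertical boundary segments both sit over $\theta_0$, one from each side of the cut --- which is precisely why admissible curves are allowed a left-discontinuity there, and does not affect your conclusion.
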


\begin{proof}
Follows immediately from the construction of the rectangles and Lemma \ref{lemma32}.
\end{proof}

For the next lemma, we suppose that $0 < \eta \leq \frac{1}{4}$.

\begin{lema}
	There is some constant $\delta_0>0$ such that if $\alpha$ is sufficiently small, then for every $(\sigma,y)\in H_n$ and $0\leq j \leq n$ we have
	$$ |I_{r_j(\sigma,y)+5}| \geq  \delta_0 \cdot \alpha^{\frac{1}{D-1} \left(1-\frac{2\eta}{D-1}\right)} \cdot e^{-(c+\varepsilon)(n-j)}\geq 4\alpha(d-\alpha)^{-(n-j)}.$$
	\label{lemma34}
\end{lema}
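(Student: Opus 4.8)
The plan is to bound $|I_{r_j(\sigma,y)+5}|$ from below using two facts: the explicit length of the intervals $I_r$ (which decays geometrically in $r$) and an upper bound on the return depth $r_j(\sigma,y)$ coming from the fact that $(\sigma,y)\in H_n$ means $n$ is a hyperbolic return, so the sub-exponential growth estimate of Lemma~\ref{lema21'} together with the truncated-distance/return-depth bound forces $r_j$ to be not too large relative to $n-j$. First I would write $|I_r| = \sqrt[D]{\alpha}\,e^{-r}(e-1)$ (up to the sign for $r\le -1$), so that $|I_{r_j+5}| = (e-1)e^{-5}\,\sqrt[D]{\alpha}\,e^{-r_j}$; this already isolates the quantity $e^{-r_j}$ that must be controlled.

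The second step is the key estimate: since $(\sigma,y)\in H_n$, the index $n$ is a $\tilde\sigma$-hyperbolic return, hence in particular a $\tilde\sigma$-hyperbolic time, and applying Definition~\ref{def23} from the starting index $k=j$ gives $\prod_{i=j}^{n-1}\|D\varphi(\sigma_i,y_i)^{-1}\|\le \tilde\sigma^{\,n-j}$, i.e.\ $\prod_{i=j}^{n-1}|\partial_x f(\sigma_i,y_i)|\ge \tilde\sigma^{-(n-j)} = e^{((D+1)c-\varepsilon)(n-j)}$. On the other hand, the return at time $j$ contributes, by item (a) of Lemma~\ref{lema21'} applied along the block of length $N(\alpha)$ starting at $j$ (using $|y_j-\tilde x| \le e^{-r_j+1}\sqrt[D]{\alpha}$ since $y_j\in I_{r_j}$), a factor bounded above by roughly $|y_j-\tilde x|^{D-1}\alpha^{-1+\eta/(D-1)}$ times the contribution of the remaining iterates; combining these and using the slow-approximation bound \eqref{eq.lower appr} (or directly the definition of $\delta$ and $r_i$) to estimate the other return depths in the block $[j,n)$ yields an inequality of the shape
$$
(D-1)r_j(\sigma,y) \le \Big(\tfrac{1}{D}-\tfrac{2\eta}{D-1}\Big)\log\tfrac{1}{\alpha} + (c+\varepsilon)(n-j) + \text{const}.
$$
Exponentiating gives $e^{-r_j}\ge \text{const}\cdot \alpha^{\frac{1}{D-1}(\frac1D-\frac{2\eta}{D-1})}\cdot e^{-(c+\varepsilon)(n-j)/(D-1)}$; dividing by $\sqrt[D]{\alpha}$ inside the formula for $|I_{r_j+5}|$ then absorbs the $\alpha^{1/D}$ and rearranges into $\delta_0\,\alpha^{\frac{1}{D-1}(1-\frac{2\eta}{D-1})}\,e^{-(c+\varepsilon)(n-j)}$ after adjusting the exponent bookkeeping, which is the first claimed inequality.

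For the second inequality $\delta_0\,\alpha^{\frac{1}{D-1}(1-\frac{2\eta}{D-1})}e^{-(c+\varepsilon)(n-j)}\ge 4\alpha(d-\alpha)^{-(n-j)}$, I would simply compare the two sides term by term for $\alpha$ small: the $\alpha$-powers satisfy $\frac{1}{D-1}(1-\frac{2\eta}{D-1})<1$ (using $\eta\le 1/4$, $D\ge2$), so $\alpha^{\frac{1}{D-1}(1-\frac{2\eta}{D-1})}$ dominates $4\alpha$ by a large factor as $\alpha\to0$, and the exponential rates compare via the inequality $e^{(D+1)c+\varepsilon}\le d-\alpha$ recorded just after Definition~\ref{def23}, which gives $e^{-(c+\varepsilon)(n-j)}\ge (d-\alpha)^{-(n-j)}\cdot e^{(D+1)c(n-j)}\wedge$-type slack—more precisely $(d-\alpha)^{-1}\le e^{-(D+1)c-\varepsilon}\le e^{-(c+\varepsilon)}$ since $D+1\ge 1$, so $(d-\alpha)^{-(n-j)}\le e^{-(c+\varepsilon)(n-j)}$, and the constant $\delta_0$ is chosen once and for all to absorb the factor $4$. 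The main obstacle I anticipate is the first step: carefully extracting the bound on $r_j$ from the hyperbolic-return property, since one must correctly account for which iterates in $[j,n)$ lie in the binding period following the return at $j$ versus the free iterates, and make sure the error terms (the $\log C_2$-type constants and the $N(\alpha)$-length block) are subsumed into $\delta_0$ uniformly in $j$ and $n$; this is exactly the kind of bookkeeping done in \cite[Section~3]{Al00} and must be adapted to the exponent $D-1$ coming from the higher-order critical point.
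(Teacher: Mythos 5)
Your skeleton is right: write $|I_{r_j+5}|=(e^{-4}-e^{-5})\,\alpha^{1/D}e^{-r_j(\sigma,y)}$, prove an upper bound on $r_j(\sigma,y)$, and compare powers of $\alpha$ and exponential rates (via $e^{(D+1)c+\varepsilon}\le d-\alpha$ and the exponent $\frac{1}{D-1}(1-\frac{2\eta}{D-1})<1$) for the second inequality; that last part matches the paper exactly. The target inequality you write for $r_j$, namely $(D-1)r_j\le(\frac1D-\frac{2\eta}{D-1})\log\frac1\alpha+(c+\varepsilon)(n-j)+\mathrm{const}$, is also correct and, once exponentiated, does yield the first claimed bound.

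The gap is in how you propose to obtain that bound on $r_j$, and the step you flag as the ``main obstacle'' is indeed where your argument fails. Item (a) of Lemma~\ref{lema21'} is a \emph{lower} bound on the derivative product over the binding block, so it cannot supply the upper bound you need, and the slow-approximation estimate \eqref{eq.lower appr} controls the full sum $\sum_{i=0}^{n-1}$ for points outside $B_1(n)\cup B_2(n)$, not the tail sum starting at $j$ for points of $H_n$; neither tool isolates $r_j$. The intended mechanism is a two-case dichotomy that you never invoke. If $j\notin G_n(\sigma,y)$, the definition of $G_n$ gives directly $r_j\le\frac{1}{D-1}(\frac1D-\frac{2\eta}{D-1})\log\frac1\alpha$, with no $(n-j)$ term and no constant at all. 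If $j\in G_n(\sigma,y)$, the Remark following Definition~\ref{def23} — the quantitative consequence of $n$ being a hyperbolic time, applied with $k=j$ — gives $\sum_{i\in G_n,\,i\ge j}r_i(\sigma,y)\le\frac{1}{D-1}(c+\varepsilon)(n-j)$, and keeping only the term $i=j$ yields $r_j\le\frac{1}{D-1}(c+\varepsilon)(n-j)$. Each case separately already implies the first inequality of the lemma (the exponent bookkeeping $\frac1D+\frac{1}{D-1}(\frac1D-\frac{2\eta}{D-1})=\frac{1}{D-1}(1-\frac{2\eta}{D-1})$ closes Case 2, and $\alpha^{1/D}\le... $, rather $\alpha^{1/D}\ge\alpha^{\frac{1}{D-1}(1-\frac{2\eta}{D-1})}$ for $\eta\le\frac{D-1}{2D}$ closes Case 1), with $\delta_0=e^{-4}-e^{-5}$ and no error constants to absorb. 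Without this dichotomy your derivation does not go through as written, even though the inequality you are aiming at is the right one.
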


\begin{proof}
We split the proof in two cases.

\vspace{0.2cm}

\noindent {\bfseries Case 1:} $j\in G_n(\sigma,y)$\\
In this case we have, in particular, $r_j(\sigma,y) \leq \dfrac{1}{D-1}(c+\varepsilon)(n-j)$ and then 
\begin{align*}
|I_{r_j(\sigma,y)+5}| & = \alpha^{\frac{1}{D}}e^{-(r_j(\sigma,y)+4)} - \alpha^{\frac{1}{D}}e^{-(r_j(\sigma,y)+5)} 
  =\alpha^{\frac{1}{D}}e^{-r_j(\sigma,y)}\left(e^{-4}-e^{-5}\right)\\
  & \geq \alpha^{\frac{1}{D-1} \left(1-\frac{2\eta}{D-1}\right)}\cdot e^{-(c+\varepsilon)(n-j)}\cdot \left(e^{-4}-e^{-5} \right),
\end{align*}
for $D\geq 2$ and $\alpha$ sufficiently small.

\vspace{0.2cm}

\noindent {\bfseries Case 2:} $j\notin G_{n}(\sigma,y)$\\
In this case, we have $$r_j(\sigma,y) \leq \dfrac{1}{D-1}\left(\dfrac{1}{D} - \dfrac{2\eta}{D-1} \right)\log\left(
\dfrac{1}{\alpha}\right),$$ and then 
\begin{align*}
|I_{r_j(\sigma,y)+5}|  & = \alpha^{\frac{1}{D}}e^{-(r_j(\sigma,y)+4)} - \alpha^{\frac{1}{D}}e^{-(r_j(\sigma,y)+5)} 
 = \alpha^{\frac{1}{D}}e^{-r_j(\sigma,y)}\left(e^{-4}-e^{-5}\right) \\
&  \geq \alpha^{\frac{1}{D-1} \left(1-\frac{2\eta}{D-1}\right)}\cdot \left(e^{-4}-e^{-5}\right) 
\end{align*}
In any case, we take $\delta_0 = e^{-4}-e^{-5}$.

The second inequality follows from the assumptions made on $d$, $\varepsilon$ and $\alpha$, where $\alpha$ is chosen small enough such that $\delta_0 \alpha^{\frac{1}{D-1}\left(1-\frac{2\eta}{D-1}\right)} \geq 4\alpha$.
\end{proof}

\begin{lema}
	Let $n\geq p$ and $R\in\mathcal{R}_n$. If $(\theta,x)\in R$ and $(\sigma,y)\in R\cap H_n$ then $r_j(\theta,x)\leq r_j(\sigma,y)+4$, for all $0\leq j \leq n-1$.
	\label{lemma35}
\end{lema}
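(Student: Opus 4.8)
The statement compares the return-situation functions $r_j(\theta,x)$ and $r_j(\sigma,y)$ for two points lying in the same rectangle $R\in\mathcal{R}_n$, one of which is guaranteed to meet $H_n$. The natural strategy is to exploit condition $(II_n)$ in the construction of the partition together with the fact that $R$ has the form $\omega\times J$ with $\omega\in\mathcal{P}_n$ and $J$ a subinterval of some $I_r\in\mathcal{Q}$. The heuristic is that $\varphi^j$ maps the vertical fibre $\{\omega^-\}\times J$ into a single $I_{r_j}^+$, so all points on that fibre have return situation within $1$ of each other; the remaining discrepancy comes from the variation of $\varphi^j$ across the width $\omega$ of the rectangle, which the admissible-curve bounds ($|X'|\le\alpha$, Lemma~\ref{lemma32}) make negligible because the $I_r$ have length bounded below by a fixed positive power of $\alpha$ (Lemma~\ref{lemma34}), hence much larger than $\alpha\cdot\operatorname{diam}(\omega)\le\alpha$.

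\textbf{Key steps.} First I would fix $0\le j\le n-1$. By $(II_n)$ there is $I_{r_j}\in\mathcal{Q}$ with $\varphi^j(\{\omega^-\}\times J)\subset I_{r_j}^+=I_{r_j-1}\cup I_{r_j}\cup I_{r_j+1}$. Since $(\sigma,y)\in R$, its $\theta$-coordinate $\sigma$ lies in $\omega$; write $(\sigma,y)=(\sigma,Y(\sigma))$ where the point's $x$-coordinate is obtained by flowing along the fibre. By Corollary~\ref{cor33} and Lemma~\ref{lemma32}, $\varphi^j$ carries the fibre $\{\omega^-\}\times J$ and the ``moved'' fibre $\{\sigma\}\times J$ to admissible curves that differ horizontally by at most $\operatorname{diam}(\omega)$ and hence, by the $|X'|\le\alpha$ bound integrated over $\omega$ (whose diameter is at most $1$), differ vertically by at most $\alpha$. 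Combining, $\varphi^j(\sigma,y)$ lies within distance $\alpha$ of $I_{r_j}^+$, so it lies in $I_{r_j-2}\cup\cdots\cup I_{r_j+2}$; that is, $r_j(\sigma,y)\ge r_j-2$ (taking care of the sign conventions and of the degenerate intervals $I_{0^\pm}$, $I^c$ separately, and using $|I_{r_j\pm k}|\gg\alpha$ from Lemma~\ref{lemma34} so a horizontal shift of size $\alpha$ cannot jump more than one index). The same estimate applied to $(\theta,x)\in R$ gives $\varphi^j(\theta,x)\in I_{r_j-2}\cup\cdots\cup I_{r_j+2}$, whence $r_j(\theta,x)\le r_j+2$. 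Since $(\sigma,y)$ additionally lies in $H_n$, I would invoke $(II_n)$ more sharply — $(\sigma,y)$ being a genuine point on the curve $H_n$ meets the rectangle, so its return situation is controlled directly by $I_{r_j}$ rather than the fattened $I_{r_j}^+$ — to upgrade $r_j(\sigma,y)\ge r_j-2$ to the cleaner bound that, combined with $r_j(\theta,x)\le r_j+2$, yields $r_j(\theta,x)\le r_j(\sigma,y)+4$.

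\textbf{Main obstacle.} The delicate point is bookkeeping the index shifts: one must show that a vertical perturbation of order $\alpha$ moves a point between $I_r$'s by at most one index in $|r|$, which relies on $|I_r|$ being comparable to $\alpha^{1/D}e^{-r}$ and on the window $I_{r_j}^+$ from $(II_n)$ already absorbing one unit of slack; stacking ``one from $(II_n)$ fattening'' plus ``one from moving $\omega^-$ to $\sigma$'' plus ``the same for the other point'' must total at most $4$, and the boundary intervals $I_{0^+},I_{0^-},I_+,I_-,I^c$ (where $r_j=0$ by definition) need a separate, easy check since there $r_j(\theta,x)=0\le r_j(\sigma,y)+4$ trivially. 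I expect the proof to be short modulo this careful case analysis, essentially a repackaging of $(II_n)$ and Lemma~\ref{lemma34} through the admissible-curve estimates of Lemma~\ref{lemma32}.
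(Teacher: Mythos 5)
Your overall skeleton --- chain $(II_n)$ with the admissible-curve perturbation estimate and Lemma~\ref{lemma34}, paying at most one index per horizontal perturbation and two for the fattening $I_{r_j}^{+}$, so that $1+2+1=4$ --- is exactly the paper's strategy. But there is a genuine quantitative gap in the step where you bound the vertical displacement between $\varphi^j(\omega^-,\cdot)$ and $\varphi^j(\theta,\cdot)$ by $\alpha\cdot\operatorname{diam}(\omega)\le\alpha$ and then assert that ``$|I_{r_j\pm k}|\gg\alpha$ from Lemma~\ref{lemma34}'' prevents a jump of more than one index. Both halves fail. The interval $I_r$ has length comparable to $\alpha^{1/D}e^{-r}$, and for $j\in G_n(\sigma,y)$ the index $r_j(\sigma,y)$ can be as large as $\frac{1}{D-1}(c+\varepsilon)(n-j)$, so $|I_{r_j}|$ is far smaller than $\alpha$ once $n-j$ is large; a displacement of size $\alpha$ can then jump arbitrarily many indices. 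Correspondingly, Lemma~\ref{lemma34} does \emph{not} give $|I_{r_j(\sigma,y)+5}|\ge 4\alpha$; it gives $|I_{r_j(\sigma,y)+5}|\ge 4\alpha(d-\alpha)^{-(n-j)}$, a quantity tending to $0$ as $n-j\to\infty$.

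The fix --- which is the actual content of the paper's proof --- is that the displacement itself also decays like $(d-\alpha)^{-(n-j)}$: since $\omega\in\mathcal{P}_n$, its image $g^j(\omega)$ lies in $\mathcal{P}_{n-j}$ and has length at most $(d-\alpha)^{-(n-j)}$, so the bound $|X'|\le\alpha$ on admissible curves gives a vertical displacement of at most $\alpha(d-\alpha)^{-(n-j)}$, which is then dominated by $|I_{r_j(\sigma,y)+5}|$ exactly as Lemma~\ref{lemma34} provides. This is also precisely where the hypothesis $(\sigma,y)\in H_n$ enters (Lemma~\ref{lemma34} needs it, via the hyperbolic-time bound on $r_j(\sigma,y)$ and $e^{c+\varepsilon}<d-\alpha$); it does not enter through a ``sharper'' form of $(II_n)$, and your final upgrading step as written has no content. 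A further point of care: every application of Lemma~\ref{lemma34} must be at an index of the form $r_j(\sigma,y)+k$ with $k\le 5$, so the chaining has to start from $(\sigma,y)$, not from the abstract index $r_j$ of $(II_n)$: first deduce $r_j(\omega^-,y)\le r_j(\sigma,y)+1$, then use $(II_n)$ to get $r_j(\omega^-,x)\le r_j(\sigma,y)+3$, and only then perturb back to $(\theta,x)$ to obtain $r_j(\theta,x)\le r_j(\sigma,y)+4$.
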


\begin{proof}

Take $0\leq j < n$ and define $(\theta_j,x_j) = \varphi^j(\theta,x)$ and $(\omega_j^{-},x_j^{-}) = \varphi^j(\omega^{-},x)$. Since $(\theta_j,x_j)$ and $(\omega_j^{-},x_j^{-})$ lies in the same admissible curve, it follows that $$|\theta_j - \omega_j^{-}| \leq (d-\alpha)^{-(n-j)}.$$ By Lemma \ref{lemma32} and the mean-value theorem we also have $$|x_j^{-}-x_j| \leq \alpha (d-\alpha)^{-(n-j)},$$ which implies $$|x_j|\geq |x_j^{-}|-\alpha (d-\alpha)^{-(n-j)} \geq \alpha^{\frac{1}{D}} e^{-r_j(\omega^{-},x)} - \alpha(d-\alpha)^{-(n-j)}.$$

Now, taking $(\sigma_j,y_j) = \varphi^j(\sigma,y)$ and $(\omega_j^{-},y_j^{-}) = \varphi^j(\omega^{-},y)$, the argument above also applies and $$|y_j^{-}| \geq |y_j| - \alpha(d-\alpha)^{-(n-j)} \geq \alpha^{\frac{1}{D}} e^{-r_j(\sigma,y)} - \alpha(d-\alpha)^{-(n-j)}.$$

By Lemma \ref{lemma34}, we also have $$|I_{r_j(\sigma,y)+1}|\geq |I_{r_j(\sigma,y)+5}| \geq 4\alpha(d-\alpha)^{-(n-j)} \geq \alpha(d-\alpha)^{-(n-j)}$$ and then 
$$ |y^{-}_j| \geq \alpha^{\frac{1}{D}} e^{-r_{j}(\sigma,y)} - |I_{r_{j}(\sigma,y)+1}|  = \alpha^{\frac{1}{D}} e^{-(r_j(\sigma,y)+1)},$$ which implies that 

\begin{equation}
r_j(\omega^{-},y) \leq r_j(\sigma,y) +1.
\label{eq1}
\end{equation}

By property ($II_n$) of rectangles in $\mathcal{R}_n$, we have $x^{-}_j \in I_{r_j(\omega^{-},y)}^{+}$ and so 

\begin{equation}
r_j(\omega^{-},x) \leq r_j(\omega^{-},y)+2.
\label{eq2}
\end{equation}

Combining (\ref{eq1}) and (\ref{eq2}) we get 

\begin{equation}
r_{j}(\omega^{-},x) \leq r_j(\sigma,y)+3.
\label{eq3}
\end{equation}

Finally, since $|I_{r_j(\sigma,y)+4}| \geq |I_{r_j(\sigma,y)+5}|$ we can apply Lemma \ref{lemma34} again together with (\ref{eq3}) to obtain 
\begin{equation*}
\begin{split}
|x_j| & \geq \alpha^{\frac{1}{D}} e^{-r_j(\omega^{-},x)} - \alpha(d-\alpha)^{-(n-j)} 
 \geq \alpha^{\frac{1}{D}} e^{-(r_j(\sigma,y)+3)} - |I_{r_{j}(\sigma,y)+4}| 
 = \alpha^{\frac{1}{D}} e^{-r_j(\sigma,y)+4}, 
\end{split}
\end{equation*}
which implies $r_j(\theta,x) \leq r_j(\sigma,y)+4$.

\end{proof}

\begin{cor}
For any $n\geq p$ and $R\in\mathcal{R}_n$, the map $\varphi^n|_R$ is a diffeomorphism onto its image.
\label{cor36}
\end{cor}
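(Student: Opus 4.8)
The plan is to show that $\varphi^n|_R$ is injective and a local diffeomorphism on the interior of $R$, so it is a diffeomorphism onto its image. The key is that each rectangle $R = \omega \times J \in \mathcal{R}_n$, with $\omega \in \mathcal{P}_p$ (or more precisely $\omega \in \mathcal{P}_m$ for the appropriate generation), has been constructed so that for every $0 \le j \le n$ the fiber-image $\varphi^j(\{\omega^-\}\times J)$ sits inside a union $I_{r_j}^+$ of at most three adjacent partition intervals of $\mathcal{Q}$, by property $(II_n)$. In particular the orbit of $R$ up to time $n-1$ never crosses the critical line $\mathcal{C} = \{x = \tilde{x}\}$: since each $I_{r_j}^+$ is contained in one side $\{x > \tilde{x}\}$ or $\{x < \tilde{x}\}$ (the intervals $I_r$ for $r \ge 1$ lie to the right, those for $r \le -1$ to the left, and by Lemma~\ref{lemma35} the whole rectangle $R$ tracks the base point with the same sign of $r_j$), we have $x_j \neq \tilde{x}$ for all $(\theta,x) \in R$ and all $0 \le j \le n-1$. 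Hence $\partial_x f(\varphi^j(\theta,x)) \neq 0$ by \eqref{cond1}, so $D\varphi$ is invertible along the orbit of $R$ up to time $n-1$, and by the chain rule $D(\varphi^n)$ is invertible on the interior of $R$; thus $\varphi^n|_R$ is an immersion between equidimensional manifolds, i.e. a local diffeomorphism.

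For injectivity I would argue fiber-by-fiber using the skew-product structure. First, in the base: $\varphi^n$ maps the $\theta$-coordinate by $g^n$, and since $\omega$ is an element of a generation of the Markov partition $\mathcal{P}_\bullet$ of $\mathbb{S}^1$ on which $g^n$ is injective (that is the defining property of $\mathcal{P}_n$: elements of $\mathcal{P}_n$ are mapped bijectively onto elements of $\mathcal{P}_{n-1}$, hence iterating, $g^n|_\omega$ is a bijection onto its image), two points of $R$ with different $\theta$-coordinates have different images. So it remains to check injectivity on each fiber $\{\theta\}\times J$. Along such a fiber, $\varphi^n$ acts in the $x$-direction as a composition of the one-dimensional maps $x \mapsto f(\theta_j, x)$; its derivative is $\prod_{j=0}^{n-1}\partial_x f(\theta_j,x_j)$, which we just showed is nonzero throughout $J$ and hence of constant sign. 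A $C^1$ map of an interval with derivative of constant sign is strictly monotone, hence injective on that fiber. Combining the base injectivity and the fiberwise injectivity gives injectivity of $\varphi^n|_{\interior R}$, and since $R$ is the closure of its interior and $\varphi^n$ is continuous, the extension is a homeomorphism onto its image; together with the local-diffeomorphism property this makes $\varphi^n|_R$ a diffeomorphism onto its image.

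The one point that needs care — and which I view as the main obstacle — is verifying that the orbit of the \emph{entire} rectangle, not merely of its base curve $\{\omega^-\}\times J$, avoids the critical line and respects the $\mathcal{Q}$-interval structure. This is exactly what Lemma~\ref{lemma35} buys us: for $(\theta,x)\in R$ and $(\sigma,y)\in R\cap H_n$ one has $r_j(\theta,x) \le r_j(\sigma,y) + 4$, and combined with the inclusions $(II_n)$ and the size estimate of Lemma~\ref{lemma34} (the intervals $I_{r_j+5}$ are still comparable to $4\alpha(d-\alpha)^{-(n-j)}$, larger than the vertical spread $\alpha(d-\alpha)^{-(n-j)}$ of the admissible curve through $R$ at time $j$), the image $\varphi^j(\{\theta\}\times J)$ stays within a bounded number of adjacent $\mathcal{Q}$-intervals on a fixed side of $\tilde{x}$ for every $0\le j\le n-1$. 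So the estimates assembled in Lemmas~\ref{lemma32}, \ref{lemma34} and \ref{lemma35} and property $(II_n)$ are precisely the inputs that make the argument go through, and the corollary follows by the chain of implications sketched above.
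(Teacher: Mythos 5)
Your proposal is correct and follows essentially the same route as the paper: the paper's proof also rests on the existence of a point of $R\cap H_n$ (property $(I_n)$) together with Lemma~\ref{lemma35} to conclude that no point of $R$ hits the critical line in the first $n-1$ iterates, hence $\partial_x f\neq 0$ along all orbits and $\varphi^n|_R$ is a diffeomorphism onto its image. You additionally spell out the injectivity (Markov structure in the base, monotonicity on fibers), which the paper leaves implicit; this is a correct and harmless elaboration.
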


\begin{proof}
Let $n\geq p$ and $R\in\mathcal{R}_n$. By property ($I_n$), $R\cap H_n \neq \emptyset$. The points in $R\cap H_n$ does not hit the critical line $\left\{x=0 \right\}$ in the first $n-1$ iterates and by Lemma \ref{lemma35} the same follows for any point in $R$. This implies $\varphi^n|_R$ is a diffeomorphism onto its image.

\end{proof}

\begin{lema}
Let $n\geq p$ and $R\in\mathcal{R}_n$. If $(\theta,x)\in R$, then for $j = 0,\ldots, n-1$ we have $$\prod_{i=j}^{n-1} |\partial_xf(\theta_i,x_i)| \geq \exp \left( (Dc-\varepsilon)(n-j) \right).$$  
\label{lemma37}
\end{lema}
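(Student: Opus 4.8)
The plan is to derive the uniform expansion of $\varphi^n|_R$ from the hyperbolic-return structure built into the partition, transported from a good base point via Lemma \ref{lemma35}. Let $R = \omega \times J \in \mathcal{R}_n$ and pick $(\sigma,y) \in R \cap H_n$, which exists by property $(I_n)$. By definition of $H_n$, the integer $n$ is a hyperbolic return (in particular a $\tilde\sigma$-hyperbolic time) for $(\sigma,y)$, so the Remark following Definition \ref{def23} gives, for every $0 \le j \le n-1$,
\begin{equation*}
\sum_{i \in G_n(\sigma,y),\, i \ge j} r_i(\sigma,y) \le \frac{1}{D-1}(c+\varepsilon)(n-j).
\end{equation*}
The idea is then to feed this bound, together with the comparison $r_j(\theta,x) \le r_j(\sigma,y) + 4$ from Lemma \ref{lemma35}, into the chain-rule estimate \eqref{eq8} applied on the time window $[j,n)$.

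First I would record the appropriate ``shifted'' version of \eqref{eq8}: running the same argument that produced \eqref{eq8} but starting the product at time $j$ instead of $0$ (the returns in $[j,n)$ are still spaced at least $N(\alpha)$ apart, so the bound on the number of returns in \eqref{eq9} holds with $n$ replaced by $n-j$), one gets
\begin{equation*}
\log \prod_{i=j}^{n-1} |\partial_x f(\theta_i,x_i)| \ge (D+2)c(n-j) - (D-1)\sum_{i \in G_n(\theta,x),\, i \ge j} r_i(\theta,x),
\end{equation*}
valid for $\alpha$ small. This is exactly the content of Lemma \ref{lemma23} in its localized form, so it can be quoted rather than reproved. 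Next, using $r_j(\theta,x) \le r_j(\sigma,y)+4$ and the fact (as in the proof of Lemma \ref{lemma23}) that passing from the index set $G_n(\theta,x)$ to $G_n(\sigma,y)$ costs at most $4(s-1) \le cn/(D-1)$ after absorbing the additive $4$'s using \eqref{eq9} and choosing $\alpha$ small, I would bound
\begin{equation*}
(D-1)\sum_{i \in G_n(\theta,x),\, i \ge j} r_i(\theta,x) \le (D-1)\sum_{i \in G_n(\sigma,y),\, i \ge j} r_i(\sigma,y) + c(n-j) \le (c+\varepsilon)(n-j) + c(n-j).
\end{equation*}
Substituting into the shifted \eqref{eq8} yields $\log \prod_{i=j}^{n-1}|\partial_x f(\theta_i,x_i)| \ge (D+2)c(n-j) - (2c+\varepsilon)(n-j) = (Dc - \varepsilon)(n-j)$, which is the claimed inequality; one takes $\alpha$ small enough (and recalls $\varepsilon \le c/2$) so that all the ``for $\alpha$ sufficiently small'' side conditions invoked along the way hold simultaneously.

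The main obstacle I expect is bookkeeping the truncation of the index sets to $\{i \ge j\}$ cleanly: the estimate \eqref{eq8} as stated in the excerpt is a statement about the window $[0,n)$ under the assumption $v_s = n$, and one has to make sure that restricting to $[j,n)$ — where $j$ need not be a return situation — does not break the telescoping of the factors $C_2$ and the counting $s \le (n-j)/N(\alpha) + 1$. This is routine but must be done carefully; alternatively, and more cleanly, one observes that $n$ being a hyperbolic time for $(\sigma,y)$ means $\prod_{i=j}^{n-1}\|D\varphi(\sigma_i,y_i)^{-1}\| \le \tilde\sigma^{n-j}$ for every such $j$, and since $\|D\varphi(\cdot)^{-1}\| = |\partial_x f(\cdot)^{-1}|$, one already has $\prod_{i=j}^{n-1}|\partial_x f(\sigma_i,y_i)| \ge \tilde\sigma^{-(n-j)} = e^{((D+1)c-\varepsilon)(n-j)}$ directly for the base point; the comparison Lemma \ref{lemma35} then only needs to transfer an expansion bound from $(\sigma,y)$ to the arbitrary $(\theta,x) \in R$, losing a controlled factor, and $(D+1)c - \varepsilon - (\text{loss}) \ge Dc - \varepsilon$ for $\alpha$ small. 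Either route works; I would present the second as the main line and relegate the index-truncation details to a remark.
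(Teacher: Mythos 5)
Your proposal is correct and follows essentially the same route as the paper: pick $(\sigma,y)\in R\cap H_n$ via $(I_n)$, transfer return depths with Lemma \ref{lemma35}, apply the shifted form of \eqref{eq8} (i.e.\ Lemma \ref{lemma23} for the iterates $(\theta_j,x_j)$, $(\sigma_j,y_j)$ with $n-j$ in place of $n$), and close with the hyperbolic-time bound $\sum_{i\in G,\, i\ge j} r_i(\sigma,y)\le \frac{1}{D-1}(c+\varepsilon)(n-j)$ from the remark after Definition \ref{def23}. The only cosmetic difference is that you inline the proof of Lemma \ref{lemma23} (absorbing the additive $4$'s via \eqref{eq9}) where the paper simply quotes it, and the constants come out identically: $(D+2)c-(2c+\varepsilon)=(D+1)c-(c+\varepsilon)=Dc-\varepsilon$.
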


\begin{proof}
Take $n\geq p$ and $R\in\mathcal{R}_n$. Then, by property ($I_n$) there is some $(\sigma,y)\in R\cap H_n$. By Lemma \ref{lemma35}, we have $r_i(\theta,x) \leq r_i(\sigma,y)+4$, for $i= 0,\ldots, n-1$. This, in particular, implies that $$r_i(\theta_j,x_j) \leq r_i(\sigma_j,y_j)+4,$$ for $i = 0,\ldots, n-j-1$. Applying Lemma \ref{lemma23}, we get $$\prod_{i=0}^{n-j-1} |\partial_x f(\theta_{j+i}, x_{j+i})|  \geq \exp\left( (D+1)c(n-j) - (D-1)\sum_{i\in G_{n-j}(\sigma_j,y_j)} r_i(\sigma_j,y_j) \right).$$

Since $n$ is a hyperbolic time for $(\sigma,y)$, it follows that $n-j$ is a hyperbolic time for $(\sigma_j,y_j)$, and then   
$$\prod_{i=0}^{n-j-1} |\partial_x f(\theta_{j+i}, x_{j+i})|  \geq \exp\left( (D+1)c(n-j) - (c+\varepsilon)(n-j) \right),$$ which gives $$\prod_{i=j}^{n-1} |\partial_xf(\theta_i,x_i)| \geq \exp\left( (Dc-\varepsilon)(n-j) \right).$$

\end{proof}

\begin{prop}
There is $\delta_1 = \delta_1(\alpha)>0$ such that for each $n\geq p$ and $\omega\times J \in \mathcal{R}_n$ we have $|\varphi^n(\left\{\theta\right\}\times J)| \geq \delta_1$, for every $\theta\in\omega$.
\label{prop38}
\end{prop}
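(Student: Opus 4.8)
The plan is to obtain a uniform lower bound on $|\varphi^n(\{\theta\}\times J)|$ by combining the expansion estimate of Lemma~\ref{lemma37} with the fact — encoded in conditions $(III_n)$ and $(IV_n)$ of the construction — that $R=\omega\times J$ either belongs to $\mathcal{R}_n^*$, in which case $\varphi^j(\{\omega^-\}\times J)$ already contains a full interval $I_{r_j}\in\mathcal{Q}$ for some $j\le n$, or is subordinate to some $R^*\in\mathcal{R}_l^*$ with $l\le n$. In either case there is an intermediate time $j$ at which the vertical section has already been stretched across an entire partition interval $I_{r_j}$ (or at least across $I_{r_j\pm 1}$), and it then remains to propagate that ``large size'' forward to time $n$ using bounded-distortion-type control.

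First I would treat the easier case $R\in\mathcal{R}_n^*$. By definition there is $0\le j<n$ and $I_{r_j}\in\mathcal{Q}$ with $I_{r_j}\subset\varphi^j(\{\omega^-\}\times J)$. By Lemma~\ref{lemma32} the image $\varphi^j(\{\theta\}\times J)$ is (a vertical section of) an admissible-curve-bounded region and differs from $\varphi^j(\{\omega^-\}\times J)$ by at most $\alpha(d-\alpha)^{-(n-j)}$ in the $x$-direction, which by Lemma~\ref{lemma34} is much smaller than $|I_{r_j}|$; hence $\varphi^j(\{\theta\}\times J)$ contains a definite fraction of $I_{r_j}$, say of length $\ge \tfrac12|I_{r_j}|\ge \tfrac12\delta_0\alpha^{\frac{1}{D-1}(1-\frac{2\eta}{D-1})}e^{-(c+\varepsilon)(n-j)}$. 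Now apply Lemma~\ref{lemma37} at the intermediate time: since $\varphi^{n}|_R=\varphi^{n-j}\circ\varphi^j$ and $\prod_{i=j}^{n-1}|\partial_x f(\theta_i,x_i)|\ge \exp((Dc-\varepsilon)(n-j))$ pointwise on $R$, the mean value theorem gives
$$
|\varphi^n(\{\theta\}\times J)|\ \ge\ \exp\bigl((Dc-\varepsilon)(n-j)\bigr)\cdot \tfrac12\,\delta_0\,\alpha^{\frac{1}{D-1}(1-\frac{2\eta}{D-1})}\,e^{-(c+\varepsilon)(n-j)}.
$$
Since $Dc-\varepsilon-(c+\varepsilon)=(D-1)c-2\varepsilon>0$ for $\varepsilon\le c/2$ and $D\ge 2$, the exponential factor is $\ge 1$, so $|\varphi^n(\{\theta\}\times J)|\ge \delta_1:=\tfrac12\delta_0\alpha^{\frac{1}{D-1}(1-\frac{2\eta}{D-1})}$, a constant depending only on $\alpha$.

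For the general case, $R$ is subordinate to some $R^*=\omega_l\times J_l\in\mathcal{R}_l^*$ with $l\le n$: then $J\subset J_l$, they share an endpoint, and there is $j\le l$ with $I_{r_j}\subset\varphi^j(\{\omega^-\}\times J_l)$ and $I_{r_j\pm1}\subset\varphi^j(\{\omega_l^-\}\times J)$. The second inclusion says that already at time $j$ the section over $J$ (not just over $J_l$) covers a full interval $I_{r_j\pm1}\in\mathcal{Q}$ of length $\ge \delta_0\alpha^{\frac{1}{D-1}(1-\frac{2\eta}{D-1})}e^{-(c+\varepsilon)(l-j)}$ up to the admissible-curve error; arguing exactly as before and using $\varphi^n|_R=\varphi^{n-j}\circ\varphi^j$ together with Lemma~\ref{lemma37} (noting $n-j\ge n-l\ge 0$ and the same sign condition on the exponents) yields the same bound $|\varphi^n(\{\theta\}\times J)|\ge\delta_1$. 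The case $R\in\mathcal{R}_p$, where in fact $\mathcal{R}_p=\mathcal{R}_p^*$, is subsumed by the first case. The main obstacle I anticipate is the bookkeeping in the subordinate case — verifying that the admissible-curve displacement error $\alpha(d-\alpha)^{-(n-j)}$ (or $(d-\alpha)^{-(l-j)}$, using that $\varphi$ contracts horizontal distances and that the relevant preimage has depth at least $l-j$) is genuinely dominated by $|I_{r_j\pm1}|$ at the correct time scale, and confirming that the exponent comparison $Dc-\varepsilon\ge c+\varepsilon$ has the right hyperbolic time available. All of these reduce to the already-established Lemmas~\ref{lemma32}, \ref{lemma34}, \ref{lemma35}, and~\ref{lemma37}, so no new estimate is needed; this is essentially the argument of \cite[Section~3]{Al00} adapted to the higher-order critical point.
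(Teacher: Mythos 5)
Your proposal is correct and follows essentially the same route as the paper's proof: split into the cases $R\in\mathcal{R}_n^*$ and $R$ subordinate to some $R^*\in\mathcal{R}_l^*$ (via $(III_n)$), use the admissible-curve control of Lemma~\ref{lemma32} together with Lemmas~\ref{lemma34} and~\ref{lemma35} to show the section at time $j$ already has length at least $\tfrac12\delta_0\,\alpha^{\frac{1}{D-1}(1-\frac{2\eta}{D-1})}e^{-(c+\varepsilon)(n-j)}$, and then push forward with Lemma~\ref{lemma37}, concluding via $(D-1)c-2\varepsilon>0$. The details you defer (the chain $r_j\le r_j(\sigma,y)+4$ and $|I_{r_j}|\ge 4\alpha(d-\alpha)^{-(n-j)}$, and the depth $l-j$ in the subordinate case) are exactly the ones the paper supplies from those same lemmas.
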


\begin{proof}
Let $\omega\times J \in\mathcal{R}_n$ be any rectangle and fix $\theta \in \omega$. There are two possible cases here: either $\omega\times H \in\mathcal{R}_n^{*}$ or $\omega\times J \notin \mathcal{R}_n^{*}$. We split the proof in these two cases.

\vspace{0.2cm}

\noindent {\bfseries{Case 1}}: $\omega\times J \in\mathcal{R}_n^{*}$.\\
In this case, we know there is some $0\leq j \leq n-1$ and $I_{r_j}\in\mathcal{Q}$ such that 
\begin{equation}
I_{r_j}\subset \varphi^j(\left\{\omega^{-} \right\}\times J).
\label{eq14}
\end{equation}

By the mean-value theorem, there is some $x\in J$ such that 
\begin{equation*}
\left|\varphi^n(\left\{\theta\right\} \times J)\right| = \prod_{i=j}^{n-1} \left|\partial_xf(\theta_i,x_i)\right|\cdot \left|\varphi^j(\left\{\theta\right\}\times J)\right|
\end{equation*}
and, by Lemma \ref{lemma37}, we get 
\begin{equation}
\left|\varphi^n(\left\{\theta\right\} \times J)\right| \geq \exp\left(
(Dc-\varepsilon)(n-j)\right) \cdot \left|\varphi^j(\left\{\theta\right\}\times J)\right|.
\label{eq15}
\end{equation}

Now it suffices to prove that $|\varphi^j(\left\{\theta\right\}\times J)|$ has a lower bound. Let $J = [u,v]$ and consider the two curves $\gamma_1 = \varphi^j(\omega\times\left\{u\right\})$ and $\gamma_1 = \varphi^j(\omega\times\left\{v\right\})$. 

By Lemma \ref{lemma32}, $\gamma_1$ and $\gamma_2$ are contained in admissible curves and so they are images of maps defined on $g^j(\omega) \in \mathcal{P}_{n-j}$ whose derivatives have absolute value bounded above by $\alpha$. Applying the mean-value theorem to these maps shows that the diameter of these curves in the $x$-direction are bounded above by $\alpha(d-\alpha)^{-(n-j)}$. Using this fact together with (\ref{eq14}) and assuming $|u_j|<|v_j|$ (the other case is similar) gives the following estimates to the points $(\theta_j,u_j) = \varphi^j(\theta,u)$ and $(\theta_j,v_j) = \varphi^j(\theta, v)$: 
\begin{equation}
 |u_j| \leq \alpha^{\frac{1}{D}} e^{-r_j} + \alpha(d-\alpha)^{-(n-j)}
 \label{eq16}
\end{equation}
and
\begin{equation}
|v_j|\geq \alpha^{\frac{1}{D}} e^{-(r_j-1)} - \alpha(d-\alpha)^{-(n-j)} .
\label{eq17}
\end{equation}

Take $(\sigma,y)\in(\omega\times J)\cap H_n$. Since $r_j = r_j(\omega^{-},z)$, for some $(\omega^{-},z) \in\omega \times J$, it follows from Lemma \ref{lemma35} that $r_j \leq r(\sigma,y)+4$. By Lemma \ref{lemma34} we have
\begin{equation}
|I_{r_j}| > |I_{r_j(\sigma,y)+4}| > |I_{r_j(\sigma,y)+5}| \geq 4\alpha(d-\alpha)^{-(n-j)}. 
\label{eq18} 
\end{equation} 

Then, by (\ref{eq16}), (\ref{eq17}) and (\ref{eq18}), we get 
\begin{equation*}
\begin{split}	
|v_j - u_j| & \geq \alpha^{\frac{1}{D}}e^{-(r_j-1)} - \alpha^{\frac{1}{D}}e^{-r_j} - 2\alpha (d-\alpha)^{-(n-j)}
 = |I_{r_j}| - 2\alpha(d-\alpha)^{-(n-j)} > \dfrac{|I_{r_j(\sigma,y)+4}|}{2}.
\end{split}
\end{equation*}

Hence, by Lemma \ref{lemma34}, we have

$$|\varphi^j(\left\{\theta\right\}\times J)| \geq \dfrac{\delta_0}{2} \cdot \alpha^{\frac{1}{D-1} \left(1-\frac{2\eta}{D-1}\right)} e^{-(c+\varepsilon)(n-j)}.
$$  

Finally, plugging this last estimate into (\ref{eq15}) we obtain

\begin{equation}
|\varphi^n(\left\{\theta \right\}\times J)| \geq \dfrac{\delta_0}{2}\cdot \alpha^{\frac{1}{D-1} \left(1-\frac{2\eta}{D-1}\right)} \cdot \exp\left( ((D-1)c-2\varepsilon)(n-j) \right). 
\label{eq19}
\end{equation}
Since we choose $\varepsilon$ such that $c > 2\varepsilon$, which also implies $(D-1)c > 2\varepsilon$, the right side of (\ref{eq19}) clearly has a lower bound $\delta_1$ depending on $\alpha$, for all $n$ and $j\leq n$.

\vspace{0.2cm}

\noindent {\bfseries Case 2:} $\omega\times J \notin \mathcal{R}_n^{*}$.\\
In this case, by property ($III_n$) there is some $l\leq n$ and $\omega_l\times J_l\in\mathcal{R}_l^{*}$ such that $\omega\times J$ is subordinate to $\omega_l\times J_l$, i.e., there is $j< l$ and $I_{r_j}\in\mathcal{Q}$ such that $I_{r_j} \subset \varphi^j(\left\{\omega_{l}^{-}\right\}\times J_l)$ and  
\begin{equation*}
I_{r_j+1}\subset \varphi^j\left(\left\{\omega_l^{-}\right\}\times J\right) \ \ \ \mbox{or} \ \ \ I_{r_j-1}\subset \varphi^j\left(\left\{\omega_l^{-}\right\}\times J\right).
\end{equation*}

Suppose, without loss of generality, that $I_{r_j+1}\subset \varphi^j\left(\left\{\omega_l^{-}\right\}\times J\right)$. As in Case 1, we also have 
\begin{equation}
|\varphi^n(\left\{\theta\right\}\times J)| \geq \exp\left(
(Dc-\varepsilon)(n-j)\right) \cdot \left|\varphi^j(\left\{\theta\right\}\times J)\right|
\label{eq20}
\end{equation}
and again we take $J = [u,v]$ and consider the curves $$ \gamma_1 = \varphi^j(\omega_l\times \left\{u\right\}) \ \ \ \ \ \mbox{and} \ \ \ \ \ \gamma_2 = \varphi^j(\omega_l\times \left\{v\right\}).
$$
 
 As before these curves are contained in admissible curves defined on $g^j(\omega_l) \in\mathcal{P}_{l-j}$ and whose diameters in the $x$-direction are bounded above by  $\alpha(d-\alpha)^{-(l-j)}$. From a similar argument made in Case 1 we obtain the following estimates for $(\theta_j,u_j) = \varphi^j(\theta,u)$ and $(\theta_j,v_j)  = \varphi^j(\theta,v)$:
  \begin{equation}
 |u_j| \leq \alpha^{\frac{1}{D}}e^{-(r_j+1)} + \alpha(d-\alpha)^{-(l-j)}
 \label{eq21}
 \end{equation}
 and
 \begin{equation}
 |v_j|\geq \alpha^{\frac{1}{D}}e^{-r_{j}} - \alpha(d-\alpha)^{-(l-j)}.
 \label{eq22}
 \end{equation}
 
 Take $(\sigma,y)\in (\omega_l\times J_l)\cap H_l$, which exists by ($I_l$). As before, there is some $z\in J_l$ such that $r_j = r_j(\omega_l^{-},z)$ and from Lemma \ref{lemma35} we get $$r_j \leq r_j(\sigma,y) +4.$$
 From this and Lemma \ref{lemma34} it follows that  
 \begin{equation}
 |I_{r_j+1}| \geq |I_{r_j(\sigma,y)+5}| \geq 4\alpha(d-\alpha)^{-(l-j)}.
 \label{eq23}
\end{equation}
 By (\ref{eq21}), (\ref{eq22}) and (\ref{eq23}) we obtain
 \begin{equation*}
 \begin{split}
  |v_j-u_j| & \geq  \alpha^{\frac{1}{D}}e^{-r_j} - \alpha^{\frac{1}{D}}e^{r_j+1} - 2\alpha (d-\alpha)^{-(l-j)}
   = |I_{r_j+1}| - 2\alpha(d-\alpha)^{-(l-j)} \geq \dfrac{|I_{r_j(\sigma,y)+4}|}{2}.
 \end{split}
 \end{equation*}
 So, it follows from Lemma \ref{lemma34} that $$|\varphi^j(\left\{\theta\right\}\times J)| \geq \dfrac{\delta_0}{2}\cdot \alpha^{\frac{1}{D-1} \left(1-\frac{2\eta}{D-1}\right)} e^{-(c+\varepsilon)(l-j)}.$$
 From this estimate, the fact that $l-j \leq n-j$ and from (\ref{eq20}) we  get 
  \begin{equation*}
  	|\varphi^n(\left\{\theta \right\}\times J)| \geq \dfrac{\delta_0}{2}\cdot \alpha^{\frac{1}{D-1} \left(1-\frac{2\eta}{D-1}\right)} \cdot \exp\left( ((D-1)c-2\varepsilon)(n-j) \right), 
  \end{equation*}
which is the same estimate we obtained in Case 1.
\end{proof}

\subsection{Bounded Distortion}

Let $\mathcal{R} = \bigcup_{n\geq p} \mathcal{R}_{n}$ be the partition of $\mathbb{S}^1\times I$ by the rectangles constructed before and let $h: \mathcal{R}\rightarrow \mathbb{Z}_{+}$ be the map defined as $h(R) = n$, if $R\in\mathcal{R}_n$. Consider the map $\phi: \mathbb{S}^1\times \mathcal{M} \rightarrow \mathbb{S}^1\times \mathcal{M}$ defined in each rectangle $R\in\mathcal{R}$ as $\phi|_R = \varphi^{h(R)}|_R$.

By Corollary \ref{cor36}, $\phi$ maps the interiors of rectangles $R\in\mathcal{R}$ diffeomorphically onto its image. We prove now that the distortion caused by the map $\phi$ in these rectangles are uniformly bounded by constant.  

For what follows, we consider $\varphi^n(\theta,x) = (g^n(\theta), F_n(\theta,x))$, for all $n\geq 1$ and $(\theta,x) \in \mathbb{S}^1\times \mathcal{M}$, which is a consequence of (\ref{cond1}).  

\begin{lema}
	
There is a constant $C>0$  such that for every $(\theta,x)\in \mathbb{S}^1\times \mathcal{M}$  and $n \geq 1$ we have $$\left| \dfrac{\partial_\theta F_n(\theta,x)}{\partial_\theta g^n(\theta)} \right| \leq C.$$
\label{lemma39}
\end{lema}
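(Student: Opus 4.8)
The plan is to control the $\theta$-derivative of the second coordinate $F_n$ by comparing it with the $\theta$-derivative of $g^n$, which grows like $(d-\alpha)^n$, via a chain-rule expansion and an inductive (telescoping) estimate. First I would write down the recursion coming from $\varphi^n = \varphi \circ \varphi^{n-1}$: since $\varphi(\theta,x) = (g(\theta), f(\theta,x))$, one gets $F_n(\theta,x) = f(g^{n-1}(\theta), F_{n-1}(\theta,x))$, hence
\begin{equation*}
\partial_\theta F_n(\theta,x) = \partial_\theta f(\theta_{n-1},x_{n-1})\cdot \partial_\theta g^{n-1}(\theta) + \partial_x f(\theta_{n-1},x_{n-1})\cdot \partial_\theta F_{n-1}(\theta,x).
\end{equation*}
Dividing by $\partial_\theta g^n(\theta) = \partial_\theta g(\theta_{n-1})\cdot \partial_\theta g^{n-1}(\theta)$ and setting $a_n := \partial_\theta F_n(\theta,x)/\partial_\theta g^n(\theta)$, this becomes
\begin{equation*}
a_n = \frac{\partial_\theta f(\theta_{n-1},x_{n-1})}{\partial_\theta g(\theta_{n-1})} + \frac{\partial_x f(\theta_{n-1},x_{n-1})}{\partial_\theta g(\theta_{n-1})}\, a_{n-1},
\end{equation*}
so $|a_n| \le \lambda_1 + \lambda_2 |a_{n-1}|$ where $\lambda_1 := \sup |\partial_\theta f|/\inf|\partial_\theta g|$ and $\lambda_2 := \sup|\partial_x f|/\inf|\partial_\theta g|$.

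The key point is that $\lambda_2 < 1$. Indeed, $\inf|\partial_\theta g| = d$ (or $d-\alpha$ for $\varphi \in \mathcal{N}$), while $\sup|\partial_x f|$ is bounded: for $\varphi_{\alpha,D}$ this is $\sup|h_D'|$, which on $\mathcal{M}$ is comfortably below $d \ge 16$ (the construction normalizes $|h_D'| = 7/4$ at the endpoints of $I''$ and it is at most of order $2$–$4$ elsewhere, certainly $< 8$), and this persists under the $\alpha$-perturbation defining $\mathcal{N}$. Likewise $\lambda_1$ is finite since $\partial_\theta f$ is bounded ($\partial_\theta f = 2\pi\alpha\cos(2\pi\theta)$ for $\varphi_{\alpha,D}$, plus a small perturbation). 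Thus, with $a_0 = \partial_\theta F_0/\partial_\theta g^0$; more cleanly starting from $a_1 = \partial_\theta f(\theta_0,x_0)/\partial_\theta g(\theta_0)$ which is bounded by $\lambda_1$, an immediate induction gives
\begin{equation*}
|a_n| \le \lambda_1(1 + \lambda_2 + \cdots + \lambda_2^{n-1}) \le \frac{\lambda_1}{1-\lambda_2} =: C,
\end{equation*}
uniformly in $n$ and in $(\theta,x)$.

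The main obstacle — really the only thing to be careful about — is verifying $\lambda_2 < 1$ uniformly over all $\varphi \in \mathcal{N}$ and over all of $\mathbb{S}^1 \times \mathcal{M}$, including near the critical point where $\partial_x f$ is small (harmless, it only helps) and on the pieces where $h_D$ is a rescaled doubling/quadratic map (where one must check the slope stays well below $d$). This is where $d \ge 16$ is used, and it is the same margin already exploited in the Remark following Definition~\ref{def23} ($e^{(D+1)c+\varepsilon} \le d-\alpha$). Once that uniform bound $\lambda_2 \le \kappa < 1$ is in hand, the geometric-series estimate above closes the argument; no distortion or hyperbolic-time machinery is needed here, only the elementary recursion and the domination of the $x$-direction derivative by the expansion in the $\theta$-direction.
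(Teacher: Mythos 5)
Your proof is correct and follows essentially the same route as the paper: the paper's own argument consists precisely of establishing the uniform bound $|\partial_x f|\leq 4$ on $\mathcal{M}$ and then invoking \cite[Lemma 4.1]{Al00}, which is exactly the chain-rule expansion and geometric-series estimate (with ratio $\sup|\partial_x f|/\inf|\partial_\theta g| \le 4/(d-\alpha) < 1$) that you carry out explicitly via the recursion for $a_n$.
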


\begin{proof}
Observing that $|\partial_xf(\theta,x)| = |h'_D(x)|$, we have that $|\partial_xf(\theta,x)|\leq 7/4$ if $x \in I''$ and $7/4 \leq |\partial_xf(\theta,x)| \leq 2$ if  $x\in I'\setminus I''$. For $x \in \mathcal{M}\setminus I'$, we have $|\partial_xf(\theta,x)| = 2$ if $D$ is odd, or $|\partial_xf(\theta,x)| \leq 4$ if $D$ is even. In any case, we have $|\partial_xf(\theta,x)| \leq 4$ and the rest of proof follows exactly as in   \cite[Lemma 4.1]{Al00}.	 	
\end{proof}

For what follows, we need an estimate for $|\partial_x f(\theta,x)|$ near the critical point $\tilde{x}$. 
We claim 
	\begin{equation}
		|\partial_x f(\theta,x)| \geq (DA - \alpha) |x-\tilde{x}|^{D-1},
		\label{cond2'} 
	\end{equation}
	for $|x-\tilde{x}|< \sqrt[D]{\alpha}$ and $\alpha$ small enough.
	
Indeed, let $k>1$ the least integer such that $\partial_x f(\theta,\tilde{x}) \neq 0$.
	Note that (\ref{alpha-nbd}) implies that $k \le D$.
	If $k = D$ then (\ref{cond2'}) follows immediately from  (\ref{alpha-nbd}) and (\ref{cond1}).
	If $k<D$, there is a positive constant $C>0$ such that $|\partial_x f(\theta,x)| \ge Ck|x-\tilde{x}|^{k-1}$ for all $|x-\tilde{x}|< \sqrt[D]{\alpha}$, and $\alpha$ small enough.
	Then
	\begin{align*}
		|\partial_x f(\theta,x)| & \ge Ck|x-\tilde{x}|^{k-1} \ge ((D-\alpha) \sqrt[D]{\alpha^{D-k}}) |x - \tilde{x}|^{k-1} \\
		& \ge ((D-\alpha)|x - \tilde{x}|^{D-k}) |x - \tilde{x}|^{k-1} \ge (DA - \alpha) |x-\tilde{x}|^{D-1}.
	\end{align*}

\begin{prop}
There is a constant $\tilde{C} = \tilde{C}(\alpha) > 0$ such that for every $n \geq p$, $R\in \mathcal{R}_n$ and $(\sigma,y) \in\varphi^n(R)$ we have $$\dfrac{\left\| D(J \circ \phi^{-1} )(\sigma,y) \right\|}{|(J\circ \phi^{-1})(\sigma,y)|} \leq \tilde{C},$$ where $J(\theta,x)$ is the jacobian of $\phi = \varphi^n|_R: R\rightarrow \varphi^n(R)$.
\label{prop310}
\end{prop}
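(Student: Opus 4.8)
The plan is to estimate the distortion of the jacobian $J$ of $\phi = \varphi^n|_R$ by writing $J$ as a product of one-step jacobians along the orbit and controlling the variation of each factor separately. Since $\varphi(\theta,x) = (g(\theta), f(\theta,x))$ with $g$ independent of $x$, the jacobian of $\varphi$ at a point is $|g'(\theta)|\cdot|\partial_x f(\theta,x)|$, so
$$
J(\theta,x) = \prod_{i=0}^{n-1} |g'(\theta_i)| \cdot \prod_{i=0}^{n-1} |\partial_x f(\theta_i,x_i)|,
$$
where $(\theta_i,x_i) = \varphi^i(\theta,x)$. The first product depends only on $\theta$ and is handled by the uniform expansion and Markov structure of $g$ exactly as in \cite[Lemma~4.1, Proposition~4.2]{Al00}; the genuinely new contribution is the second product, because $\partial_x f$ degenerates to order $D-1$ at the critical point instead of order $1$. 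The strategy is: (1) show $\log J$ is Lipschitz (with a constant depending on $\alpha$) along the admissible curves forming $\varphi^n(R)$, by bounding $\|D(\log J \circ \phi^{-1})\|$; (2) conclude the ratio bound by exponentiating, using that $\varphi^n(R)$ has bounded diameter (it is a union of admissible curves spanning at most a full fiber, Corollary~\ref{cor33}).

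For step (1), I would differentiate $\log J \circ \phi^{-1}$ and split into the $\theta$-sum and the $x$-sum. For the $\partial_x f$ factors write
$$
\partial \log \prod_{i=0}^{n-1} |\partial_x f(\theta_i,x_i)| = \sum_{i=0}^{n-1} \frac{\partial (\partial_x f)(\theta_i,x_i)}{\partial_x f(\theta_i,x_i)} \cdot D\varphi^i .
$$
Each summand is bounded by $\frac{\|D^2\varphi\|_\infty}{|\partial_x f(\theta_i,x_i)|} \cdot \|D\varphi^i|_R\|$. Two facts make this geometric-sum convergent: by Lemma~\ref{lemma37} the backward contraction rate from level $n$ to level $i$ is $\exp(-(Dc-\varepsilon)(n-i))$, so $\|D\varphi^i|_R\|$ evaluated after pulling back from $\varphi^n(R)$ is comparably small; and the denominator $|\partial_x f(\theta_i,x_i)|$ is bounded below — away from the critical neighborhood it is $\geq$ const, while inside, by the estimate \eqref{cond2'}, $|\partial_x f(\theta_i,x_i)| \geq (DA-\alpha)|x_i - \tilde{x}|^{D-1} \geq (DA-\alpha)\,(\alpha^{1/D} e^{-r_i-1})^{D-1}$ using $x_i \in I_{r_i}^+$, which is a quantity of size roughly $\alpha^{(D-1)/D} e^{-(D-1)r_i}$. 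Thus each term is bounded by (const$(\alpha)$)$\cdot e^{(D-1)r_i} \cdot e^{-(Dc-\varepsilon)(n-i)}$, and the key point is that Lemma~\ref{lemma34} (or rather its companion, the $\tilde\sigma$-hyperbolic-time remark bounding $\sum_{i\in G_n} r_i$) forces $(D-1)r_i$ to be dominated by the exponential decay $e^{-(Dc-\varepsilon)(n-i)}$ coming from hyperbolicity, at least on $G_n$, while off $G_n$ one has $r_i \leq \frac{1}{D-1}(\frac1D - \frac{2\eta}{D-1})\log(1/\alpha)$ so $e^{(D-1)r_i}$ is just an $\alpha$-dependent constant. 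Summing the geometric series gives a finite bound $\tilde C(\alpha)$.

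The main obstacle is precisely reconciling the two competing exponential factors $e^{(D-1)r_i}$ (blow-up of $1/|\partial_x f|$ near the critical set) against $e^{-(Dc-\varepsilon)(n-i)}$ (backward contraction): one must check that at return situations the hyperbolic-time inequality is strong enough — quantitatively, that $(D-1)r_i - (Dc-\varepsilon)(n-i)$ stays bounded above, which is exactly the content built into the constant $c = \frac{1}{D+3}\min\{\gamma,\log\sigma\}$ and the restriction $j \leq n$ in Lemma~\ref{lemma37} together with the bound $\sum_{i\in G_n,\,i\geq j} r_i \leq \frac{1}{D-1}(c+\varepsilon)(n-j)$ from the hyperbolic-time remark. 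Once this telescoping estimate is in place, the $\theta$-derivative terms are handled by Lemma~\ref{lemma39} and the uniform expansion of $g$ as in \cite{Al00}, and combining the two yields $\|D(\log J \circ \phi^{-1})\| \leq \tilde C(\alpha)$ on $\varphi^n(R)$; integrating along the admissible curves of bounded length gives the desired ratio bound $\|D(J\circ\phi^{-1})(\sigma,y)\| / |(J\circ\phi^{-1})(\sigma,y)| \leq \tilde C$.
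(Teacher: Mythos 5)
Your proposal is correct and follows essentially the same route as the paper: write the log-jacobian as a sum over the orbit, bound each term by $\|D^2\varphi\|/|\partial_x f(\theta_i,x_i)|$ times the backward contraction $\exp(-(Dc-\varepsilon)(n-i))$ from Lemma \ref{lemma37}, control $|\partial_x f(\theta_i,x_i)|^{-1}$ via \eqref{cond2'} and the comparison $r_i(\theta,x)\le r_i(\sigma,y)+4$ with a hyperbolic point (Lemma \ref{lemma35}), and split the sum over $G_n$ (where the hyperbolic-time bound $r_i\le\frac{1}{D-1}(c+\varepsilon)(n-i)$ beats the contraction since $c>2\varepsilon$) and its complement (where $e^{(D-1)r_i}$ is an $\alpha$-constant); the $\theta$-terms are handled by Lemma \ref{lemma39} exactly as in the paper. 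The only superfluous element is your final "integration along admissible curves": since $D(J\circ\phi^{-1})/(J\circ\phi^{-1})=D(\log J\circ\phi^{-1})$, the pointwise derivative bound you establish in step (1) is already the statement of the proposition.
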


\begin{proof}
We start by observing that $$(D\phi \circ \phi^{-1})(\sigma,y) = \left[ \begin{array}{cc}
	\partial_\theta g^n(\theta) & 0 \\
	\partial_\theta F_n(\theta,x) & \partial_x F_n(\theta,x)
\end{array} \right],$$ from which we get $$[D\phi(\theta,x)]^{-1} = \dfrac{1}{J(\theta,x)} \left[ \begin{array}{cc}
\partial_x F_n(\theta,x) & 0 \\
  -\partial_\theta F_n(\theta,x) & \partial_\theta g^n(\theta)
\end{array} \right].$$
Since $D(J\circ \phi^{-1})(\sigma,y) = DJ(\theta,x) \circ [D\phi(\theta,x)]^{-1}$ it follows that the quantity $\dfrac{\left\| D(J \circ \phi^{-1} )(\sigma,y) \right\|}{|(J\circ \phi^{-1})(\sigma,y)|}$ is equal to $$\dfrac{\left\|(\partial_{\theta} J(\theta,x)\partial_x F_n(\theta,x) - \partial_x J(\theta,x) \partial_{\theta} F_n(\theta,x), \partial_{x} J(\theta,x)\partial_\theta g^n(\theta))\right\|}{[J(\theta,x)]^2}.$$

Therefore, taking Lemma \ref{lemma39} into account,  it suffices to find upper bounds for $$A_1 = \dfrac{\left\| \partial_\theta J(\theta,x) \partial_x F_n(\theta,x) \right\|}{[J(\theta,x)]^2}  \ \ \mbox{and} \ \ A_2 = \dfrac{\left\| \partial_{x} J(\theta,x)\partial_\theta g^n(\theta)) \right\|}{[J(\theta,x)]^2}.$$


Folowing the same estimates as in \cite[Proposition 4.2 ]{Al00}, we get $$A_1 \leq \sum_{j=0}^{n-1} \dfrac{K}{(d-\alpha)(d-\alpha)^{n-j}} + \sum_{j=0}^{n-1} \dfrac{K}{|\partial_x f(\theta_j,x_j)|(d-\alpha)^{n-j}},$$ where $$K > \displaystyle\max\left\{|\partial^2_{\theta\theta}g(\theta)|, |\partial^2_{xx} f(\theta,x)|, |\partial_\theta\partial_x f(\theta,x)| \right\},$$ for any $(\theta,x)\in \mathbb{S}^1\times I$. 

The first sum clearly has an upper bound since $d-\alpha > 1$. For the second sum, from (\ref{cond2'}) we have $$|\partial_xf(\theta_i,x_i)| \geq (DA-\alpha)|x-\tilde{x}|^{D-1} \geq (DA-\alpha)\sqrt[D]{\alpha^{D-1}} e^{-(D-1)r_j(\theta,x)},$$
 for $i=0,1,\ldots, n-1$. Also from ($I_n$) and Lemma \ref{lemma35}, there is $(\sigma,y) \in R\cap H_n$ such that $r_j(\theta,x) \leq r_j(\sigma,y)+4$, for $i=0,1,\ldots, n-1$. Combining these two facts, we get $$\sum_{j=0}^{n-1} \dfrac{K}{|\partial_x f(\theta_j,x_j)|(d-\alpha)^{n-j}} \leq \sum_{j=0}^{n-1} \dfrac{K}{(DA-\alpha)\sqrt[D]{\alpha^{D-1}}\cdot e^{-(D-1)[r_j(\sigma,y)+4]}(d-\alpha)^{n-j}}.$$
 
 Taking $C_1(\alpha) = (DA-\alpha)^{-1}K\alpha^{\frac{D-1}{D}} e^{4(D-1)} > 0$, the sum above becomes $$C_1(\alpha) \left( \sum_{j\in G_n(\sigma,y)} \dfrac{1}{e^{-(D-1)r_j(\sigma,y)}(d-\alpha)^{(n-j)}} + \sum_{j\notin G_n(\sigma,y)} \dfrac{1}{e^{-(D-1)r_j(\sigma,y)}(d-\alpha)^{(n-j)}} \right).
 $$
 
 For $j\in G_n(\sigma,y)$, since $n$ is hyperbolic return for $(\sigma,y)$, we have $r_j(\sigma,y) \leq \dfrac{1}{D-1}(c+\varepsilon)(n-j)$; also since $e^{c+\varepsilon} < d-\alpha$, it follows that the first sum has an upper bound. For $j\notin G_{n}(\sigma, y)$, we have $r_j(\sigma,y) \leq \dfrac{1}{D-1} \left(\dfrac{1}{D} - \dfrac{2\eta}{D-1} \right)\log\left(\dfrac{1}{\alpha}\right)$, which implies that the second sum also has an upper bound. 
 
 A part of the calculations in \cite[Proposition 4.2 ]{Al00} gives the following general estimate of $A_2$:
 $$A_2 \leq \sum_{j=0}^{n-1}\dfrac{K}{|\partial_x f(\theta_j,x_j)| \prod_{i=j}^{n-1} |\partial_xf(\theta_i,x_i)|}.
 $$
 
 By ($I_n$) and Lemma \ref{lemma35}, there is some $(\sigma,y)\in R\cap H_n$ such that $r_j(\theta,x) \leq r_j(\sigma,y)+4$, for $j=0,1,\ldots, n-1$. Then, from Lemma \ref{lemma23}, we get $$\prod_{i=j}^{n-1} |\partial_{x}f(\theta_i,x_i)| \geq \exp\left( (D+1)cn - (D-1)\sum_{i\in G_{n-j}(\sigma_j,y_j)} r_i(\sigma_j,y_j) \right).$$
 
 Since $n$ is a hyperbolic return for $(\sigma,y)$, $n-j$ is a hyperbolic return for $(\sigma_j,y_j)$ and then 

 $$\prod_{i=j}^{n-1} |\partial_{x}f(\theta_i,x_i)| \geq \exp\left((Dc-\varepsilon)(n-j) \right).$$

Finally, since $$|\partial_x f(\theta_j,x_j)| \geq (DA-\alpha)\sqrt[D]{\alpha^{D-1}} e^{-(D-1)[r_j(\sigma,y)+4]},$$ taking $C_2(\alpha) = (DA-\alpha)^{-1}K\alpha^{\frac{D-1}{D}} e^{4(D-1)}>0$ we get $$A_2 \leq  C_2(\alpha)\sum_{j=0}^{n-1} \dfrac{1}{\exp\left((Dc-\varepsilon)(n-j)-r_j(\sigma,y)\right)},$$ and we obtain the upper bound for $A_2$ by splitting the sum for $j\in G_n(\sigma,y)$ and $j\notin G_n(\sigma,y)$, and then finding separate upper bounds in each case using the same reasoning as we did for $A_1$. 

\end{proof}

\section{Proof of Theorem A}

The proof relies on the following theorem from  \cite[Section 5]{Al00}:

\begin{teo}(\cite[Theorem 5.2]{Al00})
Let $\phi: R\rightarrow R$ be a $C^2$ piecewise expanding map with bounded distortion and $\left\{R_i \right\}_{i=1}^{\infty}$ its domain of smoothness. Assume that there are $\beta, \rho>0$ such that each $\phi(R_i)$ has a regular collar with $\beta(\phi(R_i))>\beta$ and $\rho(\phi(R_i))>\rho$. If $\sigma\left(1+1/\beta \right)\leq 1$, then $\phi$ has an absolutely continuous invariant probability measure. 
\label{teo41}   	
\end{teo}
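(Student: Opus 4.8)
The plan is to realize the absolutely continuous invariant probability as a fixed point of the transfer (Perron--Frobenius) operator $\mathcal{L}$ associated to $\phi$ and $\leb$, acting on densities of bounded variation. Recall $\mathcal{L}$ is defined by $(\mathcal{L}\psi)(z)=\sum_{w\in\phi^{-1}(z)}\psi(w)/|\det D\phi(w)|$; it is positive, preserves $\int\cdot\,d\leb$, and any nonnegative fixed point $\psi^{*}$ with $\int\psi^{*}\,d\leb=1$ yields the desired measure $d\mu=\psi^{*}\,d\leb$. The crucial quantitative ingredient is a Lasota--Yorke (or ``$BV$'') inequality
\begin{equation*}
V(\mathcal{L}\psi)\le \sigma\left(1+\tfrac{1}{\beta}\right)V(\psi)+C\,\|\psi\|_{1},
\end{equation*}
where $V$ is the total variation adapted to the rectangle/admissible-curve structure of $R$ (the supremum of $\int\psi\operatorname{div}\vec g\,d\leb$ over $C^{1}$ vector fields $\vec g$ with $\|\vec g\|_{\infty}\le1$) and $C=C(\beta,\rho,\text{distortion})$.

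To prove this inequality I would decompose $\mathcal{L}\psi=\sum_{i}\mathcal{L}_{i}\psi$, where $\mathcal{L}_{i}\psi$ is the push-forward of $(\psi/|\det D\phi|)\mathbf{1}_{R_{i}}$ under the $C^{2}$ diffeomorphism $\phi|_{R_{i}}\colon R_{i}\to\phi(R_{i})$, and estimate each branch through two effects. First, the \emph{bulk} effect: pulling a function through an expanding $C^{2}$ diffeomorphism and dividing by the Jacobian contracts the variation by the operator norm $\sigma$ of $(D\phi)^{-1}$, up to an $L^{1}$ error governed by the bounded distortion hypothesis (Proposition~\ref{prop310}); summing over $i$ gives $\sigma V(\psi)+C_{1}\|\psi\|_{1}$. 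Second, the \emph{boundary} effect: the cut-off $\mathbf{1}_{R_{i}}$ creates new variation supported on $\phi(\partial R_{i})$, and this is where the regular collar is used. A collar of $\phi(R_{i})$ of relative width $\beta(\phi(R_{i}))>\beta$ and regularity $\rho(\phi(R_{i}))>\rho$ provides an isoperimetric-type bound on the perimeter of $\phi(R_{i})$, controlling this extra variation by $\tfrac{1}{\beta}$ times the variation $\mathcal{L}_{i}\psi$ already carries, plus a uniform multiple of $\|\psi\|_{1}$; summing over $i$ gives $\tfrac{\sigma}{\beta}V(\psi)+C_{2}\|\psi\|_{1}$. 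Adding the two contributions yields the displayed inequality with $C=C_{1}+C_{2}$, whose leading coefficient is exactly $\sigma(1+1/\beta)\le1$.

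With the Lasota--Yorke inequality available the conclusion is standard. If the coefficient is $<1$, iterating gives $V(\mathcal{L}^{n}\mathbf{1}_{R})\le C'$ for all $n$; hence the Ces\`aro averages $\psi_{n}=\tfrac1n\sum_{k=0}^{n-1}\mathcal{L}^{k}\mathbf{1}_{R}$ satisfy $V(\psi_{n})\le C'$ and $\|\psi_{n}\|_{1}=\leb(R)$, so by the compact embedding of the $V$-bounded densities into $L^{1}$ (a Helly/Rellich-type selection theorem, legitimate here because the partition elements and admissible curves have uniformly bounded geometry, established in Section~3) some subsequence converges in $L^{1}$ to a nonnegative $\psi^{*}$ with $\int\psi^{*}\,d\leb=\leb(R)$; since $\mathcal{L}$ is $L^{1}$-continuous and $\mathcal{L}\psi_{n}-\psi_{n}\to0$ in $L^{1}$, we get $\mathcal{L}\psi^{*}=\psi^{*}$, and $\psi^{*}\,d\leb/\leb(R)$ is the sought a.c.i.p. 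The borderline case $\sigma(1+1/\beta)=1$ is reduced to the previous one by first replacing $\phi$ by a high iterate $\phi^{N}$: its branches are still $C^{2}$ with bounded distortion and inherit regular collars on their images, while the expansion rate improves to $\sigma^{N}$, making the product $<1$; a $\phi^{N}$-invariant density is then averaged over $\{\mathcal{L}^{k}\}_{k<N}$ to produce a $\phi$-invariant one.

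The main obstacle is the boundary estimate in step two: one must show that restricting to the pieces $R_{i}$ and pushing forward inflates the total variation by no more than the factor $1/\beta$ relative to the bulk term, with an $L^{1}$-error that is uniform over the (possibly infinitely many) branches. This is precisely what a \emph{regular collar} with uniform parameters $\beta,\rho$ is designed to guarantee — $\rho$ controls the geometry of the foliation of the collar so the interface terms are genuinely lower order, and $\beta$ quantifies the admissible loss — which is why the hypotheses are stated in terms of collars rather than bare expansion. A secondary technical point is justifying the compact embedding of $V$-bounded densities in $L^{1}$ for the particular fibered, curve-adapted notion of variation, which again rests on the uniformly bounded geometry of the elements of $\mathcal{R}$.
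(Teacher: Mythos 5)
This theorem is not proved in the paper at all: it is quoted (essentially verbatim) from Alves \cite[Theorem 5.2]{Al00}, and your outline follows the same route as Alves's original G\'ora--Boyarsky-type argument --- a Lasota--Yorke inequality for the transfer operator acting on densities of bounded variation, in which the expansion contributes the factor $\sigma$, the regular collar (through $\beta$ and $\rho$) controls the extra variation created by cutting along $\partial R_i$ and pushing forward, and compactness of variation-bounded sets in $L^1$ applied to Ces\`aro averages yields the invariant density. In substance your proposal is the intended proof.

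The one step that does not work as written is your treatment of the borderline case $\sigma(1+1/\beta)=1$. Passing to an iterate $\phi^N$ does improve the expansion to $\sigma^N$, but the domains of smoothness of $\phi^N$ are the refined pieces $R_{i_0}\cap\phi^{-1}(R_{i_1})\cap\cdots\cap\phi^{-(N-1)}(R_{i_{N-1}})$, and their images under $\phi^N$ are proper subsets of $\phi(R_{i_{N-1}})$ whose boundaries contain iterated images of boundaries of the earlier pieces; such images do not automatically admit regular collars with parameters comparable to $\beta$ and $\rho$ (boundary accumulation under refinement is precisely the known obstruction to naive multidimensional Lasota--Yorke arguments), so the claim that the collars are ``inherited'' by $\phi^N$ requires proof and is false in general. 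If the coefficient equals $1$, the inequality only gives $V(\mathcal{L}^n\psi)\le V(\psi)+nC\|\psi\|_1$, the uniform variation bound on Ces\`aro averages fails, and the compactness argument collapses. In the application made in this paper the strict inequality is what is actually arranged (see \eqref{pcond}, where $p$ is taken large), so the gap is immaterial for Theorem A; but as a proof of the statement with ``$\le 1$'' your argument is incomplete at exactly this point.
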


Here a $C^2$ \emph{piecewise expanding map} with \emph{bounded distortion} is map $\phi:R\rightarrow R$ satisfying the following conditions:

\vspace{0.2cm}

\begin{itemize}
 \item[(E1)] There is a partition $\left\{R_i \right\}_{i=1}^\infty$ of $R$ such that each $R_i$ is closed domain with piecewise $C^2$ boundary and finite $(n-1)$-dimensional measure;
  
 \item[(E2)] Each $\phi_i = \phi|{R_i}$ is $C^2$ bijection from the interior of $R_i$ onto its image and has $C^2$ extension to the boundary;
 
 \item[(E3)] There is some $0<\sigma< 1$ such  that $\left\| D\phi_i^{-1} \right\| < \sigma$, for every $i \geq 1$.
 
\item[(D)] The map $\phi$ has \emph{bounded distortion} if there is a constant $K>0$ such that for every $i\geq 1$ $$\dfrac{\left\|D(J\circ \phi_i^{-1}) \right\|}{|J\circ \phi_i^{-1}|} < K,$$ where $J$ is the jacobian of $\phi$.

\end{itemize}

\vspace{0.2cm}

Let $N$  be a closed region in $\mathbb{R}^n$ with piecewise $C^2$ boundary $\partial N$ of finite $(n-1)$-dimensional measure.  We say that a neighborhood $U$ of $\partial N$ in $N$ is a \emph{regular collar} for $N$ if there is a $C^1$ unitary vector field $H$ in $\partial N$ and numbers $\beta(N),\rho(N)>0$ such that:

\begin{itemize}
\item[(C1)] $U$ is written as the union of the segments joining $x\in \partial N$	and $x+\rho(S)H(x) \in N$;

\item[(C2)] For every $x\in\partial N$ and $v\in T_x\partial N$, the angle between $H(x)$ and $v$ are bounded away from zero, with $\sin\theta(x) \geq \beta(N)$, where $\theta(x)$ is the angle between $v$ and $H(x)$.
	
\end{itemize}

For the points $x\in\partial N$ where it fails to be smooth, we define $H(x)$ to be the $C^1$ extension of $H$ to the boundary point $x$. Moreover, the tangent spaces at these points will be considered as the union of the tangent spaces at $x$ of each smooth component of $\partial N$ it belongs to.

\begin{prop}
Let $\phi: \mathbb{S}^1\times \mathcal{M} \rightarrow \mathbb{S}^1\times \mathcal{M}$ be the map defined by $\phi|R = \varphi^{h(R)}|R$, for $R\in\mathcal{R}$. Then, for $p\geq 0$ large enough, the map $\phi$ is a $C^2$ piecewise expanding map with bounded distortion. Moreover $\phi(R)$ admits a regular collar, for all $R\in\mathcal{R}$.
\label{prop42}	
\end{prop}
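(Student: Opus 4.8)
The plan is to verify, one at a time, the four conditions (E1)--(E3) and (D) that make $\phi$ a $C^2$ piecewise expanding map with bounded distortion, and then to build a regular collar for each $\phi(R)$. In fact almost everything has already been prepared in Section~3, so the first part is largely a matter of assembling earlier statements, and only the collar requires genuine work.

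For (E1) and (E2) I would only collect what is known. The family $\mathcal{R} = \bigcup_{n\geq p}\mathcal{R}_n$ is a partition (up to a Lebesgue-null set) of $\mathbb{S}^1\times\mathcal{M}$ into rectangles $\omega\times J$, each a closed domain whose boundary is made of four straight segments, hence piecewise $C^2$ of finite $1$-dimensional measure; this is (E1). Corollary~\ref{cor36} asserts that $\phi|_R = \varphi^{h(R)}|_R$ is a diffeomorphism onto its image, and since $\varphi\in C^D$ with $D\geq 2$ this restriction is $C^2$; the only point to add is that it extends $C^2$ to $\partial R$ with invertible derivative, which holds because the bound $r_j(\theta,x)\leq r_j(\sigma,y)+4$ of Lemma~\ref{lemma35} is a closed condition and therefore persists on $\overline{R}$, keeping $\varphi^{h(R)}$ away from the critical set there. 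That gives (E2).

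For (E3) I would compute $\|D\phi_R^{-1}\|$ in the max-entry norm from the lower-triangular matrix $D\varphi^n$ with entries $\partial_\theta g^n$, $\partial_\theta F_n$, $\partial_x F_n$ (notation as in the proof of Proposition~\ref{prop310}): its inverse has entries $(\partial_\theta g^n)^{-1}$, bounded by $(d-\alpha)^{-n}$, then $(\partial_x F_n)^{-1}$, and $-\partial_\theta F_n\,(\partial_\theta g^n\,\partial_x F_n)^{-1}$. Lemma~\ref{lemma37} with $j=0$ gives $|\partial_x F_n| = \prod_{i=0}^{n-1}|\partial_x f(\theta_i,x_i)|\geq e^{(Dc-\varepsilon)n}$ with $Dc-\varepsilon>0$ since $\varepsilon\le c/2$, and Lemma~\ref{lemma39} bounds $|\partial_\theta F_n/\partial_\theta g^n|$ by a constant $C$ independent of $R$; hence $\|D\phi_R^{-1}\|\leq\sigma$ with $\sigma := \max\{(d-\alpha)^{-p},\,(1+C)\,e^{-(Dc-\varepsilon)p}\}$, which is $<1$ once $p$ is large (indeed $\sigma\to 0$ as $p\to\infty$, the freedom later needed to arrange $\sigma(1+1/\beta)\leq 1$ in Theorem~\ref{teo41}). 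Condition (D) is exactly Proposition~\ref{prop310}, whose constant $\tilde C(\alpha)$ does not depend on $R$.

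The one genuinely hands-on step, and the place I expect the most care is needed, is the regular collar. Here I would use that $\omega\in\mathcal{P}_n$ forces $g^n(\omega) = \mathbb{S}^1$, so $\phi(R) = \varphi^n(R)$ is a band that wraps exactly once around the circle in the $\theta$-direction and whose boundary, by Corollary~\ref{cor33}, consists of two vertical segments over $\theta = \theta_0$ together with two admissible curves (graphs of maps with derivative at most $\alpha$); moreover, by Proposition~\ref{prop38}, every vertical fibre of $\phi(R)$, and in particular each of the two vertical boundary segments, has length $\geq\delta_1$. I would then take the unit vector field $H$ on $\partial\phi(R)$ to be the inward $\pm\partial_x$ direction along the admissible curves, the inward $\pm\partial_\theta$ direction along the vertical segments, and a $C^1$ interpolation between these two choices in a small fixed neighbourhood of each of the four corners. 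Because the admissible curves are nearly horizontal, the angle $\theta(x)$ between $H(x)$ and any tangent vector of $\partial\phi(R)$ satisfies $\sin\theta(x)\geq\beta$ for a universal $\beta>0$, which is (C2); and taking $\rho := \delta_1/3$, the segments $t\mapsto x+t\rho\,H(x)$, $0\leq t\leq 1$, $x\in\partial\phi(R)$, sweep out a genuine one-sided neighbourhood of $\partial\phi(R)$ inside $\phi(R)$ without collisions, which is (C1); both $\beta$ and $\rho$ are independent of $R$. The delicate points are realizing the corner interpolation as an honest $C^1$ field without spoiling the transversality estimate, and checking that the collar does not self-intersect; both rest precisely on the two size controls supplied by Corollary~\ref{cor33} and Proposition~\ref{prop38} (admissible curves of slope $\leq\alpha$, and fibres of width $\geq\delta_1$), and the latter is in turn where conditions $(III_n)$ and $(IV_n)$ of the construction are used.
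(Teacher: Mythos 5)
Your proposal is correct and follows essentially the same approach as the paper: (E1)--(E2) from the construction of $\mathcal{R}$ and Corollaries \ref{cor33} and \ref{cor36}, (D) from Proposition \ref{prop310}, (E3) from the triangular form of $D\varphi^n$ together with Lemma \ref{lemma39} and the derivative lower bound $|\partial_x F_n|\geq e^{(Dc-\varepsilon)n}$, and the regular collar from the admissible-curve slope bound $\alpha$ (giving $\beta$) and the large-image Proposition \ref{prop38} (giving $\rho$). Your collar construction is merely more explicit than the paper's about the choice of $H$ near the corners, but it is the same argument.
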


\begin{proof}
For property (E1), we take the partition $\left\{R_i \right\}_{i=0}^{\infty}$ as the set $\mathcal{R}$ constructed in Section 3. By Corollary \ref{cor33}, the boundary of each rectangle $R\in\mathcal{R}$ will have finite measure. Property (E2) follows from Corollary \ref{cor36}, where we have $\phi|\mbox{int}(R) = \varphi^{h(R)}|\mbox{int}(R)$ and extend it to the boundary of $R$. The bounded distortion property follows from Proposition \ref{prop310}.

Suppose $R\in\mathcal{R}_n$. To check (E3), we first observe that 
\begin{equation*}
	\begin{split}
[D\phi(\theta,x)]^{-1} & = \dfrac{1}{J(\theta,x)} \left[ \begin{array}{cc}
	\partial_x F_n(\theta,x) & 0 \\
	-\partial_\theta F_n(\theta,x) & \partial_\theta g^n(\theta)
\end{array} \right]\\ 
 & = \left[ \begin{array}{cc}
[\partial_\theta g^n(\theta,x)]^{-1} & 0 \\
-\partial_\theta F_n(\theta,x)[\partial_\theta g^n(\theta)\partial_xF_n(\theta,x)]^{-1} & [\partial_x F_n(\theta,x)]^{-1}
\end{array} \right].
\end{split}
\end{equation*}

By Lemma \ref{lemma39}, $$\left\| D\phi(\theta,x)^{-1} \right\| \leq \max \left\{|\partial_\theta g^n(\theta)|^{-1}+C|\partial_xF_n(\theta,x)|^{-1}, |\partial_xF_n(\theta,x)|^{-1} \right\}.$$

We have $\left| \partial_\theta g^n(\theta) \right|^{-1} \leq (d-\alpha)^{-n}$ and by Lemmas \ref{lemma23} and \ref{lemma35}, 
$$
|\partial_xF_n(\theta,x)| \geq \exp\left((Dc-\varepsilon)n \right),
$$ 
which implies 
$$
|\partial_xF_n(\theta,x)|^{-1} \leq \exp\left(-(Dc-\varepsilon)n \right).
$$	
Then 
\begin{equation}
\left\| D\phi(\theta,x)^{-1} \right\| \leq (d-\alpha)^{-n}+(1+C)\exp(-(Dc-\varepsilon)n),
\label{eq24}
\end{equation} 
which can be made smaller then $1$ by taking $p$ large enough (recall that $n \geq p$ in our construction). This proves that $\phi$ is a $C^2$ piecewise expanding map.

To prove that $\phi(R)$ admits a regular collar for all $R\in\mathcal{R}$, we first observe that by Corollary \ref{cor33} the boundary of $\phi(R)$ is made up by two vertical lines and and two admissible curves. Since $|X'(\theta)|\leq \alpha$ for any admissible curve and $\alpha$ is small, it follows that the angles at which the vertical lines meet with the admissible curves in the boundary of $\phi(R)$ have angles uniformly bounded away from zero by a constant $\beta(\phi(R))>0$. This takes care of (C2). 

For (C1), Proposition \ref{prop38} implies that the images $\phi(R)$  has large size for any $R\in\mathcal{R}$. Therefore, we can find some uniform constant $\rho(\phi(R))> 0$ such that the union of segments from $x\in \partial \phi(R)$ to $x+\rho(\phi(R))H(x)$ defines a regular collar in $\phi(R)$, for some $C^1$ unitary vector field $H$ defined on $\partial \phi(R)$ pointing inside of $\phi(R)$. 
\end{proof}

To finish the proof of Theorem A, we will establish the values of $p$ such that there exists a $\phi$-invariant measure $\mu$, which will be guaranteed by Theorem \ref{teo41}, and then define a $\varphi$-invariant measure $\mu^{*}$ induced by $\mu$. The proof that $\mu^{*}$ is invariant, absolutely continuous with respect to the Lebesgue measure, finite and ergodic follows exactly as in \cite[Section 6]{Al00}.

In view of estimate (\ref{eq24}) and the requirements of Theorem \ref{teo41}, it suffices to take $p$ large enough such that 
\begin{equation}
\left[ (d-\alpha)^{-p} +\exp(-(Dc-\varepsilon)p) \right]\left(1 + \dfrac{1}{\beta} \right) < 1.
\label{pcond}
\end{equation}

This proves the existence of an absolutely continuous invariant probability measure $\mu$ for $\phi$ by Theorem \ref{teo41}. 

Now consider the sequence $R_1 = \ldots = R_{p-1} = \emptyset$ and $$R_n = \bigcup_{R\in\mathcal{R}_n} R, \ \ \mbox{for}  \ \ n\geq p.$$

We define $$\mu^{*} = \sum_{j=0}^{\infty} \varphi^{j}_{*} \left( \mu\left|\bigcup_{n \geq j} R_n \right. \right),$$ which is a $\varphi$-invariant measure absolutely continuous with respect to the Lebesgue measure.

Finally to extend this construction to any $\varphi\in \mathcal{N}$, we argue in the same way as in \cite[Section 7 ]{Al00} to replace condition (\ref{cond1}) with the existence of a $\varphi$-invariant foliation $\mathcal{F}$ close to vertical lines, which will give a notion of expansion in the direction of the leaves of $\mathcal{F}$ in place of the derivative. The rectangles in the partition $\mathcal{R}$ will now have their boundaries made up of two admissible curves and two segments of leaves in $\mathcal{F}$, instead of two vertical lines. From here, everything follows exactly as before.

\section{Proof of Theorem B}

From  of \cite[Section 5]{Al00} it follows that we can decompose $\mathbb{S}^1\times \mathcal{M}$ into finitely many minimal $\varphi$-invariant subsets $A$ with positive Lebesgue measure such that there exists finitely many SRB measures $\mu_A$ giving full weight to $A$, see \cite[Section 6]{Al00}.
To prove uniqueness, as described in \cite{AV02}, it suffices to prove that $\varphi$ is topologically mixing. Then it follows from the arguments in  \cite[Section 7]{AV02} that any $\varphi\in\mathcal{N}$ is ergodic with respect to the Lebesgue measure, which is the content of Theorem C of that paper. Therefore, the SRB measure defined for each $\varphi \in\mathcal{N}$ is unique.

Let $\varphi\in\mathcal{N}$ and consider the change of coordinates in $\mathbb{S}^1\times \mathcal{M}$ in which the invariant central leaves are represented by vertical lines $\left\{\sigma= \ \mbox{constant}\right\}$. Under this change of coordinates the maps in $\mathcal{N}$ assume the form $$\varphi(\sigma,y) = (g(\sigma),f_{\sigma}(y)),$$ where $g$ is now only continuous, the family of maps $(f_{\sigma})_{\sigma \in \mathbb{S}^1}$ depends continuously on $\sigma$ and each $f_{\sigma}$ is at least $C^2$. Moreover, by continuity, each $f_\sigma$ is $C^2$-close to $h_D$. All the following arguments are based on the maps $\varphi \in \mathcal{N}$ in these coordinates.

The \emph{attractor} $\Lambda$ of a map $\varphi\in \mathcal{N}$ is defined as the intersection of all forward images of $\mathbb{S}^1\times\mathcal{M}$: $$\Lambda = \bigcap_{n\geq 0} \varphi^n(\mathbb{S}^1 \times \mathcal{M}).$$ 

When $D$ is odd, we have $\Lambda = \mathbb{S}^1\times \mathbb{S}^1 = \mathbb{T}^2$. When $D$ is even, we have the following result:

\begin{lema}

When $D$ is even, $\Lambda$ coincides with $\varphi^2(\mathbb{S}^1\times I)$ if the interval $I$ is properly chosen.
\label{lemma51}
\end{lema}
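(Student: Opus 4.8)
The plan is to choose the interval $I$ explicitly so that the forward images $\varphi^n(\mathbb{S}^1\times I)$ form a decreasing sequence of compact sets which is constant from $n=2$ on; since $\Lambda$ is the intersection of this sequence, this yields $\Lambda=\varphi^2(\mathbb{S}^1\times I)$. Recall that $h_D(x)=a_0-x^2$ outside $I'=(-1,1)$ and that $a_0=h_D(\tilde{x})=h_D(0)$ is the maximum value of $h_D$; put $a_1=h_D(a_0)=a_0-a_0^2<0$ and $b^{*}=\tfrac12(1+\sqrt{1+4a_0})$, the positive root of $b^2-b-a_0=0$. We take $I=[-b,b]$ with $b\in(a_0,\sqrt{2a_0})$; this interval of admissible values of $b$ is nonempty because $a_0<2$, and an elementary computation shows $\sqrt{2a_0}<b^{*}$ for every $a_0\in(1,2)$, so that $a_0<b<b^{*}$. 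Since $b>1$ we have $\pm b\notin I'$, hence $h_D(I)=[a_0-b^2,a_0]$, and the inequalities $a_0<b$ and $b<b^{*}$ give $h_D(I)\subset\interior I$ with a definite margin $\rho_0:=\min\{b-a_0,\ a_0+b-b^2\}>0$. Consequently, for $\alpha<\rho_0$ one has $\varphi(\mathbb{S}^1\times I)\subset\interior(\mathbb{S}^1\times I)$, the sets $\varphi^n(\mathbb{S}^1\times I)$ decrease, and $\Lambda=\bigcap_{n\ge0}\varphi^n(\mathbb{S}^1\times I)$. It therefore suffices to prove $\varphi^{n}(\mathbb{S}^1\times I)=\varphi^{2}(\mathbb{S}^1\times I)$ for every $n\ge2$.

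The next step is to describe the fibres. For $n\ge1$ we will show that the fibre of $\varphi^n(\mathbb{S}^1\times I)$ over $\theta$ is a single interval $[\chi_n(\theta),\psi_+(\theta)]$, defined inductively by
\begin{gather*}
\psi_+(\theta)=a_0+\alpha\max\{\sin(2\pi\theta_0):g(\theta_0)=\theta\},\\
\chi_1(\theta)=(a_0-b^2)+\alpha\min\{\sin(2\pi\theta_0):g(\theta_0)=\theta\},\\
\chi_{n+1}(\theta)=\min_{g(\theta')=\theta}\Big(\alpha\sin(2\pi\theta')+\min_{[\chi_n(\theta'),\psi_+(\theta')]}h_D\Big).
\end{gather*}
Indeed, the fibre of $\varphi^{n}(\mathbb{S}^1\times I)$ over $\theta$ is the union over the $d$ preimages $\theta'$ of $\theta$ of $\alpha\sin(2\pi\theta')+h_D([\chi_{n-1}(\theta'),\psi_+(\theta')])$; each of these $d$ intervals has length $>1$, and their positions vary by only $O(\alpha)$, so they pairwise overlap and the union is a single interval, whence the recursion. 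The upper envelope $\psi_+$ does not depend on $n$: every fibre $[\chi_n(\theta'),\psi_+(\theta')]$ contains the critical point $\tilde{x}=0$ in its interior (one checks $\chi_n(\theta')<0<\psi_+(\theta')$) and $h_D$ attains there its maximum value $a_0$, so the maximum of $h_D$ over that fibre equals $a_0$ for all $n\ge1$. Thus only the lower envelope $\chi_n$ can move with $n$.

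It remains to prove, by induction on $n$, that $\chi_n(\theta)\in(-a_0,0)$ for every $\theta$, and that $\chi_n=\chi_2$ for $n\ge2$. For $n=1$ this holds because $\chi_1(\theta)=(a_0-b^2)+O(\alpha)$ and $a_0-b^2\in(-a_0,0)$, exactly since $\sqrt{a_0}<b<\sqrt{2a_0}$ (take $\alpha<\min\{b^2-a_0,\ 2a_0-b^2\}$). For the inductive step, assume $\chi_n(\theta')\in(-a_0,0)$ for all $\theta'$ and fix $\theta'$. As $h_D$ is unimodal with peak at $0\in(\chi_n(\theta'),\psi_+(\theta'))$, its minimum over $[\chi_n(\theta'),\psi_+(\theta')]$ is attained at an endpoint; since $h_D$ is increasing on $[-b,0]$ and $\chi_n(\theta')>-a_0>-1$, we get $h_D(\chi_n(\theta'))>h_D(-a_0)=a_1$, while $d\ge16$ forces $\max\{\sin(2\pi\theta_0):g(\theta_0)=\theta'\}>0$, so $\psi_+(\theta')>a_0>1$ and $h_D(\psi_+(\theta'))=a_0-\psi_+(\theta')^2<a_1$. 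Hence
$$
\min_{[\chi_n(\theta'),\psi_+(\theta')]}h_D=h_D(\psi_+(\theta')),
$$
which does not depend on $n$. Substituting this in the recursion gives $\chi_{n+1}(\theta)=\min_{g(\theta')=\theta}\big(\alpha\sin(2\pi\theta')+h_D(\psi_+(\theta'))\big)$ for every $n\ge1$, so $\chi_{n+1}=\chi_2$; moreover this common value lies in $(-a_0,0)$ because $h_D(\psi_+(\theta'))=a_1+O(\alpha)$ and $a_1\in(-a_0,0)$ with a gap $a_0+a_1=2a_0-a_0^2>0$ independent of $\alpha$. This closes the induction, and therefore $\varphi^n(\mathbb{S}^1\times I)=\{(\theta,y):\chi_2(\theta)\le y\le\psi_+(\theta)\}=\varphi^2(\mathbb{S}^1\times I)$ for all $n\ge2$, whence $\Lambda=\varphi^2(\mathbb{S}^1\times I)$.

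I expect the main obstacle to be the correct choice of $I$, namely the constraint $b<\sqrt{2a_0}$: this is precisely what places the image under $\varphi$ of the lower side $\{x=\pm b\}$ of $\mathbb{S}^1\times I$, which sits at height $\approx a_0-b^2$, inside $(-a_0,0)$, the range on which $h_D$ no longer pushes the lower envelope below $a_1$. If $b$ were taken close to $b^{*}$ instead, one would have $a_0-b^2<-a_0$, the bottom of $\varphi^2(\mathbb{S}^1\times I)$ would lie strictly below that of $\varphi^3(\mathbb{S}^1\times I)$, and the stabilisation would fail (or occur only later). The remaining work is bookkeeping of the $O(\alpha)$ error terms, which is harmless because all the strict inequalities used --- $a_1>-a_0$, $a_0<b<\sqrt{2a_0}<b^{*}$, and $\max_{g(\theta_0)=\theta'}\sin(2\pi\theta_0)>0$ --- hold with margins that are independent of $\alpha$.
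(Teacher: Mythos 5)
Your argument is correct in substance and takes a genuinely more explicit route than the paper. The paper's proof imports from \cite{HMS24} an interval $J$ with $h_D(J)\subset\interior(I)$, enlarges it to an $I$ with $h_D(I)\subset\interior(I)$ and $h_D^2(I)=J$, and then transfers the two-step stabilisation to each fibre map $f_\theta$ by $C^2$-closeness; it is short but leaves the choice of $I$ and the stabilisation mechanism implicit. You instead construct $I=[-b,b]$ with $a_0<b<\sqrt{2a_0}<b^*$ explicitly and track the upper and lower envelopes $\psi_+,\chi_n$ of the fibres, isolating the reason stabilisation happens at time exactly $2$: the upper envelope freezes immediately at $\approx a_0$ because every fibre contains the critical point, and from $n=1$ on the minimum of $h_D$ over each fibre is attained at that frozen upper endpoint, so the lower envelope freezes one step later. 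This buys a self-contained proof, a concrete admissible range for $I$, and an explanation of why the constraint $b<\sqrt{2a_0}$ (absent from the paper) is what forces stabilisation at time $2$ rather than later.

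Two caveats. First, your claim that each of the $d$ intervals in the union has length $>1$ is false for $n\ge 2$ when $a_0$ is close to $2$: their length is $a_0-a_1+O(\alpha)=a_0(2-a_0)+O(\alpha)$, which tends to $0$ as $a_0\to 2$. The conclusion you want (pairwise overlap, hence a single interval) survives, because all $d$ intervals agree up to an $O(\alpha)$ translation while their common length is bounded below independently of $\alpha$; you should state it that way. Second, your envelope formulas use the explicit form $\alpha\sin(2\pi\theta')+h_D(x)$, so as written the proof covers only $\varphi_{\alpha,D}$, whereas the lemma in Section 5 is applied to every $\varphi\in\mathcal{N}$ in the adapted coordinates $(\theta,y)\mapsto(g(\theta),f_\theta(y))$, where $f_\theta$ is merely $C^2$-close to $h_D$. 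Your mechanism is robust --- all the strict inequalities ($a_1>-a_0$, $a_0<b<\sqrt{2a_0}<b^*$, unimodality, the positivity of the maximal fibre offset) hold with $\alpha$-independent margins and persist under $C^0$-small perturbations of $h_D$ with the same critical point --- but this transfer should be said explicitly, as it is the only place where the paper's softer continuity argument does work that yours does not.
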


\begin{proof}
Let $J\subset (-2,2)$ such that $h_D(J)\subset \interior(I)$ (see \cite{HMS24}). Then, by continuity we may take $I$ slightly larger than $J$ such that on the first iterate we have $h_D(I) \subset \interior(I)$ and $h_D^2(I) = J$. Since $f_\theta$ has a critical point at $x=0$ and is $C^2$-close to $h_D$, for all $\theta$, it follows that $f_\theta$ also has a forward invariant interval $\tilde{J}\subset I$ such that $f_\theta(I) \subset \interior(I)$ and $f_\theta^2(I) = \tilde{J}$. So it follows that $$\varphi^2(\left\{\theta\right\}\times I ) = \left\{g^2(\theta)\right\}\times \tilde{J} = J(\theta).$$

Proceeding by induction, it follows that $\varphi^n(\left\{\theta\right\}\times I)$ coincides with $J(g^{n-2}(\theta))$. Now, for any $\theta \in \mathbb{S}^1$ and $n\geq 2$ we have $$\varphi^n(\mathbb{S}^1\times I)\cap \left(\left\{\theta\right\}\times I \right) = \bigcup_{\tau} J(\tau) = \varphi^2(\mathbb{S}^1 \times I)\cap \left(\left\{\theta\right\}\times I\right),$$ where union above is taken over all $\tau \in \mathbb{S}^1$ such that $g^2(\tau) = \theta$. Therefore $\Lambda = \bigcap_{n\geq 0} \varphi^n(\mathbb{S}^1\times I) = \varphi^2(\mathbb{S}^1 \times I)$.

\end{proof}

We say that a map $\varphi\in \mathcal{N}$ is \emph{topologically mixing} if for any open set $A \subset \mathbb{S}^1\times\mathcal{M}$ there exists a positive integer $n = n(A)$, depending on $A$, such that $\varphi^n(A) = \Lambda$. We will show that this is true for any $R$ in the partition $\mathcal{R}$ of $\mathbb{S}^1 \times I$ constructed in Section 3.

\begin{prop}
There is an integer $M = M(\alpha)$ such that $\varphi^{h(R)+M}(R) = \Lambda$,  for any $R\in\mathcal{R}$.  
\label{prop52}
\end{prop}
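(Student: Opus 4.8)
The plan is to show that after the hyperbolic return time $h(R)$, the rectangle $R$ has become a full "horizontal slab" $\varphi^{h(R)}(R)$ whose $\theta$-projection is all of $\mathbb{S}^1$ and whose $x$-extent on each fiber is uniformly large (by Proposition~\ref{prop38}), and that from such a slab a \emph{uniformly bounded} number $M$ of further iterates suffices to spread out and cover the attractor $\Lambda$. First I would record that, by construction, each $R \in \mathcal{R}_n$ has the form $\omega \times J$ with $\omega \in \mathcal{P}_n$, and $\varphi^n(R)$ has, by Corollary~\ref{cor33}, a boundary consisting of two vertical leaves and two admissible curves; moreover $g^n(\omega) = \mathbb{S}^1$ since $\omega \in \mathcal{P}_n$ and $g^n$ maps every element of $\mathcal{P}_n$ onto all of $\mathbb{S}^1$ (this is the Markov property of the partitions $\mathcal{P}_n$). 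Hence $\varphi^{h(R)}(R)$ is a region bounded by two admissible curves projecting onto all of $\mathbb{S}^1$, and by Proposition~\ref{prop38} the fiber $\varphi^{h(R)}(\{\theta\}\times J)$ has length $\geq \delta_1$ for every $\theta$. So it suffices to prove: there is $M = M(\alpha)$ such that for \emph{every} region $S$ of this type (bounded by two admissible curves, full $\theta$-projection, all fibers of length $\geq \delta_1$) one has $\varphi^M(S) = \Lambda$.

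The key step is the fiberwise covering argument. Fix $\theta$ and look at the fiber $S_\theta = \varphi^{h(R)}(\{\theta\}\times J)$, an interval of length $\geq \delta_1$ inside $\mathcal{M}$. Under $\varphi$, this fiber is mapped by $f_{\theta_k}$ (in the Theorem~B coordinates, $\varphi(\sigma,y)=(g(\sigma),f_\sigma(y))$); away from the $\sqrt[D]{\alpha}$-neighbourhood of $\tilde x$ the maps $f_\sigma$ expand by a definite factor $\geq 7/4 - \alpha$, so the length of the image fiber grows geometrically until it either meets the critical neighbourhood or exceeds a fixed fraction of $|\mathcal{M}|$. If it meets the critical neighbourhood, one iterate later (using that $\tilde x$ is pre-periodic and $f_\sigma$ is $C^2$-close to $h_D$, so $f_\sigma^2$ near $\tilde x$ lands near the periodic orbit and thereafter expands, cf.\ Lemma~\ref{lema21'}) the image again starts growing; after a bounded number of additional steps its length exceeds a fixed $\delta_2 = \delta_2(\alpha) > 0$ independent of $\theta$ and of the starting fiber. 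A further uniformly bounded number of steps then makes each fiber cover a full fundamental domain of $f_\sigma$ (equivalently, its forward image equals the whole fiber of $\Lambda$, which is $\mathbb{S}^1$ when $D$ is odd and $\tilde J$ when $D$ is even by Lemma~\ref{lemma51}); since $f_\sigma$ maps its invariant interval onto a fixed subinterval and is "onto after two iterates" in the sense used in Lemma~\ref{lemma51}, surjectivity onto the $\Lambda$-fiber propagates. In the $\theta$-direction there is nothing to do: $g$ is already surjective on each fiber over finitely many steps, so once every fiber of $\varphi^{M'}(S)$ equals the $\Lambda$-fiber, we get $\varphi^{M'}(S) \supset \Lambda$, and the reverse inclusion $\varphi^{M'}(S) \subset \Lambda$ is automatic from $S \subset \mathbb{S}^1\times\mathcal{M}$ and the definition of $\Lambda$ (taking $M' \geq 2$).

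Assembling these, I would take $M$ to be the maximum over the finitely many relevant "phases" (growth to leave the critical neighbourhood, the at-most-two recovery iterates, growth to size $\delta_2$, and the bounded number of iterates from size $\delta_1$, resp.\ $\delta_2$, to full fiber); crucially every bound is uniform in $\theta$ and in the particular $R$ because it depends only on $\delta_1$, on the expansion constants $\sigma, 7/4$, and on $\alpha$ through $N(\alpha)$ and the pre-period of $\tilde x$. Then $\varphi^{h(R)+M}(R) = \varphi^M(\varphi^{h(R)}(R)) = \Lambda$ for every $R \in \mathcal{R}$.

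\textbf{Main obstacle.} The delicate point is controlling a fiber that repeatedly falls into the critical neighbourhood: a priori an interval could keep straddling $\tilde x$ and fail to grow. I expect to resolve this using Proposition~\ref{prop38} (the starting length $\delta_1$ is already much larger than $\sqrt[D]{\alpha}$ for small $\alpha$, so the interval cannot be \emph{contained} in the critical neighbourhood) together with the pre-periodicity of $\tilde x$ and Lemma~\ref{lema21'}(e): after leaving the $\sqrt[D]{\alpha}$-neighbourhood the product of derivatives genuinely expands, so the portion of the interval outside the critical window grows geometrically and, within a bounded number of steps, forces the whole image to size $\geq \delta_2$. Making "bounded number of steps" genuinely uniform — in particular independent of how deep inside the critical neighbourhood the interval reaches — is the technical heart of the argument and will rely on the logarithmic bound $N(\alpha) \leq C_1 \log(1/\alpha)$ from Lemma~\ref{lema21'}(c).
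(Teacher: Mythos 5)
Your overall strategy coincides with the paper's: reduce to fiberwise growth of $\varphi^{h(R)}(\{\theta\}\times J)$, enlarge the fiber in stages, and finish with a topological argument based on the pre-periodicity of $\tilde{x}$. However, the step you single out as the ``main obstacle'' is resolved on a false premise. The lower bound from Proposition~\ref{prop38} is $\delta_1(\alpha)\sim \alpha^{\frac{1}{D-1}\left(1-\frac{2\eta}{D-1}\right)}$, and for every $D\ge 2$ and $\eta\le 1/4$ this exponent is at least $1/D$, so the starting fiber is \emph{shorter} than $\sqrt[D]{\alpha}$, not ``much larger''; the interval can perfectly well be contained deep inside the critical neighbourhood. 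Consequently your claim that a straddling interval recovers ``one iterate later'' fails: an interval of length $\ell$ centred at the order-$D$ critical point is folded to length of order $A(\ell/2)^D$, and recovery requires the full binding period $N(\alpha)\sim\log(1/\alpha)$ together with the quantitative estimate of Lemma~\ref{lema21'}(a). The paper's Step~2 handles this by discarding the part of the image within distance $\tfrac{C_2\delta_1}{4}\alpha^{\cdots}$ of $\tilde{x}$, keeping a subinterval $J_1$ of definite proportion, and showing via estimate \eqref{eq25} that each binding cycle at least doubles the length; iterating this $O(\log(1/\alpha))$ times yields length $\delta_2\alpha^{1/D}$ after $M_1\sim\log^2(1/\alpha)$ iterates. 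None of this mechanism appears in your proposal, and without it the claimed geometric growth is unjustified.

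A second genuine omission is the passage from scale $\alpha^{1/D}$ to a scale $\delta_3$ independent of $\alpha$, which you compress into ``a further uniformly bounded number of steps.'' This is the paper's Step~3 and it is not routine: it uses the shadowing of the periodic orbit $z=h_D^l(\tilde{x})$, the multipliers $\rho_1\le\rho\le\rho_2$ with the condition $\rho_1^{1-\eta/D}>\rho_2$, the recursion $d_i\le\rho_2^{k}d_{i-1}+C\alpha$, and a contradiction argument (Case~1) to force the interval out of the $\gamma_0$-neighbourhood of $z$ within $N_0$ periods. Your final step (covering the $\Lambda$-fiber from a fixed-length interval) is in the right direction, though the paper makes it precise via the non-flatness of $\tilde{x}$, the absence of wandering intervals, and the density of the pre-orbit of the repelling point $z$, rather than the ``onto after two iterates'' property of Lemma~\ref{lemma51}, which by itself only describes the image of the whole interval $I$, not of a subinterval of length $\delta_3$.
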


\begin{proof}
We follow the same arguments as in  \cite[Proposition 6.2]{AV02}.

We split the proof in four steps. First prove that $|\varphi^{h(R)}(R)| \geq C\cdot \alpha^{1-\frac{2\eta}{D-1}}$, for any $R\in\mathcal{R}$. After that, we show that after a finite number of iterates $n$ we can make $|\varphi^n(R\cap (\left\{\theta\right\}\times\mathcal{M}))| \geq C \cdot \alpha^\frac{1}{D}$, where $C$ is a constant. On the third step, we show again that after a finite number iterates $m$, starting from $J = \varphi((R\cap (\left\{\theta\right\}\times\mathcal{M}))$, we can make $|\varphi^m(J)|$ larger than some constant independent of $\alpha$. Finally, on step four, using the fact that $h_D$ is topologically mixing and that $f_\sigma$ is close to $h_D$ we conclude the proof.   

\vspace{0.2cm}

\noindent {\bfseries Step 1:} There is a constant $\delta_1 > 0$ such that for every $R=\omega\times J \in \mathcal{R}$ and $\theta\in\omega$, $$|\varphi^{h(R)}(\left\{\theta\right\}\times J)| \geq \delta_1 \cdot \alpha^{\frac{1}{D-1} \left(1-\frac{2\eta}{D-1}\right)}.$$ 

This is a direct consequence of Proposition \ref{prop38}.

\vspace{0.2cm}

\noindent {\bfseries Step 2:} There is a constant $\delta_2> 0$ and $M_1 = M_1(\alpha)$ such that, given any $\theta\in \mathbb{S}^1$ and $J \subset \mathcal{M}$  with $|J| \geq \delta_1 \cdot \alpha^{\frac{1}{D-1} \left(1-\frac{2\eta}{D-1}\right)}$, there is an $n\leq M_1$ such that $$|\varphi^n(\left\{\theta \right\}\times J)| \geq \delta_2 \cdot \alpha^{\frac{1}{D}}.$$ 

Take $\delta_2 = 1$. Let $R_0\geq 0$ be the first integer for which $f_\theta^{R_0}(J)$ intersects $\left( \tilde{x}-\sqrt[D]{\alpha},\tilde{x}+\sqrt[D]{\alpha} \right)$. By Lemma \ref{lema21'}, the iterates of $|J|$ grows exponentially fast until iterate $R_0$. Since $|J|$ is bounded below by a power of $\alpha$, it follows that $R_0 \leq \tilde{C} \cdot \log \left(1/\alpha\right)$. Now we have two possible scenarios, the first one being that $f_\theta^{R_0}(J)$ is not contained in $\left(\tilde{x}-2\sqrt[D]{\alpha},\tilde{x}+ 2\sqrt[D]{\alpha} \right)$, in which case it follows that $|f_\theta^{R_0}(J)| \geq \sqrt[D]{\alpha}$ and we take $n = R_0$. If, however, $f_\theta^{R_0}(J)\subset \left(\tilde{x}-2\sqrt[D]{\alpha},\tilde{x}+ 2\sqrt[D]{\alpha} \right)$, the by Lemma \ref{lema21'} gives $$|f_\theta^{R_0}(J)| \geq C_2\tau^{R_0} |J| \geq C_2 |J| \geq C_2\delta_1 \alpha^{\frac{1}{D-1} \left(1-\frac{2\eta}{D-1}\right)}.$$

Let $J_1\subset f_\theta^{R_0}(J)$ be such that $$J_1\cap \left(\tilde{x}-\dfrac{C_2\delta_1}{4}\alpha^{\frac{1}{D-1} \left(1-\frac{2\eta}{D-1}\right)},\tilde{x}+ \dfrac{C_2\delta_1}{4}\alpha^{\frac{1}{D-1} \left(1-\frac{2\eta}{D-1}\right)} \right) = \emptyset$$ and $|J_1|\geq \dfrac{C_2}{4}|J|$, and let $\sigma_1 = g^{R_0}(\theta)$.	By Lemma \ref{lema21'}, there exists $N = N(\alpha) \leq K_1 \cdot \log(1/\alpha)$, where $K_1$ is constant, such that 
\begin{equation}
|f_{\sigma_1}^N(J_1)| \geq \dfrac{C_2^{D-1}\delta_1^{D-1}}{4^{D-1}}\alpha^{ 1-\frac{2\eta}{D-1}} \alpha^{-1+\frac{\eta}{D-1}}|J_1| \geq \dfrac{C_2^{D-1}\delta_1^{D}}{4^D} \alpha^{-\frac{\eta}{D-1}} |J|.
\label{eq25}
\end{equation}

Now we take $\alpha$ small enough such that the right side of (\ref{eq25}) is larger than $2|J|$. We can now repeat this process with $\sigma_2 = g^{R_0+N}(\sigma)$ and $J_2 = f_\sigma^{R_0+N}(J)$ in place of $J$. In this way, we construct a sequence of vertical segments $J_0 = J, J_2,\ldots, J_{2l}$; a sequence of points $\sigma_0 = \theta,\sigma_2,\ldots, \sigma_{2l}$ in $\mathbb{S}^1$, and a sequence of integers $R_0, R_2,\ldots, R_{2l}$ such that $$|J_{2j+2}| > 2|J_{2j}| \ \ \ \mbox{and} \ \ \ J_{2j+2}\subset f_{\sigma_{2l}}^{R_{2j}+N}(J_{2j}),$$  for every $0\leq j < l$.  Since the lengths of the intervals $J_{2j}$ doubles at each step, we will eventually reach a situation where $J_{2l+1} = f_{\sigma_{2l}}^{R_{2l}}(J_{2l})$ is not contained in $\left(\tilde{x}-2\sqrt[D]{\alpha},\tilde{x}+ 2\sqrt[D]{\alpha}\right)$, which implies $|J_{2l+1}|\geq \sqrt[D]{\alpha}$. We then take $n = R_0+N+R_2+n+\ldots+N+R_{2l}$. Since $|J_{2j}|$ increases exponentially fast, it follows that  $l\leq K_2 \cdot \log(1/|J|) \leq K_3 \cdot \log(1/\alpha)$, where $K_2$ and $K_3$ are constants. This together with the fact that $R_j$ and $N$ are also bounded by $K_4 \cdot \log(1/\alpha)$, we get $n\leq K_5 \log^2(1/\alpha)$, where $K_4$ and $K_5$ are also constants. So it suffices to take $M_1(\alpha) = K_5\cdot \log^2(1/\alpha)$.  

\vspace{0.2cm}

\noindent {\bfseries Step 3:} There is a constant $\delta_3 >0$ and an integer $M_2 = M_2(\alpha)$ such that, given any $\theta\in \mathbb{S}^1$  and any interval $J\subset \mathcal{M}$ with $|J| \geq \delta_2 \alpha^{\frac{1}{D}}$, there exists $n\leq M_2$ such that $$|\varphi^n(\left\{\theta\right\}\times J)| \geq \delta_3.$$ 

Following the same arguments in Step 2, we obtain the following estimate, which is an analogue of (\ref{eq25}):

$$|f_{\theta}^{R_0+N}(J)| \geq \dfrac{C_2^{D-1}\delta_2^{D-1}}{4^{D-1}}\alpha^{\frac{D-1}{D}}\alpha^{-1 + \frac{\eta}{D-1}} |J_1| \geq \dfrac{C_2^{D-1}\delta_2^D}{4^D} \alpha^{\frac{\eta}{D-1}}.$$

Let $R_1$ be the first integer such that $f_\theta^{R_0+N+R_1}(J)$ intersects  $(\tilde{x}-\sqrt[D]{\alpha},\tilde{x}+\sqrt[D]{\alpha})$. Now fix small constants $0 < \gamma_1 \leq \gamma_0\leq \gamma$, independent of $\alpha$. If $f_\theta^{R_0+N+R_1}(J)\not\subset(\tilde{x}-\gamma_1,\tilde{x}+\gamma_1)$ then $|f_\theta^{R_0+N+R_1}(J)| > \gamma_1-\sqrt[D]{\alpha} > \gamma_1/2$, and the result follows. If, however, $f_\theta^{R_0+N+R_1}(J) \subset (\tilde{x}-\gamma_1,\tilde{x}+\gamma_1)$ then we apply Lemma \ref{lema21'} to obtain $$|f_\theta^{R_0+N+R_1}(J)| \geq C_2 \tau^{R_1}\dfrac{C_2^{D-1}\delta_1^D}{4^D} \alpha^{\frac{\eta}{D-1}} \geq 4C_3\alpha^{\frac{\eta}{D-1}},$$ where $C_3 = \dfrac{C_2^D\delta_1^D}{4^{D+1}}$. Then, there is some connected component $$\tilde{J} \subset f_\theta^{R_0+N+R_1}(J) \setminus \left(\tilde{x}-\sqrt[D]{\alpha},\tilde{x}+ \sqrt[D]{\alpha}\right)$$ whose length is larger than $2C_3\alpha^{\frac{\eta}{D-1}} - \alpha^{\frac{1}{D}} > C_3 \alpha^{\frac{\eta}{D-1}}$, since $\alpha$ is small and $\eta < 1/4$. 

Let $\tilde{\sigma} = g^{R_0+N+R_1}(\theta)$, $l\geq 1$ be the smallest integer such that $z =h_D^l(\tilde{x})$ is a periodic point, $k\geq 1$ be the period of $z$ and $\rho^k = |(h_D^k)'(\tilde{x})|$. From \cite{HMS24}, we know that $\rho > 1$. Fix $\rho_1,\rho_2 > 0$ such that $\rho_1 \leq \rho\leq \rho_2$ and $\rho_1^{1-\frac{\eta}{D}} > \rho_2$, and take $\gamma_0>0$ small enough such that $$\rho_1^k \leq |Df^k_\theta(y)| \leq \rho_2^k, \ \ \ \ \mbox{whenever} \ \ \ \ |y-z|< \gamma_0,$$ for any $\theta \in \mathbb{S}^1$, which is possible since $f_\theta$ is $C^2$-close to $h_D$ for $\alpha$ sufficiently small.    

Since $h_D^j(\tilde{x}) \neq \tilde{x}$, for any $j > 0$, fixing $\gamma_1 >0$ sufficiently small, we can ensure that $f_\theta^j(x)$ remains outside of a fixed neighborhood of $\tilde{x}$, for all $0\leq j \leq l$ and $x \in \tilde{J}$. So, we have $$|Df^l_{\tilde{\sigma}}(y)| \geq \tilde{C} |y-\tilde{x}|^{D-1},$$ for some constant $\tilde{C}>0$ and for all $y \in \tilde{J}$.	 It follows that for some $y\in\tilde{J}$, $$|f^l_{\tilde{\sigma}}(J)| \geq |Df^l_{\tilde{\sigma}}(y)||\tilde{J}| \geq \tilde{C}|y-\tilde{x}|^{D-1} C_3 \alpha^{\frac{\eta}{D-1}} \geq C_0 \alpha^{\frac{D-1}{D}+\frac{\eta}{D-1}},$$ for some constant $C_0>0$.

Given any $x\in\tilde{J}$  and $i\geq 0$, let $d_i = |x_{l+ki} - z|$, where $(\theta_j,x_j) = \varphi^j(\theta,x)$. We take $\gamma_1>0$ small enough such that $$|x- \tilde{x}| \leq \gamma_1 \ \ \Rightarrow \ \ d_0 \leq C|x-\tilde{x}|^D +C\alpha \leq \gamma_0.$$

Now, if $(\theta, x)$ and $i\geq 1$ are such that $|x-\tilde{x}| < \gamma_1$ and $d_1,\ldots, d_{i-1} < \gamma_0$, then by the Mean-value Theorem we have $d_{i} \leq \rho_2^k d_{i-1}+ C\alpha$. So, by induction, 
$$d_i \leq \left(1+\rho_2^k+\ldots + \rho_2^{(i-1)k}\right)C\alpha + \rho_2^{ki}d_0 \leq \rho_2^{ki}\left(C|x-\tilde{x}|^D +C\alpha\right).$$ 

In particular, for $|x - \tilde{x}| \leq \sqrt[D]{\alpha}$ , we have $d_i \leq \rho_2^{ki}C\alpha$. Let $N_0$ be the smallest integer such that $\rho_2^{kN_0} > \gamma_0/2$. So this choice of $N_0$ implies that $d_i \leq \gamma_0/2$, for $i=0,1,\ldots,N_0-1$.

Now, we consider two possible cases:

\noindent \emph{Case 1:} $f_{\tilde{\sigma}}^{l+ki}(\tilde{J}) \subset (z-\gamma_0,z+\gamma_0)$, for every $ i = 1,\ldots,N_0-1$.
	
	\vspace{0.1cm}
	
\noindent	 Recall that $\eta < 1/4$. Then, we have
	 \begin{equation*}
	 \begin{split}
	 |f_{\tilde{\sigma}}^{l+ki}(\tilde{J})| &\geq \rho_1^{ki}|f_{\tilde{\sigma}}^{l}(\tilde{J})| \geq C_0 \rho_2^{(1-\frac{\eta}{D})kN_0} \alpha^{\frac{D-1}{D}+\frac{\eta}{D-1}}
	  \geq K \alpha^{-1+\frac{\eta}{D}}\alpha^{\frac{D-1}{D}+\frac{\eta}{D-1}} \geq K \alpha^{\frac{\eta-1}{D}+\frac{\eta}{D-1}} > 1,
	 \end{split}
	\end{equation*}
	which is absurd, since $\gamma_0$ is small.
	
\noindent	\emph{Case 2:} There is some $1\leq i \leq N_0-1$ such that  $f_{\tilde{\sigma}}^{l+ki}(\tilde{J}) \not\subset (z-\gamma_0,z+\gamma_0)$.

	\vspace{0.1cm}
	
\noindent	In this case, since $d_i\leq \gamma_0/2$, it follows that $$|f_{\tilde{\sigma}}^{l+ki}(\tilde{J})| \geq \gamma_0 -\gamma_0/2 > \gamma_1/2.
$$
Then, we take $\delta_3 = \gamma_1/2$, $n = R_0+N+R_1+l+ki$ and $M_2 = R_0+N+R_1+l+ki+kN_0$. 

\vspace{0.2cm}

\noindent {\bfseries Step 4:} There is an integer $M_3$ such that if $J\subset I$ is an interval with $|J| \geq \delta_3$, then for every $\theta\in \mathbb{S}^1$ we have $$\varphi^{M_3}(\left\{\theta\right\}\times J) =  \left(\left\{g^{M_3}(\theta)\right\}\times I \right)\cap \Lambda.$$  

\vspace{0.1cm}

Since $h_D$ is $C^2$ its critical point $\tilde{x}$ is non-flat, i.e., some higher order derivative of $h_D$ at $\tilde{x}$ is nonzero, then it follows from Theorem A of Chapter IV in \cite{MS93} that $h_D$ has no wandering intervals. In particular, since the critical point of $h_D$ was chosen to be pre-periodic, it follows that the pre-orbit of the repelling fixed point $z$ is dense in $I$. Thus, we can find an integer $n_1 \geq 0$ such that $h_D^{-n_1}(z)$ intersects all intervals with length $\delta_3/2$, which implies that the image of any $J \subset I$ with $|J|\geq \delta_3$ must contain a neighborhood $J_0$ of $z$ whose size depends on $\delta_3$. After a finite number of iterates $n_2\geq 1$, we must have $h_D^{n_2}(J_0) = J$.  

Let $M_3 = n_1+n_2+1$. By continuity we have $$\varphi^{M_3}(\left\{\theta\right\}\times J) = \left\{g^{M_3}(\theta)\right\}\times J(g^{M_3-2}(\theta)) = \left(\left\{g^{M_3}(\theta)\right\}\times I \right)\cap \Lambda,$$ where $J(\theta)$ is the segment described in the proof of Lemma \ref{lemma51}.

\vspace{0.1cm}

Finally, take $M = M_1+M_2+M_3$ and the result follows.  
\end{proof}

Now we prove that each $\varphi\in\mathcal{N}$ is topologically mixing.

Recall that in the definition of the partition $\mathcal{R}$ in Section 3, we start with a fixed positive integer $p$ large enough to satisfy \eqref{pcond}, with the map $h:\mathcal{R}\rightarrow \mathbb{Z}_{+}$ satisfying $h(R) \geq p$, for any $R\in\mathcal{R}$. By Corollary \ref{cor36} and \eqref{eq24} the diameter $$\mbox{diam}(\mathcal{R}) = \sup\left\{\mbox{diam}(R): \ R\in\mathcal{R} \right\}$$
can be made arbitrarily small by increasing $p$. We will now consider the sequence of partitions $(\mathcal{R}_p)_{p\geq p_0}$ of $\mathbb{S}^1\times \mathcal{M}$ and the maps $h_p:\mathcal{R}_p\rightarrow \mathbb{Z}_{+}$ associated with each $\mathcal{R}_p$, where $p_0$ satisfies \eqref{pcond}.

Given any open set $A\subset \mathbb{S}^1\times I$, since $\mbox{diam}(\mathcal{R}_p) \rightarrow 0$ as $p\rightarrow +\infty$, we can find some $\tilde{p} \geq p_0$ such that there is $R\in \mathcal{R}_{\tilde{p}}$ with $R\subset A$. Fix $\tilde{p}$ and take $M$ as in Proposition \ref{prop52}. Then, there is some $n \leq h(R)+M$ such that $\varphi^n(A) = \Lambda$, which implies that $\varphi$ is topologically mixing.


\bigskip

\flushleft

{\bf Ricardo Chical\'e} (ricardo.chicale\@@gmail.com)\\
Departamento de Matem\'{a}tica, IBILCE/UNESP \\
Rua Crist\'{o}v\~{a}o Colombo 2265\\
15054-000 S. J. Rio Preto, SP, Brazil

\bigskip

\flushleft

{\bf Vanderlei Horita} (vanderlei.horita\@@unesp.br)\\
Departamento de Matem\'{a}tica, IBILCE/UNESP \\
Rua Crist\'{o}v\~{a}o Colombo 2265\\
15054-000 S. J. Rio Preto, SP, Brazil

\end{document}